\newtheorem{thm}{Theorem}[section]
\newtheorem{cor}[thm]{Corollary}
\newtheorem{lem}[thm]{Lemma}
\newtheorem{prop}[thm]{Proposition}
\newcommand\numberthis{\addtocounter{equation}{1}\tag{\theequation}}
\theoremstyle{remark}
\newtheorem{rmk}[thm]{Remark}
\newcommand{\R}{\mathbb{R}}
\newcommand{\E}{\mathbb{E}}
\newcommand{\N}{\mathbb{N}}
\newcommand{\PP}{\mathbb{P}}
\newcommand{\ag}{\left\{}
\newcommand{\cg}{\right\}}
\definecolor{myGreen}{rgb}{0.0,0.5,0.0}
\title[Stochastic model reduction]{Stochastic model reduction: convergence \\ and applications to climate equations}
\author[S. Assing]{Sigurd Assing}
  \address{Department of Statistics, The University of Warwick, Coventry CV4 7AL, UK}
  \email{\href{mailto:s.assing@warwick.ac.uk}{s.assing@warwick.ac.uk}}
\author[F. Flandoli]{Franco Flandoli}
  \address{Scuola Normale Superiore, Piazza dei Cavalieri, 7, 56126 Pisa, Italia}
  \email{\href{mailto:franco.flandoli@sns.it}{franco.flandoli@sns.it}}
\author[U. Pappalettera]{Umberto Pappalettera}
  \address{Scuola Normale Superiore, Piazza dei Cavalieri, 7, 56126 Pisa, Italia}
  \email{\href{mailto:umberto.pappalettera@sns.it}{umberto.pappalettera@sns.it}}
\keywords{Stochastic model reduction, Wong-Zakai approximation theorems, Ornstein-Uhlenbeck process}
\date\today
\begin{document}

\begin{abstract}
We study stochastic model reduction for evolution equations in infinite dimensional Hilbert spaces,
and show the convergence to the reduced  equations via abstract results of Wong-Zakai type for stochastic equations 
driven by a scaled Ornstein-Uhlenbeck process.
Both weak and strong convergence are investigated, depending on the presence of quadratic interactions 
between reduced variables and driving noise.
Finally, we are able to apply our results to a class of equations used in climate modeling.
\end{abstract}

\maketitle


\section{Introduction} \label{sec:intro}
In this paper we study stochastic model reduction for a system of nonlinear evolution equations in infinite dimensional Hilbert spaces
which is general enough to cover well-established systems of equations used in climate modeling.
The big advantage of such a procedure is the lower complexity of the reduced equations,
since complexity is still one of the major issues 
when predicting the evolution of systems over time spans which are typical for climate rather than meteorology.

Following \cite{MaTiVE01}, 
we assume that the \emph{climate variables} of the system, i.e.\ those more relevant to climate prediction, evolve on longer times scales than the \emph{unresolved variables}, which can be modelled stochastically and have a typical time scale much shorter than the climate variables.
To be able to close the equation for the climate variables,
the task is to understand the effects of unresolved variables when stretching time to climate-time.
In what follows, we also refer to climate variables as \emph{resolved variables}.

Climate modeling typically starts with equations containing quadratic nonlinearities 
which can describe many features of oceanic and atmospheric dynamics at meteorological time---see \cite{MaWa06,Va06}.
In abstract mathematical terms, such equations would look like
\begin{equation} \label{eq:introZ}
 \frac{dZ_t}{dt} = f_t + A Z_t + B(Z_t,Z_t),
\end{equation} 
where $A:H \to H$ is a linear operator, $B:H\times H \to H$ is a bilinear operator, and $f$ is an external forcing term.
Here, the variable $Z$ taking values in $H$ is supposed to be a complex mix of climate and unresolved variables,
and hence the space $H$ has to be `big enough' to `host' variables of that type.
We therefore choose $H$ to be a separable infinite-dimensional Hilbert space.

Now, there is a variety of procedures to identify climate variables in practice which we will not discuss in this paper.
We rather assume that climate variables have been identified spanning a Hilbert-subspace $H_d\subset H$,
and we further assume that the orthogonal complement $H_\infty,\,H = H_d \oplus H_\infty$, 
gives the space of unresolved variables.
When projecting $Z$ onto $H_d$, $H_\infty$ via the projection maps $\pi_d$, $\pi_\infty$,
equation \eqref{eq:introZ} gives raise to two equations
\begin{equation}
\frac{dX_t}{dt} = f^1_t + \tilde{A}^1_1 X_t + A^1_2 Y_t + \tilde{B}^1_{11}(X_t,X_t) + B^1_{12}(X_t,Y_t) + B^1_{22}(Y_t,Y_t) \label{eq:introX}
\end{equation}
and
\begin{equation}
\frac{dY_t}{dt} = f^2_t + A^2_1 X_t + A^2_2 Y_t + B^2_{11}(X_t,X_t) + B^2_{12}(X_t,Y_t) + B^2_{22}(Y_t,Y_t) \label{eq:introY}
\end{equation}
for the collection of climate variables $X=\pi_d(Z)$ and unresolved variables $Y=\pi_\infty(Z)$, respectively.

The next step,
called stochastic climate modeling, 
consists in replacing the complicated nonlinear self-interaction term in \eqref{eq:introY} by a linear random term.
Such a replacement could be justified by the assumption that
quickly varying fluctuations of small scale unresolved variables are more or less indistinguishable from the 
combined effect of a large number of weakly coupled factors, 
usually leading to Gaussian driving forces via Central Limit Theorem.
But such effects would only become visible at climate time 
and not at meteorological time used in \eqref{eq:introX} \&\ \eqref{eq:introY},
so that we are looking to replace $B^2_{22}(Y_{\varepsilon^{-1}t},Y_{\varepsilon^{-1}t})$
by a linear random term, stretching meteorological time to $\varepsilon^{-1}t$,
using a small parameter $\varepsilon \ll 1$. 

In this work, following \cite{MaTiVE01,PeMa94}, we suppose that
\[
B^2_{22}(Y_{\varepsilon^{-1}t},Y_{\varepsilon^{-1}t}) \mbox{ is replaced by } 
- \mu \varepsilon^{-1} Y_{\varepsilon^{-1}t} + \sigma \dot{W}_t,
\]
where $\mu,\sigma$ are positive constants, 
and $\dot{W}$ is Gaussian noise, white in time, and coloured in space.
This way, the parameter $\varepsilon$ is used to scale time,
but also to adjust for the size of the involved variables when scaling time.

Another assumption made in \cite{MaTiVE01}
is that climate variables at climate time have small forcing and self-interaction, and hence we also suppose that
\[
f^1_{\varepsilon^{-1}t} + \tilde{A}^1_1 X_{\varepsilon^{-1}t} + \tilde{B}^1_{11}(X_{\varepsilon^{-1}t},X_{\varepsilon^{-1}t}) 
\mbox{ is replaced by }
\varepsilon F^1_t + \varepsilon A^1_1 X_{\varepsilon^{-1}t} + \varepsilon B^1_{11}(X_{\varepsilon^{-1}t},X_{\varepsilon^{-1}t}),
\]
avoiding so-called fast forcing and fast waves.

All in all, 
when introducing the notation $X^\varepsilon_t = X_{\varepsilon^{-1}t}$ for climate variables at climate time,
and $Y^\varepsilon_t = \varepsilon^{-1} Y_{\varepsilon^{-1}t}$ for the effect of unresolved variables at climate time,
equations \eqref{eq:introX} \&\ \eqref{eq:introY} translate into
\begin{align}
\frac{dX^\varepsilon_t}{dt} &= F^1_t +  A^1_1 X^\varepsilon_t +  A^1_2 Y^\varepsilon_t + B^1_{11}(X^\varepsilon_t,X^\varepsilon_t) + B^1_{12}(X^\varepsilon_t,Y^\varepsilon_t) + \varepsilon B^1_{22}(Y^\varepsilon_t,Y^\varepsilon_t), \label{eq:introXeps}\\
\frac{dY^\varepsilon_t}{dt} &= \varepsilon^{-2} f^2_{\varepsilon^{-1} t} 
+ \varepsilon^{-2} A^2_1 X^\varepsilon_t 
+ \varepsilon^{-1}A^2_2 Y^\varepsilon_t 
+ \varepsilon^{-2} B^2_{11}(X^\varepsilon_t,X^\varepsilon_t) 
+\varepsilon^{-1} B^2_{12}(X^\varepsilon_t,Y^\varepsilon_t) 
-\mu \varepsilon^{-2}Y^\varepsilon_t + \sigma \varepsilon^{-2} \dot{W}_t . \label{eq:introYeps}
\end{align}

The hope is now that, when $\varepsilon$ tends to zero, climate variables at climate time can be approximated
by a random variable $\bar{X}$ which solves a closed stochastic equation with new coefficients not depending on
unresolved variables any more.
Of course, these new coefficients will be functions of the coefficients of equations \eqref{eq:introXeps} \&\ \eqref{eq:introYeps},
and the process of finding these new coefficients is called {\em stochastic model reduction}.

Stochastic model reduction of finite-dimensional systems similar to \eqref{eq:introXeps},\eqref{eq:introYeps}
were extensively discussed in \cite{MaTiVE01}.
However, one of the key steps, i.e.\ proving the convergence $X^\varepsilon\to\bar{X},\,\varepsilon\downarrow 0$, was kept rather short.
Indeed, the authors first sketch a perturbation method based on a theorem by T.G.\ Kurtz, \cite{Kurtz73}, which is their general method,
and they then briefly describe a so-called {\em direct averaging method} for special cases
based on limits of solutions to stochastic differential equations. In particular the latter method lacks a certain amount of rigour 
because the convergence of the involved stochastic processes is not shown,
and this gap has not been closed in follow-up papers---see \cite{FrMaVa05,FrMa06,JaTiVa15} for example.

In this paper we are not only closing this gap, but also develop a new method of proof.

We at first identify $\bar{X}$, and then study in very detail the convergence 
$X^\varepsilon\to\bar{X},\,\varepsilon\downarrow 0$,
when $X^\varepsilon$ solves an evolution equation of type
\begin{equation} \label{eq:introabsX}
\frac{dX^\varepsilon_t}{dt} = 
F(t,X^\varepsilon_t) + 
\sigma(t,X^\varepsilon_t) Y^\varepsilon_t 
+\varepsilon \beta(Y^\varepsilon_t,Y^\varepsilon_t),
\end{equation}
where $Y^\varepsilon$ is a decoupled infinite-dimensional Ornstein-Uhlenbeck process satisfying
\begin{equation} \label{eq:introabsY}
\frac{dY^\varepsilon_t}{dt} =
-\varepsilon^{-2}Y^\varepsilon_t 
+\varepsilon^{-2} \dot{W}_t.
\end{equation}

Since equation \eqref{eq:introabsX} is more general than \eqref{eq:introXeps}, 
once stochastic model reduction is established for the system \eqref{eq:introabsX},\eqref{eq:introabsY}
with decoupled unresolved variables,
it also follows for an interesting subclass of systems of type \eqref{eq:introXeps},\eqref{eq:introYeps}
with coupled unresolved variables---see \autoref{thm:climate}. 
Part (ii) of this theorem deals with the case of linear scattering, that is $B^1_{22} =0$,
and in this case we achieve showing `strong' convergence in probability:
\begin{equation} \label{eq:introlim}
\lim_{\varepsilon \to 0} \PP\ag \sup_{t \leq T }\|X^\varepsilon_t - \bar{X}_t\|_{H_d} > \delta\cg = 0,
\quad \forall \delta>0,
\end{equation}  
on a given climate time interval $[0,T]$. When the quadratic interaction term $B^1_{22}$ is non-trivial,
we can only show convergence in law, as stated in \autoref{thm:climate}(i).
We refer to \autoref{rmk:conv}(ii)   for an argument which suggests that 
one cannot expect much more than a weak-type convergence in the general case. 
This insight of course sheds new light on the results given in \cite{MaTiVE01} and follow-up papers.

At this point it should be mentioned that thoughout this paper we assume that $H_d$ is finite-dimensional
which seems to be a natural choice when it comes to climate modeling.
However, our arguments are general 
and can be adapted to infinite dimensional subspaces, see \cite{FlPa20+}. 

In the case of the more abstract system \eqref{eq:introabsX},\eqref{eq:introabsY},
the process $Y^\varepsilon$ will eventually behave like white noise, as $\varepsilon \downarrow 0$.
This limiting behaviour is fundamental for finding the limit of equation \eqref{eq:introabsX}
because it opens the door for using arguments similar to those of Wong \&\ Zakai in \cite{WoZa65}.
Of course, Wong \&\ Zakai formulated their results in a finite-dimensional setting.
There have been earlier attempts of proving similar results in infinite dimensions,
we refer to \cite{BrCaFl88,Tw93,TeZa06}, for example.
However, we would like to emphasise that these earlier attempts dealt with
piecewise linear approximations of noise rather than an infinite dimensional Ornstein-Uhlenbeck process.
Note that it is typical for Wong-Zakai results that
stochastic integral terms of limiting equations are interpreted in the sense of Stratonovich.

The paper is structured as follows. 

In \autoref{sec:not}, 
we formulate our main results on the convergence of solutions to \eqref{eq:introabsX},\eqref{eq:introabsY}.
First, the limiting equation for $\bar{X}$ is identified, 
and then conditions for weak convergence $X^\varepsilon\to\bar{X}$ are stated in \autoref{thm:main}(i).
However, when \eqref{eq:introabsX} is a simpler equation, i.e.\ $\beta=0$,
even the stronger convergence \eqref{eq:introlim} can be shown 
under the same conditions---see \autoref{thm:main}(ii).

In \autoref{sec:main},
we give the proof of \autoref{thm:main}(ii). 
The proof relies on preliminary localization and discretization arguments which allow to consider, 
instead of \eqref{eq:introlim}, its discrete version
\begin{equation*} 
\lim_{\varepsilon \to 0} \PP\ag \sup_{k}\|X^\varepsilon_{t_k} - \bar{X}_{t_k}\|_{H_d} > \delta\cg = 0,
\quad \forall \delta>0,
\end{equation*} 
for only finitely many $t_k \in [0,T]$. 

In \autoref{sec:weakWZ}, 
we give the proof of \autoref{thm:main}(i) which, at the beginning,
requires a careful analysis of the quadratic term $\beta(Y^\varepsilon_t,Y^\varepsilon_t)$,
but otherwise is an adaptation of the proof given in the previous section.

In \autoref{sec:clim},
we eventually use the results of \autoref{sec:not} to prove \autoref{thm:climate} 
under quite natural conditions,
thus making the connection to our main applications in climate modeling.
\section{Notation and main Result} \label{sec:not}
%
%
Let $H_d$, $H_\infty$ be real separable Hilbert spaces.
Assume that $H_d$ is finite-dimensional, $\dim H_d = d$, with given orthonormal basis 
$\mathbf{e}_1,\dots,\mathbf{e}_d$,
and that $H_\infty$ is infinite-dimensional with given orthonormal basis
$\mathbf{f}_1,\mathbf{f}_2,\dots$

Given two Banach spaces $U,V$, let $\mathcal{L}(U,V)$ denote the Banach space of continuous linear operators 
mapping $U$ to $V$, endowed with the operator norm.

For each $\varepsilon>0$, consider the pair of stochastic processes $(X^\varepsilon,Y^\varepsilon)$,
taking values in $H_d \times H_\infty$,
where  $X^\varepsilon$ satisfies \eqref{eq:introabsX} over a fixed finite time interval $[0,T]$,
and $Y^\varepsilon$ is given by
\[
Y^\varepsilon_t = \int_{-\infty}^t \varepsilon^{-2}e^{-\varepsilon^{-2}(t-s)} dW_s,\quad t\ge 0,
\]
where $W$ is a Wiener process in $H_\infty$, 
with real-valued time parameter
and
self-adjoint trace class covariance operator $Q \in \mathcal{L}(H_\infty,H_\infty)$.
\begin{rmk}\label{realTimeWipro}\rm
(i)
A Wiener process with real-valued time parameter can be obtained in the following way: 
given two independent Wiener processes $(W^+_t)_{t\ge 0}$ and $(W^-_t)_{t\ge 0}$
defined on filtered probability spaces
 $(\Omega^+,(\mathcal{F}^+_t),\PP^+)$ 
and $(\Omega^-,(\mathcal{F}^-_t),\PP^-)$, respectively,
set $W_t = W^+_t$, for $t \geq 0$, and $W_t = W^-_{-t}$, for $t<0$. 

(ii) 
Using such a representation of $W$, we can also write
\begin{equation*}
Y^\varepsilon_t = 
- \int_0^\infty \varepsilon^{-2}e^{-\varepsilon^{-2}(t+s)} dW^-_s
+ \int_0^t \varepsilon^{-2}e^{-\varepsilon^{-2}(t-s)} dW^+_s,
\quad t\ge 0,
\end{equation*}
which clearly is a \underline{stationary} Ornstein-Uhlenbeck process on 
$(\Omega,\mathcal{F}^-_\infty \otimes \mathcal{F}^+_\infty,\PP)$,
where $\Omega=\Omega^- \times \Omega^+$ and $\PP=\PP^- \otimes \PP^+$, see \cite{DPZa14}. 
Furthermore, setting up the stochastic basis for our processes $(X^\varepsilon,Y^\varepsilon)$,
let $(\Omega,\mathcal{F},\PP)$ be the completion of 
$(\Omega,\mathcal{F}^-_\infty \otimes \mathcal{F}^+_\infty,\PP)$,
and $(\mathcal{F}_t)_{t \geq 0}$ be the augmentation of the filtration
$(\mathcal{F}^-_\infty \otimes \mathcal{F}^+_t)_{t \geq 0}$.
Note that this filtration would satisfy the usual conditions.

(iii)
Since $Q$ is trace class, both $W$ and $Y^\varepsilon$  take values in $H_\infty$.
Without loss of generality, we can assume that $Q$ is diagonal with respect to 
the chosen basis $\{\mathbf{f}_m\}_{m \in \N}$ of $H_\infty$,
that the eigenvalues of $Q$ form a sequence $\{q_m\}_{m \in \N}$ satisfying
$\sum_{m} q_m < \infty$,
and that $\E \left[\langle W_t, \mathbf{f}_m \rangle_{H_\infty} ^2\right] = |t| q_m$, for all $m$.
\end{rmk}

Adopting the useful notation $W^\varepsilon_t = \int_0^t Y^\varepsilon_s ds$, 
we can write \eqref{eq:introabsX} in integral form as
\begin{equation} \label{eq:Xeps'}
X^\varepsilon_t = x_0 
+ \int_0^t  
F(s,X^\varepsilon_s) ds   
+ \int_0^t \sigma(s,X^\varepsilon_s) dW^\varepsilon_s
+ \int_0^t  \varepsilon\beta(Y^\varepsilon_s,Y^\varepsilon_s) ds,
\quad t\in[0,T],
\end{equation}
where $x_0 \in H_d$ is a deterministic initial condition, as well as
$F:[0,T] \times H_d \to H_d$, 
$\sigma:[0,T] \times H_d \to \mathcal{L}(H_\infty,H_d)$,
$\beta:H_{\infty} \times H_{\infty} \to H_d$.
We make the following assumptions on these coefficients:
\begin{itemize}
\item[(\textbf{A1})] $F \in C([0,T] \times H_d , H_d)$, 
and $F(t,\cdot) \in {Lip}_{loc}(H_d,H_d)$, uniformly in $t \in [0,T]$;
\item[(\textbf{A2})] $\sigma \in C^1([0,T] \times H_d,\mathcal{L}(H_\infty,H_d))$, 
and its space-differential $D\sigma(t,\cdot) \in {Lip}_{loc}(H_d,\mathcal{L}(H_d,\mathcal{L}(H_\infty,H_d)))$, 
uniformly in $t \in [0,T]$;
\item[(\textbf{A3})] $\beta:H_{\infty} \times H_{\infty} \to H_d$ is  a continuous bilinear map.
\end{itemize}

Of course, by standard theory (see \cite{DPZa14} for example),
equation \eqref{eq:Xeps'} admits a unique local strong solution, for each $\varepsilon >0$.

Next, we introduce the limiting equation for the wanted limit $\bar{X}$ of the processes $X^\varepsilon$,
when $\varepsilon\downarrow 0$.
First, define the so-called \emph{Stratonovich correction} term $C:[0,T] \times H_d \to H_d$ by
\begin{equation} \label{eq:Strat}
C^i(s,{x}) = 
\langle C(s,{x}) ,   \mathbf{e}_i \rangle_{H_d}
= \frac12 \sum_{m \in \N} q_m \sum_{j=1}^d D_j \sigma^{i,m}(s,{x}) \sigma^{j,m}(s,{x}), 
\quad i=1,\dots,d,
\end{equation}
where
\begin{align*}
\sigma^{i,m}(s,x) = \langle\sigma(s,x)\mathbf{f}_m , \mathbf{e}_i \rangle_{H_d},
\quad  i=1,\dots,d,\;m\in\N,
\end{align*}
is matrix notation for the linear map $\sigma(s,x)\in\mathcal{L}(H_\infty,H_d)$
with respect to our chosen basis vectors;
second, let
\begin{equation}\label{diffCoeff}
b^i_{\ell,m} = \langle\beta(\mathbf{f}_\ell,\mathbf{f}_m) , \mathbf{e}_i \rangle_{H_d}\,
\sqrt{\frac{q_\ell q_m}{2}},
\quad  i=1,\dots,d,\;\ell,m\in\N.
\end{equation}
Then, our limiting equation would read
\begin{equation} \label{eq:X}
\bar{X}_t = x_0 
+ \int_0^t \left(\rule{0pt}{12pt}
F(s,\bar{X}_s) + C(s,\bar{X}_s)
\right) ds 
+ \int_0^t \sigma(s,\bar{X}_s ) d{W}_s
+ \sum_{\ell, m \in \N} b_{\ell,m} \bar{W}^{\ell,m}_t,
\quad t\in[0,T],
\end{equation}
where $W$ is the same Wiener process used to define $Y^\varepsilon$ in \autoref{realTimeWipro},
while $\{\bar{W}^{\ell,m}\}_{\ell,m \in \N}$ is a family of independent 
one-dimensional standard Wiener processes,
which are also independent of $W$.

Again by standard theory, this equation admits a unique local strong solution, too.
However, 
in view of the interpretation of our results with respect to climate modeling,
it is natural to further assume that
\begin{itemize}
\item[(\textbf{A4})] both equations \eqref{eq:Xeps'} and \eqref{eq:X} 
admit global solutions on  $[0,T]$.
\end{itemize}

Another assumption specific to climate modeling,
which has been advocated in \cite{MaTiVE01}, for example, would be that the mean of
$\beta(Y^\varepsilon_s,Y^\varepsilon_s)$ is zero, for any $s$,
with respect to the invariant measure of the corresponding Ornstein-Uhlenbeck process.
Since all $Y^\varepsilon$ are stationary under $\PP$, see \autoref{realTimeWipro}(ii),
this assumption would translate into
\begin{align*}
\E \left[ \langle\beta(Y^\varepsilon_s,Y^\varepsilon_s) , \mathbf{e}_i \rangle_{H_d}
\right] &=
 \sum_{\ell,m \in \N} 
 \langle\beta(\mathbf{f}_\ell,\mathbf{f}_m) , \mathbf{e}_i \rangle_{H_d}\,
 \E \left[  Y^{\varepsilon,\ell}_s Y^{\varepsilon,m}_s \right]  
\,=
\sum_{\ell \in \N} 
 \langle\beta(\mathbf{f}_\ell,\mathbf{f}_\ell) , \mathbf{e}_i \rangle_{H_d}\,
 \frac{\varepsilon^{-2}}{2}  q_\ell
 \,=\,0,
\end{align*}
where $Y_s^{\varepsilon,\ell}$ is short notation
for the coordinates $\langle Y_s^{\varepsilon} , \mathbf{f}_\ell \rangle_{H_\infty}$,
$\ell=1,2,\dots,\,s\in[0,T]$.
As a consequence, we also impose the zero-mean condition
\begin{itemize}
\item[(\textbf{A5})] 
$\sum_{\ell \in \N} 
 \langle\beta(\mathbf{f}_\ell,\mathbf{f}_\ell) , \mathbf{e}_i \rangle_{H_d}\, q_\ell\,=0$,
 for all $i=1,\dots,d$,
\end{itemize}
which is usually true for equations from fluid-dynamics
and can in general be understood as a renormalization procedure for the quadratic term.

The following theorem is the main result of this paper.
\begin{thm} \label{thm:main}
(i)
Assume (A1)-(A5). Then, $X^\varepsilon$ converges to $\bar{X}$, in law, $\varepsilon\downarrow 0$.

(ii)
However, if (A1)-(A4) and (A5) comes via $\beta=0$, then the stronger convergence \eqref{eq:introlim} holds true.
\end{thm}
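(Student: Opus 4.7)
\emph{Strategy.} I would prove part (ii) first (with $\beta=0$) by a Wong--Zakai-type argument adapted to the infinite-dimensional OU driver. In this case \eqref{eq:Xeps'} reduces to
\[
X^\varepsilon_t = x_0 + \int_0^t F(s,X^\varepsilon_s)\,ds + \int_0^t \sigma(s,X^\varepsilon_s)\,Y^\varepsilon_s\,ds,\qquad t\in[0,T],
\]
with $Y^\varepsilon$ a smooth OU approximation of white noise that mixes on the fast scale $\varepsilon^{2}$. The first step is to \emph{localize}: introduce stopping times $\tau^\varepsilon_R=\inf\{t\le T:\|X^\varepsilon_t\|_{H_d}>R\}$ and $\tau_R=\inf\{t\le T:\|\bar{X}_t\|_{H_d}>R\}$, use (A4) to check that $\PP(\tau^\varepsilon_R\wedge\tau_R<T)\to 0$ uniformly in $\varepsilon$ as $R\to\infty$, and reduce everything to the case where $F$, $\sigma$ and $D\sigma$ are bounded and globally Lipschitz (relying on (A1), (A2)). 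The second step is to \emph{discretize}: fix a partition $0=t_0<\cdots<t_N=T$ of mesh $h$, and via a uniform modulus-of-continuity bound for $X^\varepsilon$ and $\bar X$ reduce \eqref{eq:introlim} to the discrete convergence $\max_k\|X^\varepsilon_{t_k}-\bar{X}_{t_k}\|_{H_d}\to 0$ in probability, with the limits taken as $\varepsilon\to 0$ first and $h\to 0$ second.

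\emph{Extracting the Stratonovich correction.} On each interval $[t_k,t_{k+1}]$ I would Taylor-expand $\sigma(s,X^\varepsilon_s)$ around $(t_k,X^\varepsilon_{t_k})$ and substitute the integral equation for $X^\varepsilon_s-X^\varepsilon_{t_k}$, producing the decomposition
\[
\int_{t_k}^{t_{k+1}}\sigma(s,X^\varepsilon_s)\,Y^\varepsilon_s\,ds
=\sigma(t_k,X^\varepsilon_{t_k})(W^\varepsilon_{t_{k+1}}-W^\varepsilon_{t_k})+\mathcal{C}^\varepsilon_k+\mathcal{R}^\varepsilon_k,
\]
where
\[
\mathcal{C}^\varepsilon_k=\int_{t_k}^{t_{k+1}}\!\int_{t_k}^s D\sigma(t_k,X^\varepsilon_{t_k})\bigl[\sigma(t_k,X^\varepsilon_{t_k})Y^\varepsilon_r\bigr]\,Y^\varepsilon_s\,dr\,ds,
\]
and $\mathcal{R}^\varepsilon_k$ collects second-order remainders and $F$-contributions. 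Computing $\E[Y^{\varepsilon,\ell}_r Y^{\varepsilon,m}_s]=\tfrac12\varepsilon^{-2}e^{-\varepsilon^{-2}|r-s|}q_\ell\delta_{\ell,m}$ and integrating shows that $\mathcal{C}^\varepsilon_k$ is close in $L^2$, conditionally on $\mathcal{F}_{t_k}$, to $(t_{k+1}-t_k)\,C(t_k,X^\varepsilon_{t_k})$ with $C$ as in \eqref{eq:Strat}---exactly the Stratonovich correction. Uniform moment bounds on $Y^\varepsilon$ and $W^\varepsilon$ (available after localization) control $\mathcal{R}^\varepsilon_k$. Summing the leading term $\sigma(t_k,X^\varepsilon_{t_k})(W^\varepsilon_{t_{k+1}}-W^\varepsilon_{t_k})$ over $k$ gives a Riemann sum that converges to $\int_0^t\sigma(s,\bar{X}_s)\,dW_s$ because $W^\varepsilon\to W$ uniformly in probability on $[0,T]$.

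\emph{Closure and extension to part (i).} Subtracting the discrete integral representations for $X^\varepsilon$ and $\bar{X}$ and exploiting the global Lipschitz bounds from localization, one arrives at a discrete Gronwall-type inequality
\[
\E\bigl[\max_{j\le k}\|X^\varepsilon_{t_j}-\bar{X}_{t_j}\|_{H_d}^2\bigr]\le C_T\,h\sum_{j<k}\E\bigl[\max_{i\le j}\|X^\varepsilon_{t_i}-\bar{X}_{t_i}\|_{H_d}^2\bigr]+\Xi^\varepsilon_h,
\]
with $\Xi^\varepsilon_h\to 0$ in the order $\varepsilon\to 0$, then $h\to 0$. Applying the discrete Gronwall lemma and reinstating continuous time through the modulus-of-continuity estimate closes part (ii). For part (i), the same scheme goes through provided the extra term $\varepsilon\int_0^t\beta(Y^\varepsilon_s,Y^\varepsilon_s)\,ds$ is handled separately: using (A5) to recenter, a second-moment computation based on the OU covariance combined with a CLT-type argument for the diagonal sums should yield convergence of this term, jointly with the rest, to the independent Wiener series $\sum_{\ell,m}b_{\ell,m}\bar W^{\ell,m}_t$ of \eqref{eq:X}; because this limiting noise is asymptotically independent of $W$ and lives on an enlarged probability space, only convergence in law can be expected. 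The \textbf{main obstacle} is the rigorous identification of the Stratonovich correction in infinite dimensions: one needs to play the mixing time $\varepsilon^{2}$ of $Y^\varepsilon$ against the macroscopic discretization scale $h$, justifying the exchange of the series defining $C$ with the limit via dominated convergence based on (A2) and $\sum_m q_m<\infty$, while keeping the unboundedness of $\sigma$ at bay through the localization.
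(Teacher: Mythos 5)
Your plan for part (ii) is essentially the paper's: localization via stopping times, reduction to a discrete-time statement, an Ikeda--Watanabe-type expansion on each mesh interval, extraction of the Stratonovich correction from the OU covariance $\E[Y^{\varepsilon,\ell}_rY^{\varepsilon,m}_s]=\tfrac12\varepsilon^{-2}q_\ell\delta_{\ell,m}e^{-\varepsilon^{-2}|r-s|}$, and a (discrete) Gronwall closure. Two wrinkles, though. First, your claim that $\mathcal{C}^\varepsilon_k$ is ``close in $L^2$'' to $(t_{k+1}-t_k)C(t_k,X^\varepsilon_{t_k})$ is not true at the order you need: only the \emph{conditional expectation} of $\mathcal{C}^\varepsilon_k$ is within $O(\varepsilon^2)$ of $\Delta\,C$, while the fluctuation $\mathcal{C}^\varepsilon_k-\E[\mathcal{C}^\varepsilon_k\mid\mathcal{F}_{t_k}]$ has $L^2$-size of order $\Delta$, i.e.\ the same order as the correction itself; a triangle-inequality summation over the $T/\Delta$ intervals then gives $O(T)$, not $o(1)$. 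One must exploit that these fluctuations are martingale differences and use orthogonality/Doob's maximal inequality across intervals (this is exactly the role of the martingale $M^i_h$ and Lemma \ref{lem:M} in the paper). Second, you fix the mesh $h$ and send $\varepsilon\to0$ first; the paper instead couples $\Delta=\Delta_\varepsilon$ to $\varepsilon$ (roughly $\varepsilon^2\ll\Delta\ll\varepsilon$, see \autoref{choiceDelta}), and with the kind of crude bounds you invoke (e.g.\ $\sup_{t\le T}|Y^\varepsilon_t|\lesssim\varepsilon^{-1}$ in $L^p$) several remainders scale like powers of $h/\varepsilon$ and blow up in your iterated limit; making ``$\varepsilon\to0$ first'' rigorous would require genuinely sharper per-interval estimates than the ones you sketch.

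The real gap is part (i). You treat $U^\varepsilon_t=\int_0^t\varepsilon\beta(Y^\varepsilon_s,Y^\varepsilon_s)ds$ as an extra term whose CLT-type convergence can be fed ``jointly with the rest'' into the same Gronwall scheme. But $U^\varepsilon$ converges to $\sum_{\ell,m}b_{\ell,m}\bar W^{\ell,m}$ only in law, not in probability (the paper's \autoref{rmk:conv}(ii) shows the martingale part is not even Cauchy in $L^2(\Omega)$), so no pathwise/strong comparison of $X^\varepsilon$ with $\bar X$ is available and the Gronwall argument cannot be run against $\bar X$ directly. The missing device is the paper's two-step comparison: introduce the auxiliary process $\hat X^\varepsilon$ solving the limit-type equation driven by $W$ \emph{and} the actual $U^\varepsilon$ (equation \eqref{auxEqu}); prove the strong estimate $X^\varepsilon_{\cdot\wedge\tau^\varepsilon_R}-\hat X^\varepsilon_{\cdot\wedge\tau^\varepsilon_R}\to0$ in probability by the part (ii) machinery, which requires an additional lemma controlling the mixed term $\int\!\!\int D\sigma\,\varepsilon\beta(Y^\varepsilon,Y^\varepsilon)\,dr\,dW^\varepsilon$ via conditional third moments of the OU process; then prove $\hat X^{\varepsilon,R}\Rightarrow\bar X^R$ by tightness plus a martingale-problem/representation identification of the limit (using the joint convergence $(U^\varepsilon,W)\Rightarrow(\eta,\omega)$ with $\eta\perp\omega$ and pathwise uniqueness for the limit equation); finally remove the localization by a diagonal argument in $R$ and $\varepsilon$. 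None of these steps is present in your sketch, and without them the assertion that the quadratic term's weak limit can be combined ``with the rest'' does not yield convergence in law of $X^\varepsilon$.
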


In what follows, 
to keep notation light in proofs, when no confusion may occur,
the norms in both spaces $H_d$ and $H_\infty$ will be denoted by $|\cdot|$, 
and their scalar products by $\langle \cdot,\cdot\rangle$.
The symbol $\lesssim$ means inequality up to a multiplicative constant,
possibly depending on the parameters of our equations, but not on $\varepsilon$. 
\section{Strong convergence} \label{sec:main}
In this section we give the proof of \autoref{thm:main}(ii), 
which is divided into several steps.

First, by localization, we argue that we can restrict ourselves
to $|X^\varepsilon_t|$, $|\bar{X}_t| \leq R$, for some large $R$,
which is effectively leading to Lipschitz continuity of the coefficients of \eqref{eq:Xeps'}. 

Second, we discretize the problem, which allows us to reduce the proof of \autoref{thm:main}(ii) to its discrete version:
\begin{equation*} 
\lim_{\varepsilon \to 0} \PP\ag \sup_{k}|X^\varepsilon_{t_k} - \bar{X}_{t_k}| > \delta\cg = 0,
\quad \forall \delta>0,
\end{equation*} 
for only finitely many $t_k \in [0,T]$. 
Here, we choose $t_k = k\Delta$, where $\Delta=\Delta_\varepsilon$ is a positive parameter 
whose $\varepsilon$-dependence has to be carefully chosen in the proof---see \autoref{choiceDelta}.

Third, we prove the above discretized version.  
\subsection{Localization} \label{subsec:loc}
Fix $\varepsilon>0,\,\delta\in(0,1)$, and define 
\[
\tau^\varepsilon_R = \inf\{t\geq 0: |X^\varepsilon_t| \geq R+1\} \wedge \inf\{t\geq 0: |\bar{X}_t| \geq R\},
\quad\mbox{for $R>0$},
\]
so that
\begin{align*}
\PP\ag \sup_{t \leq T }|X^\varepsilon_t - \bar{X}_t| > \delta\cg 
&= 
\PP\ag \sup_{t \leq T }|X^\varepsilon_t - \bar{X}_t| > \delta, \, \sup_{t \leq T } |\bar{X}_t| \geq R \cg 
+ \PP\ag \sup_{t \leq T }|X^\varepsilon_t - \bar{X}_t| > \delta, \, \sup_{t \leq T } |\bar{X}_t| < R \cg \\
&= \PP\ag \sup_{t \leq T }|X^\varepsilon_t - \bar{X}_t| > \delta, \, \sup_{t \leq T } |\bar{X}_t| \geq R \cg
+ \PP\ag \sup_{t \leq T \wedge \tau^\varepsilon_R }|X^\varepsilon_t - \bar{X}_t| > \delta, \, \sup_{t \leq T } |\bar{X}_t| < R \cg \\
&\leq \PP\ag \sup_{t \leq T } |\bar{X}_t| \geq R \cg + \PP\ag \sup_{t \leq T \wedge \tau^\varepsilon_R }|X^\varepsilon_t - \bar{X}_t| > \delta\cg. \numberthis \label{goodSummand}
\end{align*}

Therefore, since (A4) implies
\begin{equation*}
\PP \ag \sup_{t \leq T } |\bar{X}_t| \geq R \cg \to  0, \mbox{ as } R \uparrow \infty,
\end{equation*}
to prove \eqref{eq:introlim}, 
it is sufficient to show the convergence of the second summand on the right-hand side of \eqref{goodSummand},
when $\varepsilon\downarrow 0$, for fixed $\delta\in(0,1),\,R>0$.
Furthermore, by Markov inequality,
\begin{equation} \label{eq:markov}
\PP\ag\sup_{t \leq T\wedge \tau^\varepsilon_R }|X^\varepsilon_t - \bar{X}_t| > \delta \cg \leq \delta^{-p}\, \E \left[ \sup_{t \leq T\wedge\tau^\varepsilon_R }|X^\varepsilon_t - \bar{X}_t|^p\right],
\end{equation}
for every $p>0$, $\delta \in (0,1)$,
and hence showing convergence of the above right-hand side, only, is enough.
To keep  notation light, 
we are going to use $\tau^\varepsilon$ instead of  $\tau^\varepsilon_R$,
as $R>0$ will be fixed, in what follows.
\subsection{Discretization}
Fix $\varepsilon>0$.
We show that the expectation on the right-hand side of \eqref{eq:markov} can be replaced by an expectation of the same quantity, but with the supremum taken over a finite number 
(diverging to $\infty$, as $\varepsilon \downarrow 0$) of times $t_k$, see \autoref{cor:discr} below.

To start with, we have the following useful a priori estimate.
\begin{lem} \label{lem:ou}
For any $p>1$, the Ornstein-Uhlenbeck process $Y^\varepsilon$ satisfies
\begin{equation*}
\E \left[ \sup_{t \leq T}\left| Y^\varepsilon_t \right|^p  \right] \lesssim\varepsilon^{-p}.
\end{equation*}
\end{lem}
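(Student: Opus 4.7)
The plan is to exploit the Gaussian stationary structure of $Y^\varepsilon$ together with a time-rescaling that reveals the clean $\varepsilon^{-p}$ scaling. First, I would decompose $Y^\varepsilon_t = \sum_m Y^{\varepsilon,m}_t\mathbf{f}_m$ along the orthonormal basis $\{\mathbf{f}_m\}$ of $H_\infty$; since $Q$ is diagonal in this basis, the coordinates $Y^{\varepsilon,m}$ form a family of independent real-valued stationary OU processes solving
\[
dY^{\varepsilon,m}_t = -\varepsilon^{-2}Y^{\varepsilon,m}_t\,dt + \varepsilon^{-2}\sqrt{q_m}\,dB^m_t,
\]
with invariant variance $q_m\varepsilon^{-2}/2$. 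This already gives the desired $\varepsilon^{-p}$ scaling for $\E|Y^\varepsilon_t|^p$ at any fixed time $t$.

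To upgrade this pointwise bound to a supremum in time, I would introduce the rescaling $\tilde\xi^m_s := \varepsilon\sqrt{2/q_m}\,Y^{\varepsilon,m}_{\varepsilon^2 s}$; a short computation shows that, in law, $(\tilde\xi^m)_m$ is a family of independent, unit-variance, stationary 1D OU processes with rate $1$. Consequently
\[
\sup_{t\leq T}|Y^\varepsilon_t|^2 \leq \frac{\varepsilon^{-2}}{2}\sum_{m\in\N}q_m\,\sup_{s\leq \varepsilon^{-2}T}|\tilde\xi^m_s|^2,
\]
and, using Minkowski's inequality in $L^{p/2}(\Omega)$ together with the trace-class property $\sum_m q_m<\infty$, the task reduces to a bound --- uniform in $m$ --- for $\E\sup_{s\leq L}|\tilde\xi^m_s|^q$ as $L=\varepsilon^{-2}T\to\infty$.

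The main obstacle is precisely this final maximal estimate. A direct Gaussian extreme-value argument would yield only $\E\sup_{s\leq L}|\tilde\xi|^q \lesssim (1+\log L)^{q/2}$, producing a spurious $|\log\varepsilon|^{q/2}$ correction. To recover the clean $\varepsilon^{-p}$ bound claimed, I would circumvent the sup-estimate on the standard OU process and instead work directly from the mild form $Y^\varepsilon_t = \int_{-\infty}^t\varepsilon^{-2}e^{-\varepsilon^{-2}(t-s)}\,dW_s$, applying the factorization method: rewrite $Y^\varepsilon$ as an iterated integral $c_\alpha\int_{-\infty}^t(t-s)^{\alpha-1}e^{-\varepsilon^{-2}(t-s)}Z^\varepsilon_s\,ds$ with a suitable inner process $Z^\varepsilon$, bound $\E|Z^\varepsilon_s|^p$ via BDG, and estimate the outer integral by H\"older. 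For $\alpha\in(0,1/2)$ chosen appropriately, the $\varepsilon^{-2}$ noise prefactor and the $e^{-\varepsilon^{-2}(\cdot)}$ semigroup decay balance exactly, leaving $\varepsilon$-free constants depending only on $p$, $T$, $\alpha$, and $\mathrm{tr}\,Q$.
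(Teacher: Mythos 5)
Your first two steps (coordinatewise reduction to independent one-dimensional stationary Ornstein--Uhlenbeck processes, then lifting to $H_\infty$ via $\sum_m q_m<\infty$) are fine and essentially reproduce the lifting part of the paper's proof, which uses a weighted H\"older argument with weights $q_m^{(p-2)/p}$. The gap is in your final step. After your rescaling you need a bound on $\E\bigl[\sup_{s\le L}|\tilde\xi^m_s|^{q}\bigr]$ that is \emph{uniform} in $L=\varepsilon^{-2}T$, and no argument can deliver that: the expected maximum of a stationary unit-variance OU process over $[0,L]$ grows like $\sqrt{\log L}$ (e.g.\ by Sudakov minoration applied to $\sim L$ essentially decorrelated samples), so the logarithm you dismiss as spurious is genuinely present in the sharp one-dimensional maximal inequality --- indeed it appears in the Jia--Zhao inequality that the paper invokes. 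The factorization method does not circumvent this; it does worse. Writing $Z^\varepsilon_s=\int_{-\infty}^s(s-r)^{-\alpha}e^{-\varepsilon^{-2}(s-r)}\varepsilon^{-2}\,dW_r$, the inner process has stationary variance $\simeq\varepsilon^{-2-4\alpha}$, hence $\E[|Z^\varepsilon_s|^p]\simeq\varepsilon^{-(1+2\alpha)p}$, while the outer H\"older factor (with $p'=p/(p-1)$, $\alpha\in(1/p,1/2)$) contributes $\varepsilon^{2\alpha p-2}$, so that
\[
\E\Bigl[\sup_{t\le T}|Y^{\varepsilon}_t|^p\Bigr]\;\lesssim\;\Bigl(\sup_{t\le T}\int_0^t u^{(\alpha-1)p'}e^{-p'\varepsilon^{-2}u}\,du\Bigr)^{p/p'}\int_0^T\E\bigl[|Z^\varepsilon_s|^p\bigr]\,ds\;\simeq\;\varepsilon^{2\alpha p-2}\cdot T\,\varepsilon^{-(1+2\alpha)p}\;=\;T\,\varepsilon^{-p-2},
\]
independently of $\alpha$. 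The $\varepsilon$'s do not ``balance exactly'': the factorization bound trades the supremum in time for the $L^p(0,T)$-norm of $Z^\varepsilon$, i.e.\ it sums $p$-th moments over $\sim T\varepsilon^{-2}$ correlation cells and therefore pays a polynomial factor $\varepsilon^{-2}$ where a chaining argument pays only $\log^{p/2}(1/\varepsilon)$. Interpolating from a large exponent only improves this to $\varepsilon^{-p-\delta}$, never to $\varepsilon^{-p}$.

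For comparison, the paper does not attempt a self-contained maximal estimate at all: it quotes the one-dimensional result of Jia--Zhao [JiZh20, Theorem 2.2] as a black box for each coordinate $Y^{\varepsilon,m}$ and then performs the trace-class lifting, which is the part of your argument that is correct. If you want a self-contained proof, the honest route is the one you abandoned: a block or chaining estimate over intervals of length $\varepsilon^2$, which yields $\E[\sup_{t\le T}|Y^\varepsilon_t|^p]\lesssim\varepsilon^{-p}\log^{p/2}(1+\varepsilon^{-2}T)$. Such a logarithmic correction is harmless in every subsequent use of the lemma, since the smallness conditions on $\Delta_\varepsilon$ in \autoref{choiceDelta} have polynomial room in $\varepsilon$; but the perfectly clean $\varepsilon^{-p}$ cannot be extracted from the factorization device, and your final step as written rests on a miscalculation.
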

\begin{proof}
First, the result is true in one dimension---see \cite[Theorem 2.2]{JiZh20}. 

In the infinite dimensional case, by H\"older's inequality, we can suppose $p>2$. 
Therefore, since $Q$ is trace class with eigenvalues satisfying $\sum_{m\in\N}q_m<\infty$,
when $\alpha = (p-2)/p$, we obtain that
\begin{align*}
\E \left[ \sup_{t \leq T}\left| Y^\varepsilon_t \right|^p  \right] 
&= 
\E \left[ \sup_{t \leq T} \left( \sum_{m \in \N, q_m>0} q_m^{\alpha} q_m^{-\alpha} \left| Y^{\varepsilon,m}_t \right|^2 \right)^{p/2} \right] \\
&\lesssim 
\left( \sum_{m \in \N, q_m>0}
q_m^{-\alpha p/2} \E \left[ \sup_{t \leq T}\left| Y^{\varepsilon,m}_t \right|^p  \right] \right)
\left( \sum_{m \in \N}
q_m^{\alpha p/(p-2)} \right)^{(p-2)/2}
\lesssim
\varepsilon^{-p},
\end{align*}
having used the one-dimensional result for the coordinates
$Y^{\varepsilon,m}_t = \langle Y^\varepsilon_t , \mathbf{f}_m \rangle,\,m=1,2,\dots$
\end{proof}

Now, we introduce the discretization of the time interval $[0,T]$. 
Let $\Delta>0$, and let  $[T/\Delta]$ be the largest integer less or equal than $T/\Delta$. 
In what follows, $\Delta$ will also depend on $\varepsilon$, in a way to be determined later. 
Also, to make it easier to bound terms by powers of $\varepsilon$ or $\Delta$,
without loss of generality, we will always assume that both $\varepsilon,\Delta$ are less than \underline{one}.

The next two lemmas control the excursion of $X^\varepsilon$ between adjacent nodes
in terms of the ratio $\Delta/\varepsilon$.
\begin{lem}\label{lem:discrXeps}
For any  $p>1$, and any deterministic time $\tau>0$,
\begin{equation*}
\E \left[ \sup_{\substack{k=0,1,\dots,[T/\Delta] \\t \leq \tau ,\, t+k\Delta \leq T\wedge\tau^\varepsilon }} |{X}^\varepsilon_{t+k\Delta} - {X}^\varepsilon_{k\Delta}|^p \right] \lesssim \left( \frac{\tau}{\varepsilon}\right)^p.
\end{equation*}
\end{lem}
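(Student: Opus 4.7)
The plan is to decompose the increment $X^\varepsilon_{t+k\Delta}-X^\varepsilon_{k\Delta}$ using the integral form \eqref{eq:Xeps'} into three pieces
\[
\int_{k\Delta}^{t+k\Delta}F(s,X^\varepsilon_s)\,ds, \qquad \int_{k\Delta}^{t+k\Delta}\sigma(s,X^\varepsilon_s)Y^\varepsilon_s\,ds, \qquad \int_{k\Delta}^{t+k\Delta}\varepsilon\beta(Y^\varepsilon_s,Y^\varepsilon_s)\,ds,
\]
where in the second piece I have rewritten $dW^\varepsilon_s=Y^\varepsilon_s\,ds$, turning the stochastic term into an ordinary Lebesgue integral. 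This is the key simplification made possible by the Ornstein--Uhlenbeck driver: all three terms can be bounded pathwise on the event in question.

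On the event $\{t+k\Delta\le T\wedge\tau^\varepsilon\}$, one has $|X^\varepsilon_s|\le R+1$ on the integration interval, so by (A1)--(A2) both $F(s,X^\varepsilon_s)$ and $\sigma(s,X^\varepsilon_s)$ are bounded by a constant $C_R$ depending only on $R$ (and on $T$). Combined with the constraint $t\le\tau$, this yields the deterministic/pathwise bounds
\[
\left|\int_{k\Delta}^{t+k\Delta}F(s,X^\varepsilon_s)\,ds\right|\lesssim\tau,\qquad \left|\int_{k\Delta}^{t+k\Delta}\sigma(s,X^\varepsilon_s)Y^\varepsilon_s\,ds\right|\lesssim\tau\sup_{s\le T}|Y^\varepsilon_s|,
\]
and, using bilinearity of $\beta$ together with (A3),
\[
\left|\int_{k\Delta}^{t+k\Delta}\varepsilon\beta(Y^\varepsilon_s,Y^\varepsilon_s)\,ds\right|\lesssim\varepsilon\tau\sup_{s\le T}|Y^\varepsilon_s|^{2}.
\]
Crucially, these right-hand sides are independent of $k$ and $t$, so the same bounds apply after taking the supremum over the admissible pairs $(k,t)$.

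Raising to the $p$-th power, taking expectation, and invoking \autoref{lem:ou} with exponents $p$ and $2p$ then gives
\[
\E\!\left[\sup_{k,t}|X^\varepsilon_{t+k\Delta}-X^\varepsilon_{k\Delta}|^{p}\right] \lesssim \tau^{p}+\tau^{p}\,\E\!\left[\sup_{s\le T}|Y^\varepsilon_s|^{p}\right]+\varepsilon^{p}\tau^{p}\,\E\!\left[\sup_{s\le T}|Y^\varepsilon_s|^{2p}\right] \lesssim \tau^{p}+\tau^{p}\varepsilon^{-p}+\varepsilon^{p}\tau^{p}\varepsilon^{-2p},
\]
and since $\varepsilon<1$ all three contributions are $\lesssim(\tau/\varepsilon)^{p}$. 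The only delicate point in the argument is the third piece: the prefactor $\varepsilon$ in front of $\beta(Y^\varepsilon,Y^\varepsilon)$ is exactly what is needed to match the $\varepsilon^{-2p}$ blow-up of $\E[\sup|Y^\varepsilon|^{2p}]$ and produce the same order $\varepsilon^{-p}$ as the linear noise term, so the quadratic interaction is not worse than the linear scattering in this a priori bound. No stochastic calculus (e.g.\ BDG) is required, because $Y^\varepsilon$ is of bounded variation on bounded time intervals.
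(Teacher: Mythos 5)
Your proof is correct and follows essentially the same route as the paper: write the increment in the Lebesgue-integral form using $dW^\varepsilon_s = Y^\varepsilon_s\,ds$, bound $F$ and $\sigma$ pathwise on $[0,\tau^\varepsilon]$ via (A1)--(A2), and invoke \autoref{lem:ou}. The only difference is that you also carry the $\varepsilon\beta(Y^\varepsilon,Y^\varepsilon)$ term, whereas the paper's proof simply notes $\beta=0$ in that section; your extra estimate is exactly the observation the paper later uses in \autoref{sec:weakWZ} to assert the lemma remains valid when $\beta\neq 0$, so this is a harmless (indeed slightly more general) variant.
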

\begin{proof}
Since $\beta=0$, by \eqref{eq:Xeps'},
 the increment ${X}^\varepsilon_{t+k\Delta} - {X}^\varepsilon_{k\Delta}$ can be written as
\begin{align*}
{X}^\varepsilon_{t+k\Delta} - {X}^\varepsilon_{k\Delta} 
= &\int_{k\Delta}^{t+k\Delta} F(s,X^\varepsilon_s) ds + \int_{k\Delta}^{t+k\Delta} \sigma(s,X^\varepsilon_s)dW^\varepsilon_s,
\quad\mbox{for $t+k\Delta\leq T\wedge\tau^\varepsilon$}.
\end{align*}
Therefore, using (A1),(A2), boundedness of $X^\varepsilon$ on $[0,\tau^\varepsilon]$, and \autoref{lem:ou},
we obtain that
\begin{align*}
\E \left[ \sup_{\substack{k=0,1,\dots,[T/\Delta] \\t \leq \tau ,\, t+k\Delta \leq T\wedge\tau^\varepsilon }} |{X}^\varepsilon_{t+k\Delta} - {X}^\varepsilon_{k\Delta}|^p \right]  
&\lesssim
 \tau^p \left( 1 + \E \left[ \sup_{t \leq T\wedge\tau^\varepsilon }\left| Y^\varepsilon_t \right|^p  \right] \right) 
\lesssim
 \left( \frac{\tau}{\varepsilon}\right)^p,
\end{align*}
where $W^\varepsilon_t = \int_0^t Y^\varepsilon_s ds$ was defined in \autoref{sec:not}.
\end{proof}
\begin{lem}\label{lem:discrXepsbis}
For any $p>1$, and any fixed $k \in \{0,1,\dots,[T/\Delta]\}$ such that $k\Delta\le T$, 
\begin{equation*}
\E \left[ |{X}^\varepsilon_{(k+1)\Delta\wedge\tau^\varepsilon} - {X}^\varepsilon_{k\Delta\wedge\tau^\varepsilon}|^p \right] \lesssim 
\Delta^{p/2} + \varepsilon^{p} + \left( \frac{\Delta}{\varepsilon}\right)^{2p}.
\end{equation*}
\end{lem}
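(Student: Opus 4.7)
The plan is to decompose the stopped increment into its drift part and its noise part, and to treat the noise part by ``freezing'' the coefficient $\sigma$ at the left endpoint. Writing $\kappa^\varepsilon_k := k\Delta\wedge\tau^\varepsilon$ for brevity, \eqref{eq:Xeps'} with $\beta=0$ yields
\begin{align*}
X^\varepsilon_{\kappa^\varepsilon_{k+1}} - X^\varepsilon_{\kappa^\varepsilon_k}
&= \int_{\kappa^\varepsilon_k}^{\kappa^\varepsilon_{k+1}} F(s,X^\varepsilon_s)\,ds
+ \sigma\bigl(\kappa^\varepsilon_k, X^\varepsilon_{\kappa^\varepsilon_k}\bigr) \bigl(W^\varepsilon_{\kappa^\varepsilon_{k+1}} - W^\varepsilon_{\kappa^\varepsilon_k}\bigr) \\
&\quad + \int_{\kappa^\varepsilon_k}^{\kappa^\varepsilon_{k+1}} \bigl[ \sigma(s,X^\varepsilon_s) - \sigma\bigl(\kappa^\varepsilon_k,X^\varepsilon_{\kappa^\varepsilon_k}\bigr)\bigr] Y^\varepsilon_s\, ds.
\end{align*}
On $\{t\le\tau^\varepsilon\}$ we have $|X^\varepsilon_t|\le R+1$, so (A1) and (A2) provide uniform bounds on $F$, $\sigma$, and the local Lipschitz constant of $\sigma$ along the stopped trajectory. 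In particular, the drift term has size $\lesssim \Delta$, so its $p$-th moment is $\lesssim \Delta^p \le \Delta^{p/2}$ since $\Delta<1$.

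For the frozen noise term, I would exploit the OU dynamics \eqref{eq:introabsY}, which imply $Y^\varepsilon_s\,ds = -\varepsilon^2\, dY^\varepsilon_s + dW_s$, and hence
\[
W^\varepsilon_{\kappa^\varepsilon_{k+1}} - W^\varepsilon_{\kappa^\varepsilon_k} \;=\; \bigl(W_{\kappa^\varepsilon_{k+1}} - W_{\kappa^\varepsilon_k}\bigr) - \varepsilon^2\bigl(Y^\varepsilon_{\kappa^\varepsilon_{k+1}} - Y^\varepsilon_{\kappa^\varepsilon_k}\bigr).
\]
The stopped Brownian increment rewrites as $\int_{k\Delta}^{(k+1)\Delta}\mathbf{1}_{\{u\le\tau^\varepsilon\}}\,dW_u$, whose $p$-th moment is $\lesssim \Delta^{p/2}$ by the Burkholder--Davis--Gundy inequality (using that $Q$ is trace class), while the boundary OU term is $\lesssim \varepsilon^2\cdot\varepsilon^{-1}=\varepsilon$ in $L^p$ by \autoref{lem:ou}. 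After multiplication by the uniformly bounded operator $\sigma(\kappa^\varepsilon_k,X^\varepsilon_{\kappa^\varepsilon_k})$, this term contributes $\lesssim \Delta^{p/2}+\varepsilon^p$ to the $p$-th moment.

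The correction term is where the bound $(\Delta/\varepsilon)^{2p}$ originates, and this is the main obstacle: one has to balance the roughness of $\sigma$ against the size of $Y^\varepsilon$ carefully, because a naive $|Y^\varepsilon|\lesssim \varepsilon^{-1}$ bound alone would overshoot. Applying Jensen's inequality to pull the $p$-th power inside the time integral, and then Cauchy--Schwarz on the resulting expectation, I obtain
\begin{align*}
& \E\!\left[\left|\int_{\kappa^\varepsilon_k}^{\kappa^\varepsilon_{k+1}} \bigl[\sigma(s,X^\varepsilon_s)-\sigma(\kappa^\varepsilon_k,X^\varepsilon_{\kappa^\varepsilon_k})\bigr] Y^\varepsilon_s\, ds\right|^p\right] \\
&\qquad \lesssim \Delta^{p-1}\int_{k\Delta}^{(k+1)\Delta} \bigl(\E|\sigma(s,X^\varepsilon_s)-\sigma(\kappa^\varepsilon_k,X^\varepsilon_{\kappa^\varepsilon_k})|^{2p}\bigr)^{1/2}\bigl(\E|Y^\varepsilon_s|^{2p}\bigr)^{1/2}\, ds.
\end{align*}
Local Lipschitz continuity of $\sigma$ from (A2), combined with the stopping, gives $|\sigma(s,X^\varepsilon_s)-\sigma(\kappa^\varepsilon_k,X^\varepsilon_{\kappa^\varepsilon_k})|\lesssim |s-\kappa^\varepsilon_k|+|X^\varepsilon_s-X^\varepsilon_{\kappa^\varepsilon_k}|$, and then \autoref{lem:discrXeps} applied with $\tau=\Delta$ yields $(\E|X^\varepsilon_s-X^\varepsilon_{\kappa^\varepsilon_k}|^{2p})^{1/2}\lesssim(\Delta/\varepsilon)^p$. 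Combined with $(\E|Y^\varepsilon_s|^{2p})^{1/2}\lesssim\varepsilon^{-p}$ from \autoref{lem:ou}, the integrand becomes $\lesssim \Delta^p/\varepsilon^{2p}$, and integrating over an interval of length $\Delta$ produces the stated $(\Delta/\varepsilon)^{2p}$ bound. Summing the three contributions completes the proof.
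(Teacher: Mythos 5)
Your proposal is correct and follows essentially the same route as the paper: the same three-term decomposition (drift, frozen-coefficient noise term treated via the identity $W^\varepsilon_{t_2}-W^\varepsilon_{t_1}=W_{t_2}-W_{t_1}-\varepsilon^2(Y^\varepsilon_{t_2}-Y^\varepsilon_{t_1})$, and the correction term controlled by \autoref{lem:ou} and \autoref{lem:discrXeps} through a H\"older-type splitting). Your use of Cauchy--Schwarz with pointwise moments of $Y^\varepsilon$ in place of the paper's H\"older argument with $\sup_t|Y^\varepsilon_t|$ is only a cosmetic variation.
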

\begin{proof}
It suffices to bound every single term on the right-hand side of the equation
\begin{align*}
X^\varepsilon_{(k+1)\Delta\wedge\tau^\varepsilon} - X^\varepsilon_{k\Delta\wedge\tau^\varepsilon} 
= &\int_{k\Delta\wedge\tau^\varepsilon}^{(k+1)\Delta\wedge\tau^\varepsilon} F(s,X^\varepsilon_s) ds 
\\
&+
\int_{k\Delta\wedge\tau^\varepsilon}^{(k+1)\Delta\wedge\tau^\varepsilon} \left( \sigma(s,X^\varepsilon_s) - \sigma(k\Delta\wedge\tau^\varepsilon,X^\varepsilon_{k\Delta\wedge\tau^\varepsilon}) \right) dW^\varepsilon_s  
\\
&+ 
\int_{k\Delta\wedge\tau^\varepsilon}^{(k+1)\Delta\wedge\tau^\varepsilon} \sigma(k\Delta\wedge\tau^\varepsilon,X^\varepsilon_{k\Delta\wedge\tau^\varepsilon}) dW^\varepsilon_s.
\end{align*}
First, by (A1) and boundedness of $X^\varepsilon$ on $[0,\tau^\varepsilon]$,
we have that
\begin{align*}
\E \left[ \left| \int_{k\Delta\wedge\tau^\varepsilon}^{(k+1)\Delta\wedge\tau^\varepsilon} F(s,X^\varepsilon_s) ds \right|^p \right] 
&\lesssim \Delta^p.
\end{align*}
Second, using H\"older's inequality with $q'>1$, (A2), \autoref{lem:ou} and \autoref{lem:discrXeps},
\begin{align*}
\E &\left[ \left| \int_{k\Delta\wedge\tau^\varepsilon}^{(k+1)\Delta\wedge\tau^\varepsilon} \left( \sigma(s,X^\varepsilon_s) - \sigma(k\Delta\wedge\tau^\varepsilon,X^\varepsilon_{k\Delta\wedge\tau^\varepsilon}) \right) dW^\varepsilon_s  \right|^p \right] 
\\
&\lesssim 
\E \left[ \sup_{t \leq T}\left| Y^\varepsilon_t \right|^p \left| \int_{k\Delta\wedge\tau^\varepsilon}^{(k+1)\Delta\wedge\tau^\varepsilon} \left| \sigma(s,X^\varepsilon_s) - \sigma(k\Delta\wedge\tau^\varepsilon,X^\varepsilon_{k\Delta\wedge\tau^\varepsilon}) \right| ds  \right|^p \right] \\
&\lesssim 
\varepsilon^{-p} \,
\left(\rule{0pt}{13pt}\right.
\E \left[ \left| \int_{k\Delta\wedge\tau^\varepsilon}^{(k+1)\Delta\wedge\tau^\varepsilon} \left| \sigma(s,X^\varepsilon_s) - \sigma(k\Delta\wedge\tau^\varepsilon,X^\varepsilon_{k\Delta\wedge\tau^\varepsilon}) \right| ds  \right|^{pq'} \right]
\left.\rule{0pt}{13pt}\right)^{1/q'} \\
&\lesssim 
\varepsilon^{-p} \Delta^{p-1/q'} \left( \int_{k\Delta\wedge\tau^\varepsilon}^{(k+1)\Delta\wedge\tau^\varepsilon} \E \left[ \left| X^\varepsilon_s - X^\varepsilon_{k\Delta\wedge\tau^\varepsilon}  \right|^{pq'} + (s-k\Delta)^{pq'} \right] ds \right)^{1/q'} \\
&\lesssim \left(\frac{\Delta}{\varepsilon}\right)^{2p}.
\end{align*}
Finally, 
\begin{align*}
\E \left[ \left| \int_{k\Delta\wedge\tau^\varepsilon}^{(k+1)\Delta\wedge\tau^\varepsilon} \sigma(k\Delta\wedge\tau^\varepsilon,X^\varepsilon_{k\Delta\wedge\tau^\varepsilon}) dW^\varepsilon_s \right|^p \right] 
&\lesssim 
\E \left[ \left|  W^\varepsilon_{(k+1)\Delta\wedge\tau^\varepsilon} - W^\varepsilon_{k\Delta\wedge\tau^\varepsilon}  \right|^p \right] \lesssim
\Delta^{p/2} + \varepsilon^{p},
\end{align*}
because, for every $t_2 > t_1 \geq 0$,
\begin{align} \label{eq:Weps}
W^\varepsilon_{t_2} - W^\varepsilon_{t_1}=
&\int_{t_1}^{t_2} \left(\int_{-\infty}^s \varepsilon^{-2}e^{-\varepsilon^{-2}(s-r)}dW_r\right)ds \\
= &W_{t_2} - W_{t_1}
- \int_{-\infty}^{t_2} e^{-\varepsilon^{-2}({t_2}-r)}dW_r 
+ \int_{-\infty}^{t_1} e^{-\varepsilon^{-2}({t_1}-r)}dW_r. \nonumber
\end{align}
\end{proof}

The next lemma controls the excursion of the limiting process $\bar{X}$ between adjacent nodes.
\begin{lem}\label{lem:discrXbis}
For any $p>1$, any deterministic time $\tau\in(0,1)$,
and any fixed $k \in \{0,1,\dots,[T/\Delta]\}$,
\begin{equation*}
\E \left[ \sup_{t \leq \tau,\,t+k\Delta \leq T\wedge\tau^\varepsilon}|\bar{X}_{t+k\Delta} - \bar{X}_{k\Delta}|^p \right] \lesssim \tau^{\frac{p}{2}}.
\end{equation*}
\end{lem}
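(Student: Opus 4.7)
The plan is to exploit the simplification coming from the hypothesis of \autoref{thm:main}(ii): since $\beta=0$, all the coefficients $b_{\ell,m}$ defined in \eqref{diffCoeff} vanish, so the last term in \eqref{eq:X} drops and $\bar{X}$ solves the standard It\^o SDE
\begin{equation*}
\bar{X}_t = x_0 + \int_0^t \bigl(F(s,\bar{X}_s)+C(s,\bar{X}_s)\bigr)\,ds + \int_0^t \sigma(s,\bar{X}_s)\,dW_s.
\end{equation*}
The localization $|\bar{X}_s|\le R$ for $s\le\tau^\varepsilon$, together with (A1), (A2) and the explicit form \eqref{eq:Strat} of $C$, will give uniform bounds on both $F+C$ and $\sigma$ on the relevant region. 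Convergence and boundedness of the series defining $C$ on $\{|x|\le R\}$ follows from Cauchy--Schwarz in $m$, using that $Q$ is trace class and that $\sigma,D\sigma$ are continuous on $[0,T]\times\{|x|\le R\}$.

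After rewriting the stopped increment as
\begin{equation*}
\bar{X}_{(t+k\Delta)\wedge T\wedge\tau^\varepsilon}-\bar{X}_{k\Delta\wedge T\wedge\tau^\varepsilon}=I_1(t)+I_2(t),
\end{equation*}
with $I_1$ the drift integral and $I_2$ the stochastic integral, I would bound $I_1$ pointwise using the uniform boundedness of $F+C$, obtaining $\sup_{t\le\tau}|I_1(t)|\lesssim\tau$, and hence
\begin{equation*}
\E\left[\sup_{t\le\tau}|I_1(t)|^p\right]\lesssim\tau^p\le\tau^{p/2},
\end{equation*}
the last inequality using $\tau<1$ and $p>1$.

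For $I_2$, the main tool is the Burkholder--Davis--Gundy inequality for $H_d$-valued stochastic integrals against the $Q$-Wiener process $W$ (see \cite{DPZa14}): denoting by $\|\sigma(s,\bar{X}_s)\|_Q^2=\sum_m q_m|\sigma(s,\bar{X}_s)\mathbf{f}_m|^2$ the squared Hilbert--Schmidt norm with respect to $Q^{1/2}$, one gets
\begin{equation*}
\E\left[\sup_{t\le\tau}|I_2(t)|^p\right]\lesssim \E\left[\Bigl(\int_{k\Delta\wedge T\wedge\tau^\varepsilon}^{(k\Delta+\tau)\wedge T\wedge\tau^\varepsilon}\|\sigma(s,\bar{X}_s)\|_Q^2\,ds\Bigr)^{p/2}\right]\lesssim \tau^{p/2},
\end{equation*}
the final step relying on the uniform bound of $\|\sigma\|_Q$ on the localization region. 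Adding the two contributions gives the claim. The only mild technicality, which is not really an obstacle, is to align the constraint $\{t\le\tau,\,t+k\Delta\le T\wedge\tau^\varepsilon\}$ with the supremum $\sup_{t\le\tau}$; this is handled by stopping the integration endpoints at $T\wedge\tau^\varepsilon$ (so that the resulting supremum dominates the one in the statement), entirely analogously to what is done in \autoref{lem:discrXepsbis} for $X^\varepsilon$.
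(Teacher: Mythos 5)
Your proposal is correct and follows essentially the same route as the paper: since $\beta=0$ kills the $b_{\ell,m}$-terms, the increment splits into a drift integral bounded pathwise by $\tau$ (giving $\tau^p$) and a stochastic integral handled by Burkholder--Davis--Gundy (giving $\tau^{p/2}$), with the localization $|\bar{X}_s|\le R$ before $\tau^\varepsilon$ supplying the uniform bounds on $F+C$ and $\sigma$, and $\tau<1$ absorbing $\tau^p$ into $\tau^{p/2}$. Your extra remarks on the convergence of the series defining $C$ and on stopping the integration endpoints only make explicit details the paper leaves implicit.
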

\begin{proof}
Since $\beta=0$, by \eqref{eq:X}, 
the increment $\bar{X}_{t+k\Delta} - \bar{X}_{k\Delta}$ can be written as
\begin{align*}
\bar{X}_{t+k\Delta} - \bar{X}_{k\Delta} = \int_{k\Delta}^{t+k\Delta} \left( F(s,\bar{X}_s) + C(s,\bar{X}_s)\right) ds + \int_{k\Delta}^{t+k\Delta} \sigma(s,\bar{X}_s ) dW_s,
\quad\mbox{for $t+k\Delta\leq T\wedge\tau^\varepsilon$}.
\end{align*}
Therefore, using (A1),(A2), boundedness of $X^\varepsilon$ on $[0,\tau^\varepsilon]$,
and Burkholder-Davis-Gundy's inequality, we obtain that
\begin{align*}
\E \left[ \sup_{t \leq \tau,\,t+k\Delta \leq T\wedge\tau^\varepsilon}|\bar{X}_{t+k\Delta} - \bar{X}_{k\Delta}|^p \right]
&\lesssim \tau^p + \E \left[ \sup_{t \leq \tau,\,t+k\Delta \leq T\wedge\tau^\varepsilon} \left| \int_{k\Delta}^{t+k\Delta}  
\sigma(s,\bar{X}_s ) dW_s \right|^p  \right] \lesssim \tau^p + \tau^{\frac{p}{2}},
\end{align*}
which proves the lemma since $\tau<1$.
\end{proof}
\begin{cor}\label{lem:discrX}
For any $p>1$,
\begin{equation*}
\E \left[ \sup_{\substack{k=0,1,\dots,[T/\Delta] \\t \leq \Delta ,\, t+k\Delta \leq T\wedge\tau^\varepsilon }} 
|\bar{X}_{t+k\Delta} - \bar{X}_{k\Delta}|^p \right] \lesssim \Delta^{\frac{p}{2}-1}.
\end{equation*}
\end{cor}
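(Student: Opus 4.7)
The plan is to derive the corollary from \autoref{lem:discrXbis} via an elementary union bound over the finite set of discretization nodes $k \in \{0,1,\dots,[T/\Delta]\}$. The key observation is that \autoref{lem:discrXbis} already controls the excursion of $\bar{X}$ \emph{within} each interval $[k\Delta,(k+1)\Delta]$, with a constant that is uniform in $k$ (a careful look at its proof shows that the only ingredients are (A1), (A2), boundedness of $\bar{X}$ on $[0,\tau^\varepsilon]$, and the BDG inequality, none of which sees the shift of the initial time from $0$ to $k\Delta$). Applying that lemma with the deterministic time $\tau = \Delta$, which is legal because we assume $\Delta < 1$, gives for every $k$
\[
\E \left[ \sup_{t \leq \Delta,\, t+k\Delta \leq T\wedge\tau^\varepsilon}|\bar{X}_{t+k\Delta} - \bar{X}_{k\Delta}|^p \right] \lesssim \Delta^{p/2}.
\]

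Next, I would bound the supremum over $k$ by the sum:
\[
\E \left[ \sup_{\substack{k=0,1,\dots,[T/\Delta] \\ t \leq \Delta,\, t+k\Delta \leq T\wedge\tau^\varepsilon}} |\bar{X}_{t+k\Delta} - \bar{X}_{k\Delta}|^p \right]
\leq
\sum_{k=0}^{[T/\Delta]} \E \left[ \sup_{t \leq \Delta,\, t+k\Delta \leq T\wedge\tau^\varepsilon}|\bar{X}_{t+k\Delta} - \bar{X}_{k\Delta}|^p \right].
\]
Since the number of summands is $[T/\Delta]+1 \lesssim \Delta^{-1}$ (with the implicit constant depending on $T$ only, which is permitted by the convention on $\lesssim$), the previous per-$k$ estimate yields
\[
\E \left[ \sup_{\substack{k=0,1,\dots,[T/\Delta] \\ t \leq \Delta,\, t+k\Delta \leq T\wedge\tau^\varepsilon}} |\bar{X}_{t+k\Delta} - \bar{X}_{k\Delta}|^p \right] \lesssim \Delta^{-1} \cdot \Delta^{p/2} = \Delta^{p/2 - 1},
\]
which is the claimed bound.

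There is no substantive obstacle: the only thing to double-check is that the constant in \autoref{lem:discrXbis} does not implicitly depend on $k$, which is immediate from its proof. The slight loss of a factor $\Delta^{-1}$ compared with \autoref{lem:discrXbis} is exactly what one expects when replacing a per-interval bound by a uniform one over $O(\Delta^{-1})$ intervals via a union bound.
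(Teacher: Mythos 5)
Your argument is correct and is exactly the paper's own proof: apply \autoref{lem:discrXbis} with $\tau=\Delta$ (the constant there being uniform in $k$), bound the supremum over $k$ by the sum of the $[T/\Delta]+1\lesssim\Delta^{-1}$ per-interval expectations, and conclude $\Delta^{p/2-1}$. No gaps to report.
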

\begin{proof}
The claim easily follows from \autoref{lem:discrXbis} with $\tau=\Delta$, and the inequality
\begin{equation*}
\E \left[ \sup_{\substack{k=0,1,\dots,[T/\Delta] \\t \leq \Delta ,\, t+k\Delta \leq T\wedge\tau^\varepsilon }} 
|\bar{X}_{t+k\Delta} - \bar{X}_{k\Delta}|^p \right] 
\lesssim 
\sum_{k=0}^{[T/\Delta]} \E \left[ \sup_{t \leq \Delta ,\, t+k\Delta \leq T\wedge\tau^\varepsilon } 
|\bar{X}_{t+k\Delta} - \bar{X}_{k\Delta}|^p \right]. 
\end{equation*}
\end{proof}
\begin{cor} \label{cor:discr}
Let $\Delta=\Delta_\varepsilon>0$ depend on $\varepsilon$ such that
$\Delta/\varepsilon \to 0$, as $\varepsilon \downarrow 0$. Then,
\begin{equation*}
\E \left[ \sup_{t \leq T\wedge\tau^\varepsilon }|X^\varepsilon_t - \bar{X}_t|^2\right]
\lesssim
\E \left[ \sup_{\substack{k=0,1,\dots,[T/\Delta] \\ k\Delta \leq \tau^\varepsilon}} |X^\varepsilon_{k\Delta} - \bar{X}_{k\Delta}|^2\right] + o(1).
\end{equation*}
\end{cor}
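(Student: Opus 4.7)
The plan is to interpolate between any $t \in [0,T\wedge\tau^\varepsilon]$ and the nearest preceding grid node $k(t)\Delta$, where $k(t) = [t/\Delta]$. Writing
\[
X^\varepsilon_t - \bar{X}_t = \bigl(X^\varepsilon_t - X^\varepsilon_{k(t)\Delta}\bigr) + \bigl(X^\varepsilon_{k(t)\Delta} - \bar{X}_{k(t)\Delta}\bigr) + \bigl(\bar{X}_{k(t)\Delta} - \bar{X}_t\bigr),
\]
squaring, taking the supremum over $t \leq T\wedge\tau^\varepsilon$ and then expectations, one bounds $\E\bigl[\sup_{t \leq T\wedge\tau^\varepsilon}|X^\varepsilon_t - \bar{X}_t|^2\bigr]$ by a constant times the sum of three terms: the $X^\varepsilon$-excursion on intervals of length $\Delta$, the discrete supremum appearing on the right-hand side of the claim, and the analogous $\bar{X}$-excursion. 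Note that, since $k(t)\Delta \leq t\leq \tau^\varepsilon$, the grid point $k(t)\Delta$ is automatically within $\{k\Delta: k\Delta\leq \tau^\varepsilon\}$, so the middle term really is the quantity on the right-hand side of the claim.

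First, I would control the $X^\varepsilon$-excursion directly via \autoref{lem:discrXeps}, applied with $\tau = \Delta$ and $p = 2$. This gives a bound of order $(\Delta/\varepsilon)^2$, which is $o(1)$ by the hypothesis $\Delta/\varepsilon \to 0$. Nothing needs to be done for the middle term.

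The main (though mild) obstacle is the $\bar{X}$-excursion. Applying \autoref{lem:discrX} directly with $p = 2$ only yields the critical rate $\Delta^{p/2 - 1}|_{p=2} = 1$, i.e.\ an $O(1)$ bound which is insufficient. The remedy is to pass to a higher moment via H\"older's inequality: for some fixed $p > 2$ (e.g.\ $p = 4$),
\[
\E\Bigl[\sup_{\substack{k=0,\dots,[T/\Delta]\\ s\leq\Delta,\, s+k\Delta \leq T\wedge\tau^\varepsilon}}|\bar{X}_{s+k\Delta} - \bar{X}_{k\Delta}|^2\Bigr]
\,\leq\,
\Bigl(\E\Bigl[\sup_{\substack{k=0,\dots,[T/\Delta]\\ s\leq\Delta,\, s+k\Delta \leq T\wedge\tau^\varepsilon}}|\bar{X}_{s+k\Delta} - \bar{X}_{k\Delta}|^p\Bigr]\Bigr)^{2/p}
\,\lesssim\,
\Delta^{1 - 2/p},
\]
which is $o(1)$ since $\Delta \downarrow 0$ (this follows from $\Delta/\varepsilon \to 0$ together with the standing assumption $\varepsilon<1$). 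Combining the three bounds yields the asserted estimate.
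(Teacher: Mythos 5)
Your proposal is correct and follows essentially the same route as the paper: the identical three-term decomposition around the grid node $[t/\Delta]\Delta$, \autoref{lem:discrXeps} with $\tau=\Delta$ for the $X^\varepsilon$-excursion, and H\"older's inequality applied to \autoref{lem:discrX} at a higher moment (the paper's exponent $2q$ playing the role of your $p>2$) to get an $o(1)$ bound of order $\Delta^{1-1/q}=\Delta^{1-2/p}$ for the $\bar{X}$-excursion. Your explicit remark that $k(t)\Delta\leq\tau^\varepsilon$ is a detail the paper leaves implicit, but there is no substantive difference.
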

\begin{proof}
First, by H\"older's inequality with $q>1$ and \autoref{lem:discrX}, we have that
\begin{equation*}
\E \left[ \sup_{\substack{k=0,1,\dots,[T/\Delta] \\t \leq \Delta ,\, t+k\Delta \leq T\wedge\tau^\varepsilon }} 
|\bar{X}_{t+k\Delta} - \bar{X}_{k\Delta}|^2 \right] 
\lesssim 
\left(\rule{0pt}{13pt}\right.
\E \left[ \sup_{\substack{k=0,1,\dots,[T/\Delta] \\t \leq \Delta ,\, t+k\Delta \leq T\wedge\tau^\varepsilon }} 
|\bar{X}_{t+k\Delta} - \bar{X}_{k\Delta}|^{2q} \right]
\left.\rule{0pt}{13pt}\right)^{1/q} 
\lesssim \Delta^{1-1/q}.
\end{equation*}
Thus, the proof can easily be completed by combining the above and \autoref{lem:discrXeps},
while taking into account
\[
X^\varepsilon_t - \bar{X}_t
\,=\,
(X^\varepsilon_t - X^\varepsilon_{[t/\Delta]\Delta})
+(X^\varepsilon_{[t/\Delta]\Delta}-\bar{X}_{[t/\Delta]\Delta})
+(\bar{X}_{[t/\Delta]\Delta}-\bar{X}_t),
\]
where $[t/\Delta]$ is again our notation for the floor of $t/\Delta$.
\end{proof}
\subsection{Proof of the discretized version}
By \eqref{eq:markov} and \autoref{cor:discr}, it suffices to prove
\begin{equation} \label{eq:Esup2}
\E \left[ \sup_{\substack{k=0,\dots,[T/\Delta] \\  k\Delta \leq \tau^\varepsilon}} \left| X^\varepsilon_{k\Delta}-\bar{X}_{k\Delta}\right|^2\right] \to 0,
\quad\varepsilon\downarrow 0,
\end{equation}
for some $\Delta = \Delta_\varepsilon = o(\varepsilon)$. 
The proof is inspired by \cite[Section VI.7]{IkWa14}.

To start with,
by \eqref{eq:Xeps'} without $\beta$-term,  (A2), and \eqref{eq:Weps}, we have that
\begin{align*}
X^\varepsilon_{(k+1)\Delta} \numberthis \label{eq:incXeps}
= \,&X^\varepsilon_{k\Delta} 
+ \int_{k\Delta}^{(k+1)\Delta} F(s,X^\varepsilon_s) ds 
+ \int_{k\Delta}^{(k+1)\Delta} \sigma(s,X^\varepsilon_s) dW^\varepsilon_s \\
= \,&X^\varepsilon_{k\Delta} 
+ \int_{k\Delta}^{(k+1)\Delta} \left(F(s,X^\varepsilon_s)- F(k\Delta,X^\varepsilon_{k\Delta}) \right) ds \\
&+ \int_{k\Delta}^{(k+1)\Delta} F(k\Delta,X^\varepsilon_{k\Delta}) ds \\
&+ \int_{k\Delta}^{(k+1)\Delta} \left( \sigma(s,X^\varepsilon_s) -\sigma(k\Delta,X^\varepsilon_{k\Delta}) \right) dW^\varepsilon_s 
+ \int_{k\Delta}^{(k+1)\Delta}  \sigma(k\Delta,X^\varepsilon_{k\Delta}) dW^\varepsilon_s \\
= \,&X^\varepsilon_{k\Delta} 
+ \int_{k\Delta}^{(k+1)\Delta} \left(F(s,X^\varepsilon_s)- F(k\Delta,X^\varepsilon_{k\Delta}) \right) ds \\
&+ \int_{k\Delta}^{(k+1)\Delta} F(k\Delta,X^\varepsilon_{k\Delta}) ds \\
&+ \int_{k\Delta}^{(k+1)\Delta} \left( \int_{k\Delta}^s \left( \partial_r \sigma(r,X^\varepsilon_r) + D\sigma(r,X^\varepsilon_r)F(r,X^\varepsilon_r) \right) dr \right) dW^\varepsilon_s \\
&+ \int_{k\Delta}^{(k+1)\Delta} \left( \int_{k\Delta}^s \left( D\sigma(r,X^\varepsilon_r)\sigma(r,X^\varepsilon_r) - D\sigma(k\Delta,X^\varepsilon_{k\Delta})\sigma(k\Delta,X^\varepsilon_{k\Delta})\right) dW^\varepsilon_r \right) dW^\varepsilon_s \\
&+ \int_{k\Delta}^{(k+1)\Delta} \left( \int_{k\Delta}^s \left(  D\sigma(k\Delta,X^\varepsilon_{k\Delta})\sigma(k\Delta,X^\varepsilon_{k\Delta}) - D\sigma(k\Delta,\bar{X}_{k\Delta})\sigma(k\Delta,\bar{X}_{k\Delta}) \right) dW^\varepsilon_r \right) dW^\varepsilon_s \\
&+ \int_{k\Delta}^{(k+1)\Delta} \left( \int_{k\Delta}^s  D\sigma(k\Delta,\bar{X}_{k\Delta})\sigma(k\Delta,\bar{X}_{k\Delta})  dW^\varepsilon_r \right) dW^\varepsilon_s \\
&+ \int_{k\Delta}^{(k+1)\Delta}  \sigma(k\Delta,X^\varepsilon_{k\Delta}) dW_s \\
&+ \sigma(k\Delta,X^\varepsilon_{k\Delta}) \varepsilon^2 \left( Y^\varepsilon_{k\Delta} - Y^\varepsilon_{(k+1)\Delta} \right) \\
= \,&X^\varepsilon_{k\Delta} + I^k_1 + I^k_2 + I^k_3 + I^k_4 + I^k_5 + I^k_6 + I^k_7 + I^k_8,
\end{align*}
for any $k=0,\dots,[T/\Delta]$ such that $(k+1)\Delta\le T$.

Similarly, using \eqref{eq:X} instead of \eqref{eq:Xeps'},
the process $\bar{X}$ satisfies
\begin{align*}
\bar{X}_{(k+1)\Delta} \numberthis \label{eq:incX}
= \,&\bar{X}_{k\Delta} 
+ \int_{k\Delta}^{(k+1)\Delta} \left(F(s,\bar{X}_s)- F(k\Delta,\bar{X}_{k\Delta})\right) ds \\
&+ \int_{k\Delta}^{(k+1)\Delta} F(k\Delta,\bar{X}_{k\Delta}) ds \\
&+ \int_{k\Delta}^{(k+1)\Delta} \left( C(s,\bar{X}_s) - C(k\Delta,\bar{X}_{k\Delta})\right) ds \\
&+ \int_{k\Delta}^{(k+1)\Delta} C(k\Delta,\bar{X}_{k\Delta}) ds \\ 
&+ \int_{k\Delta}^{(k+1)\Delta} \left( \sigma(s,\bar{X}_s) - \sigma(k\Delta,\bar{X}_{k\Delta}) \right) dW_s + \int_{k\Delta}^{(k+1)\Delta}  \sigma(k\Delta,\bar{X}_{k\Delta}) dW_s \\
= \,&\bar{X}_{k\Delta} + J^k_1 + J^k_2 + J^k_3 + J^k_4 + J^k_5 + J^k_6. 
\end{align*}

Having in mind to apply Gronwall's lemma, it turns out to be useful to summarise the contributions of
the right-hand sides of \eqref{eq:incXeps}, \eqref{eq:incX} as follows:
\begin{align*}
X^\varepsilon_{h\Delta} - \bar{X}_{h\Delta}    \numberthis\label{contribute}
= &\sum_{k=0}^{h-1} \left( I^k_2 - J^k_2 \right)
 + \sum_{k=0}^{h-1} \left( I^k_6 - J^k_4 \right) 
 + \sum_{k=0}^{h-1} \left( I^k_7 - J^k_6 \right) 
 + \sum_{k=0}^{h-1} I^k_5 \\
&+ \sum_{k=0}^{h-1} \left( I^k_1 + I^k_3 + I^k_4 + I^k_8 - J^k_1 - J^k_3 - J^k_5 \right), \nonumber
\end{align*}
for any $h=1,\dots,[T/\Delta]$,
which splits the difference $X^\varepsilon_{h\Delta} - \bar{X}_{h\Delta}$ into 5 sums.

We at first prove that the 2nd and the 5th sum can be neglected when proving \eqref{eq:Esup2}. 
The summands of the 5th sum are discussed in \autoref{lem:aux} below.
The contribution of the 2nd sum though is more delicate and requires a martingale argument 
similar to that of \cite[Theorem VI.7.1]{IkWa14}.

The remaining sums will be controlled in terms of the difference $X^\varepsilon - \bar{X}$ itself,
which allows them to be estimated via Gronwall's lemma.

Of course, under assumption (A1), the function $F$ is uniformly continuous when restricted to  $[0,T] \times B_R(0)$, where $B_R(0)$ is the closed ball of radius $R$ in $H_d$. 
In what follows, we will denote by $\omega_F:[0,T] \to [0,\infty)$ the (local) modulus of continuity of $F(\cdot,x)$:
\begin{equation*}
\left| F(t,x) - F(s,x) \right| \leq \omega_F(|t-s|), \quad \mbox{ for every } t,s \in [0,T], \mbox{ and } x \in B_R(0).
\end{equation*}

Obviously, the function $\omega_F$ vanishes at zero, and without loss of generality, it can be chosen to be both non-decreasing and continuous. 

Denote by $\omega_\sigma$ the corresponding modulus of continuity of the derivative $D\sigma(\cdot,x)$, 
and let $\omega_{F,\sigma} = \omega_F + \omega_\sigma$.

\begin{lem} \label{lem:aux}
For any $p>1${\rm :}
\begin{align*}
\E \left[ \sup_{\substack{h=1,\dots,[T/\Delta]\\h\Delta \leq \tau^\varepsilon}} \left| 
\sum_{k=0}^{h-1} I^k_1 \right|^p + \left| 
\sum_{k=0}^{h-1} I^k_3 \right|^p\right] &\lesssim \left( \frac{\Delta}{\varepsilon} \right)^p + \omega_F(\Delta)^p;\\
\E \left[ \sup_{\substack{h=1,\dots,[T/\Delta]\\h\Delta \leq \tau^\varepsilon}} \left| 
\sum_{k=0}^{h-1} I^k_4 \right|^p\right] &\lesssim \left( \frac{\Delta^2}{\varepsilon^3} \right)^p + \left( \frac{\Delta}{\varepsilon^2}\right)^p \omega_\sigma(\Delta)^p;\\
\E \left[ \sup_{\substack{h=1,\dots,[T/\Delta]\\h\Delta \leq \tau^\varepsilon}} \left| 
\sum_{k=0}^{h-1} I^k_8 \right|^p\right] &\lesssim \left( \frac{\varepsilon^2}{\Delta}\right)^{p/2} +\left( \frac{\varepsilon^2}{\Delta}\right)^{p} + \left( \frac{\Delta}{\varepsilon}\right)^{p};\\
\E \left[ \sup_{\substack{h=1,\dots,[T/\Delta]\\h\Delta \leq \tau^\varepsilon}} \left| 
\sum_{k=0}^{h-1} J^k_1 \right|^p + \left| 
\sum_{k=0}^{h-1} J^k_3 \right|^p + \left| 
\sum_{k=0}^{h-1} J^k_5 \right|^p\right] &\lesssim \Delta^{p/2} + \omega_{F,\sigma}(\Delta)^p.
\end{align*}
\end{lem}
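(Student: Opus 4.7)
The plan is to bound each of the seven sums separately, in three groups determined by structure.

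\emph{The Lebesgue-integral terms $\sum I^k_1$, $\sum J^k_1$, $\sum J^k_3$.} For each, I would split the integrand into a spatial increment and a temporal increment,
\[
F(s, Z_s) - F(k\Delta, Z_{k\Delta}) = \bigl[F(s, Z_s) - F(s, Z_{k\Delta})\bigr] + \bigl[F(s, Z_{k\Delta}) - F(k\Delta, Z_{k\Delta})\bigr],
\]
and the analogous split for $C$. Thanks to (A1)--(A2) and the bound $|Z|\le R$ on $[0,\tau^\varepsilon]$, the first summand is $\leq L|Z_s - Z_{k\Delta}|$ and the second is $\leq \omega_F(\Delta)$ or $\omega_\sigma(\Delta)$. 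The spatial excursion is then controlled by \autoref{lem:discrXeps} for $Z = X^\varepsilon$ (giving a factor $\Delta/\varepsilon$) or by \autoref{lem:discrX} for $Z = \bar{X}$ (giving $\Delta^{1/2}$). Summation over $k$ keeps the total integration length $\leq T$, yielding all three bounds.

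\emph{The stochastic-integral terms $\sum I^k_3$, $\sum I^k_4$, $\sum J^k_5$.} For $\sum I^k_3$, the inner integrand $\partial_r\sigma + D\sigma\cdot F$ is uniformly bounded on $[0,T]\times B_R(0)$ by (A1)--(A2), while $|\int_{k\Delta}^{(k+1)\Delta} Y^\varepsilon_s\,ds|\le \Delta\sup|Y^\varepsilon|$ and \autoref{lem:ou} contribute the $\varepsilon^{-1}$ factor, leading to $(\Delta/\varepsilon)^p$. For $\sum I^k_4$, the inner operator difference is bounded by $\omega_\sigma(\Delta) + L|X^\varepsilon_r - X^\varepsilon_{k\Delta}|$, and the two claimed contributions follow after invoking \autoref{lem:ou} and \autoref{lem:discrXeps} once more. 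For $\sum J^k_5$, the sum is a single $H_d$-valued stochastic integral against $W$ on $[0,h\Delta]$, so Burkholder--Davis--Gundy, combined with the usual Lipschitz/modulus split and \autoref{lem:discrX}, gives $\Delta^{p/2} + \omega_\sigma(\Delta)^p$.

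\emph{The main obstacle, $\sum I^k_8$.} Exploiting the Markov property of the Ornstein--Uhlenbeck process, I would write
\[
Y^\varepsilon_{(k+1)\Delta} = e^{-\Delta/\varepsilon^2}\,Y^\varepsilon_{k\Delta} + M^{k+1}, \qquad M^{k+1} := \int_{k\Delta}^{(k+1)\Delta}\varepsilon^{-2} e^{-\varepsilon^{-2}((k+1)\Delta - s)}\,dW_s,
\]
with $M^{k+1}$ an $\mathcal{F}_{(k+1)\Delta}$-martingale increment. This yields the decomposition
\[
\sum_{k=0}^{h-1} I^k_8 = \varepsilon^2\bigl(1-e^{-\Delta/\varepsilon^2}\bigr)\sum_{k=0}^{h-1}\sigma_k Y^\varepsilon_{k\Delta} - \varepsilon^2\sum_{k=0}^{h-1}\sigma_k M^{k+1}.
\]
The second sum is an $H_d$-valued discrete martingale in $h$; using $\|\sigma_k\|\lesssim 1$ on $\tau^\varepsilon$, the trace-class property of $Q$, and $\int_{k\Delta}^{(k+1)\Delta}\varepsilon^{-4} e^{-2\varepsilon^{-2}((k+1)\Delta-s)}\,ds\lesssim \varepsilon^{-2}$, its predictable quadratic variation is $\lesssim h\,\varepsilon^{-2}\lesssim T\varepsilon^{-2}/\Delta$, and Burkholder--Davis--Gundy (together with the prefactor $\varepsilon^2$) produces the $(\varepsilon^2/\Delta)^{p/2}$ contribution. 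For the first, non-martingale sum, I would perform a further summation by parts,
\[
\sum_{k=0}^{h-1}\sigma_k Y^\varepsilon_{k\Delta} = \sigma_0 Y^\varepsilon_0 - \sigma_{h-1} Y^\varepsilon_{(h-1)\Delta} + \sum_{k=1}^{h-1}(\sigma_k - \sigma_{k-1}) Y^\varepsilon_{k\Delta},
\]
where the boundary terms are $\lesssim \varepsilon^{-1}$ by \autoref{lem:ou} and are absorbed by the prefactor $\varepsilon^2(1-e^{-\Delta/\varepsilon^2})\lesssim\min(\varepsilon^2,\Delta)$, while the residual sum is treated by Cauchy--Schwarz in $k$, using \autoref{lem:discrXepsbis} for the $\sigma$-increments (whence the $\Delta/\varepsilon$ factor from the discretisation of $X^\varepsilon$) and \autoref{lem:ou} for the $Y^\varepsilon$-factors. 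The hardest point is precisely this last step: naive term-by-term bounds exceed the target by a factor growing like $h$, so one has to leverage both the smallness of the prefactor and the exponential decay of the correlation $\E[Y^\varepsilon_{k\Delta} Y^\varepsilon_{k'\Delta}]\propto \varepsilon^{-2}e^{-\Delta|k-k'|/\varepsilon^2}$ to bring the whole contribution within the claimed $(\varepsilon^2/\Delta)^p + (\Delta/\varepsilon)^p$.
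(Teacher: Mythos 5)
Your treatment of $\sum I^k_1$, $\sum I^k_3$, $\sum I^k_4$, $\sum J^k_1$, $\sum J^k_3$, $\sum J^k_5$ follows the same route as the paper (split into spatial increment plus modulus of continuity, then \autoref{lem:ou}, \autoref{lem:discrXeps}, \autoref{lem:discrXbis}, H\"older and Burkholder--Davis--Gundy), and those parts are fine. The problem is $\sum I^k_8$, which is exactly the term the lemma is really about, and there your argument has a genuine gap. The Markov-property split $Y^\varepsilon_{(k+1)\Delta}=e^{-\Delta/\varepsilon^2}Y^\varepsilon_{k\Delta}+M^{k+1}$ and the Burkholder bound for the martingale half are correct and do yield $(\varepsilon^2/\Delta)^{p/2}$; but the non-martingale half collapses. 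The identity you invoke,
\begin{equation*}
\sum_{k=0}^{h-1}\sigma_k Y^\varepsilon_{k\Delta} \;=\; \sigma_0 Y^\varepsilon_0 - \sigma_{h-1}Y^\varepsilon_{(h-1)\Delta} + \sum_{k=1}^{h-1}\left(\sigma_k-\sigma_{k-1}\right)Y^\varepsilon_{k\Delta},
\end{equation*}
is false: for $h=2$ the right-hand side equals $\sigma_0(Y^\varepsilon_0-Y^\varepsilon_\Delta)$, not $\sigma_0 Y^\varepsilon_0+\sigma_1 Y^\varepsilon_\Delta$. This is the Abel-summation identity for the \emph{telescoping} sum $\sum_k\sigma_k\left(Y^\varepsilon_{k\Delta}-Y^\varepsilon_{(k+1)\Delta}\right)$, which you no longer have after splitting off $M^{k+1}$. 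A correct summation by parts of $\sum_k\sigma_k Y^\varepsilon_{k\Delta}$ produces partial sums $S_k=\sum_{j\le k}Y^\varepsilon_{j\Delta}$ in place of single $Y^\varepsilon_{k\Delta}$'s, and these are \emph{not} of order $\varepsilon^{-1}$ via \autoref{lem:ou}: bounding them that way gives $h\varepsilon^{-1}\sim\varepsilon^{-1}\Delta^{-1}$ and the prefactor $\varepsilon^2(1-e^{-\Delta/\varepsilon^2})\approx\varepsilon^2$ does not save you, as you yourself note. Your fallback --- exploiting $\E[Y^\varepsilon_{k\Delta}Y^\varepsilon_{k'\Delta}]\propto\varepsilon^{-2}e^{-\Delta|k-k'|/\varepsilon^2}$ --- is only a gesture: the weights $\sigma_k=\sigma(k\Delta,X^\varepsilon_{k\Delta})$ are random, adapted and correlated with $Y^\varepsilon$, so the covariance computation does not factor; you would also need $p$-th moments rather than second moments, and a maximal inequality in $h$ for a partial-sum process that is not a martingale. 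None of this is supplied, so the claimed bound $(\varepsilon^2/\Delta)^{p/2}+(\varepsilon^2/\Delta)^p+(\Delta/\varepsilon)^p$ is not established.

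The fix is to skip the martingale split altogether and sum by parts directly on the telescoping increments, which is what the paper does: writing $\sum_{k=0}^{h-1}\sigma_k\varepsilon^2\left(Y^\varepsilon_{k\Delta}-Y^\varepsilon_{(k+1)\Delta}\right)$ as boundary terms plus $\sum_{k=1}^{h}\left(\sigma_k-\sigma_{k-1}\right)\varepsilon^2 Y^\varepsilon_{k\Delta}$, only single $Y^\varepsilon_{k\Delta}$'s appear, which cost $\varepsilon^{-1}$ by \autoref{lem:ou} and are killed by the factor $\varepsilon^2$, while the $\sigma$-increments are controlled by $|X^\varepsilon_{k\Delta}-X^\varepsilon_{(k-1)\Delta}|+\Delta$ and \autoref{lem:discrXepsbis}; after H\"older in $k$ this gives precisely the three claimed terms, with no decorrelation argument and with the supremum over $h$ handled trivially because the bounding sums are increasing in $h$.
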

\begin{proof}
Throughout this proof, we will frequently make use of (A1),(A2) without explicit mentioning.

For $\sum  I^k_1$, by H\"older's inequality and \autoref{lem:discrXeps},
\begin{align*}
%
%
\E \left[ \sup_{\substack{h=1,\dots,[T/\Delta]\\h\Delta \leq \tau^\varepsilon}} \left| 
\sum_{k=0}^{h-1} I^k_1 \right|^p\right] &\lesssim
\E \left[ \sup_{\substack{h=1,\dots,[T/\Delta]\\h\Delta \leq \tau^\varepsilon}} \left| 
\sum_{k=0}^{h-1} \int_{k\Delta}^{(k+1)\Delta} \left(\left|X^\varepsilon_s - X^\varepsilon_{k\Delta} \right| + \omega_F(s-k\Delta) \right) ds  \right|^p\right] \\
&\lesssim
\sum_{k=0}^{[T/\Delta]-1} \int_{k\Delta}^{(k+1)\Delta } \E \left[  \left|X^\varepsilon_{s\wedge \tau^\varepsilon} - X^\varepsilon_{k\Delta \wedge \tau^\varepsilon} \right|^p +\omega_F(\Delta)^p \right] ds \\
&\lesssim \left( \frac{\Delta}{\varepsilon} \right)^p + \omega_F(\Delta)^p.
\end{align*}

For $\sum I^k_3$, by H\"older's inequality and \autoref{lem:ou},
\begin{align*}
%
%
\E \left[ \sup_{\substack{h=1,\dots,[T/\Delta]\\h\Delta \leq \tau^\varepsilon}} \left| 
\sum_{k=0}^{h-1} I^k_3 \right|^p\right] 
&\lesssim \E \left[ \sup_{\substack{h=1,\dots,[T/\Delta]\\h\Delta \leq \tau^\varepsilon}} \left|\, \sup_{t \leq T}\left| Y^\varepsilon_t \right|  
\sum_{k=0}^{h-1} \int_{k\Delta}^{(k+1)\Delta} \left( s-k\Delta\right) ds  \right|^p\right] \\
&\lesssim \E \left[ \sup_{t \leq T}\left| Y^\varepsilon_t \right|^p  
\sum_{k=0}^{[T/\Delta]-1} \int_{k\Delta}^{(k+1)\Delta} \left| s-k\Delta\right|^p ds \right] \lesssim \left( \frac{\Delta}{\varepsilon} \right)^p. 
\end{align*}

For $\sum  I^k_4$, by H\"older's inequality, \autoref{lem:ou} and \autoref{lem:discrXeps},
\begin{align*}
%
%
\E& \left[ \sup_{\substack{h=1,\dots,[T/\Delta]\\h\Delta \leq \tau^\varepsilon}} \left| 
\sum_{k=0}^{h-1} I^k_4 \right|^p\right]\\
&\lesssim
\E \left[ \sup_{\substack{h=1,\dots,[T/\Delta]\\h\Delta \leq \tau^\varepsilon}} \left|\, \sup_{t \leq T}\left| Y^\varepsilon_t \right|^2  
\sum_{k=0}^{h-1} \int_{k\Delta}^{(k+1)\Delta} \left( \int_{k\Delta}^s \left( \left| X^\varepsilon_r - X^\varepsilon_{k\Delta} \right| + \omega_\sigma(r-k\Delta) \right)  dr \right) ds  \right|^p\right] \\
&\lesssim
\E \left[ \sup_{\substack{h=1,\dots,[T/\Delta]\\h\Delta \leq \tau^\varepsilon}}  \sup_{t \leq T}\left| Y^\varepsilon_t \right|^{2p}  
\sum_{k=0}^{h-1} \int_{k\Delta}^{(k+1)\Delta} \left| \int_{k\Delta}^s \left( \left| X^\varepsilon_r - X^\varepsilon_{k\Delta} \right| + \omega_\sigma(r-k\Delta) \right) dr \right|^p ds  \right] \\ 
&\lesssim
\varepsilon^{-2p} \left(
\sum_{k=0}^{[T/\Delta]-1} \int_{k\Delta}^{(k+1)\Delta} (s-k\Delta)^{pq'-1}  
 \int_{k\Delta}^s \left( \E \left[ \left| X^\varepsilon_{r\wedge \tau^\varepsilon} - X^\varepsilon_{k\Delta \wedge \tau^\varepsilon} \right|^{pq'} + \omega_\sigma(\Delta)^{pq'} \right] dr \right) ds
 \right)^{1/q'}\\
 &\lesssim
\varepsilon^{-3p} \left( \sum_{k=0}^{[T/\Delta]-1}\int_{k\Delta}^{(k+1)\Delta} (s-k\Delta)^{2pq'}  ds
\right)^{1/q'} 
+ \left( \frac{\Delta}{\varepsilon^2}\right)^p \omega_\sigma(\Delta)^p \\
&\lesssim \left( \frac{\Delta^2}{\varepsilon^3} \right)^p + \left( \frac{\Delta}{\varepsilon^2}\right)^p \omega_\sigma(\Delta)^p.
\end{align*}

We now consider $\sum I^k_8$. 
Here, the idea is to convert $Y^\varepsilon$-increments into $X^\varepsilon$-increments
via integration by parts since $X^\varepsilon$-increments are easier to control. 
This way, applying \autoref{lem:ou} and \autoref{lem:discrXepsbis},
\begin{align*}
%
%
\E \left[ \sup_{\substack{h=1,\dots,[T/\Delta]\\h\Delta \leq \tau^\varepsilon}} \left| 
\sum_{k=0}^{h-1} I^k_8 \right|^p\right] 
&\lesssim
\E \left[ \sup_{\substack{h=1,\dots,[T/\Delta]\\h\Delta \leq \tau^\varepsilon}} \left| 
\sum_{k=0}^{h-1} \sigma(k\Delta,X^\varepsilon_{k\Delta})\varepsilon^2 \left( Y^\varepsilon_{k\Delta} - Y^\varepsilon_{(k+1)\Delta} \right) \right|^p\right]\\
&\lesssim
\E \left[ \sup_{\substack{h=1,\dots,[T/\Delta]\\h\Delta \leq \tau^\varepsilon}} \left| 
\sum_{k=1}^{h} \left( \sigma(k\Delta,X^\varepsilon_{k\Delta}) - \sigma((k-1)\Delta,X^\varepsilon_{(k-1)\Delta}) \right)\varepsilon^2 Y^\varepsilon_{k\Delta}\right|^p\right] \\
&\lesssim
\E \left[ \sup_{\substack{h=1,\dots,[T/\Delta]\\h\Delta \leq \tau^\varepsilon}} \sup_{t \leq T}\left| \varepsilon^2 Y^\varepsilon_t \right|^{p}  \left| 
\sum_{k=1}^{h} \left(\left| X^\varepsilon_{k\Delta} - X^\varepsilon_{(k-1)\Delta}\right| + \Delta\right)\right|^p\right] \\
&\lesssim
\E \left[ \sup_{t \leq T}\left| \varepsilon^2 Y^\varepsilon_t \right|^{pq} \right]^{1/q} 
\E \left[ \sup_{\substack{h=1,\dots,[T/\Delta]\\h\Delta \leq \tau^\varepsilon}} \left| 
\sum_{k=1}^{h} \left(\left| X^\varepsilon_{k\Delta} - X^\varepsilon_{(k-1)\Delta}\right| + \Delta\right) \right|^{pq'} \right]^{1/q'} \\
&\lesssim
\varepsilon^p \Delta^{1/q'-p}
\left( \sum_{k=1}^{[T/\Delta]} \E \left[  \left| X^\varepsilon_{k\Delta \wedge\tau^\varepsilon} - X^\varepsilon_{(k-1)\Delta\wedge\tau^\varepsilon}\right|^{pq'} + \Delta^{pq'} \right] \right)^{1/q'} \\
&\lesssim
\varepsilon^p \Delta^{-p}
\left( \Delta^{pq'/2} + \varepsilon^{pq'} + \left( \frac{\Delta}{\varepsilon}\right)^{2pq'} \right)^{1/q'} \\
&\lesssim 
\left( \frac{\varepsilon^2}{\Delta}\right)^{p/2} +\left( \frac{\varepsilon^2}{\Delta}\right)^{p} + \left( \frac{\Delta}{\varepsilon}\right)^{p}.
\end{align*}

In a similar way,
for $\sum J^k_1$ and $\sum J^k_3$, now applying \autoref{lem:discrXbis},
\begin{align*}
%
%
\E \left[ \sup_{\substack{h=1,\dots,[T/\Delta]\\h\Delta \leq \tau^\varepsilon}} \left| 
\sum_{k=0}^{h-1} J^k_1 \right|^p + \left| 
\sum_{k=0}^{h-1} J^k_3 \right|^p\right] &\lesssim
\E \left[ \sup_{\substack{h=1,\dots,[T/\Delta]\\h\Delta \leq \tau^\varepsilon}} \left| 
\sum_{k=0}^{h-1} \int_{k\Delta}^{(k+1)\Delta} \left(\left|\bar{X}_s - \bar{X}_{k\Delta} \right| +\omega_{F,\sigma}(s-k\Delta) \right) ds  \right|^p\right] \\
&\lesssim
\sum_{k=0}^{[T/\Delta]-1} \int_{k\Delta}^{(k+1)\Delta} \E \left[  \left|\bar{X}_{s\wedge \tau^\varepsilon} - \bar{X}_{k\Delta\wedge \tau^\varepsilon} \right|^p  +\omega_{F,\sigma}(\Delta)^p \right] ds\\
&\lesssim \Delta^{p/2} +\omega_{F,\sigma}(\Delta)^p .
\end{align*}

For the last sum $\sum J^k_5$, by Burkholder-Davis-Gundy's inequality and \autoref{lem:discrXbis},
\begin{align*}
%
%
\E \left[ \sup_{\substack{h=1,\dots,[T/\Delta]\\h\Delta \leq \tau^\varepsilon}} \left| 
\sum_{k=0}^{h-1} J^k_5 \right|^p\right] &\lesssim
\E \left[ \sup_{\substack{h=1,\dots,[T/\Delta]\\h\Delta \leq \tau^\varepsilon}} \left| 
\sum_{k=0}^{h-1} \int_{k\Delta}^{(k+1)\Delta} \left( \sigma(s,\bar{X}_s) - \sigma(k\Delta,\bar{X}_{k\Delta}) \right) dW_s  \right|^p\right] \\
&\lesssim
\E \left[ \left| 
\sum_{k=0}^{[T/\Delta]-1} \int_{k\Delta \wedge\tau^\varepsilon}^{(k+1)\Delta\wedge\tau^\varepsilon} 
\left| \sigma(s,\bar{X}_s) - \sigma(k\Delta,\bar{X}_{k\Delta}) \right|^2 ds  \right|^{p/2}\right] \\
&\lesssim
\E \left[ \left| 
\sum_{k=0}^{[T/\Delta]-1} \int_{k\Delta \wedge\tau^\varepsilon}^{(k+1)\Delta\wedge\tau^\varepsilon} 
\left| \sigma(s,\bar{X}_s) - \sigma(k\Delta,\bar{X}_{k\Delta}) \right|^2 ds  \right|^{p}\,\right]^{1/2} \\
&\lesssim
\left( \sum_{k=0}^{[T/\Delta]-1} \int_{k\Delta}^{(k+1)\Delta}
\E \left[  \left|\bar{X}_{s\wedge \tau^\varepsilon} - \bar{X}_{k\Delta\wedge \tau^\varepsilon} \right|^{2p} +(s-k\Delta)^{2p} \right] ds \right)^{1/2} \lesssim \;\Delta^{p/2}. 
\end{align*}
\end{proof}
\begin{rmk}\rm\label{choiceDelta}
The estimates given in \autoref{lem:aux} motivate the following choice of how 
$\Delta=\Delta_\varepsilon$ should behave when $\varepsilon$ goes to zero:
\[
\Delta^2/\varepsilon^3 \to 0,\quad
\omega_\sigma(\Delta)\Delta/\varepsilon^2 \to 0,\quad
\varepsilon^2/\Delta \to 0.
\]
Such a choice is always possible. Indeed, without loss of generality, 
we can suppose $\omega_\sigma(t)>t^{1/2}$, for every $t \in [0,T]$, 
and then define $\Delta=\Delta_\varepsilon$ via
$\Delta \sqrt{\omega_\sigma(\Delta)} = \varepsilon^2$.
\end{rmk}

We now discuss the 2nd sum on the right-hand side of \eqref{contribute}, that is
\begin{equation*}
\sum_{k=0}^{h-1} 
\left(
\int_{k\Delta}^{(k+1)\Delta} \left( \int_{k\Delta}^s  D\sigma(k\Delta,\bar{X}_{k\Delta})\sigma(k\Delta,\bar{X}_{k\Delta})  dW^\varepsilon_r \right) dW^\varepsilon_s - \int_{k\Delta}^{(k+1)\Delta} C(k\Delta,\bar{X}_{k\Delta}) ds
\right),
\end{equation*}
the $i$-th component of which, when plugging in \eqref{eq:Strat}, reads 
\begin{equation*}
\sum_{k=0}^{h-1} \sum_{\ell,m \in \N} \sum_{j=1,\dots,d}D_j \sigma^{i,m}(k\Delta,\bar{X}_{k\Delta})\sigma^{j,\ell}(k\Delta,\bar{X}_{k\Delta}) \left( c_{\ell,m}^k(\Delta,\varepsilon) -\delta_{\ell,m}\frac{q_m}{2} \Delta \right),
\end{equation*}
where $c_{\ell,m}^k(\Delta,\varepsilon)$ is given by
\begin{equation*}
c_{\ell,m}^k(\Delta,\varepsilon) 
= \int_{k\Delta}^{(k+1)\Delta} \left( \int_{k\Delta}^s dW^{\varepsilon,\ell}_r \right) dW^{\varepsilon,m}_s.
\end{equation*}

Taking the conditional expectation of
$c_{\ell,m}^k(\Delta,\varepsilon)$ with respect to $\mathcal{F}_{k\Delta}$ yields
\begin{align*}
\E \left[ c_{\ell,m}^k(\Delta,\varepsilon) \mid \mathcal{F}_{k\Delta}\right] 
= &\int_{k\Delta}^{(k+1)\Delta} \left( \int_{k\Delta}^s \E \left[ Y^{\varepsilon,\ell}_r Y^{\varepsilon,m}_s \mid \mathcal{F}_{k\Delta}\right] dr \right) ds
\\
= &Y^{\varepsilon,\ell}_{k\Delta}
Y^{\varepsilon,m}_{k\Delta} 
\int_{k\Delta}^{(k+1)\Delta} \left( \int_{k\Delta}^s   e^{-\varepsilon^{-2}(r+s-2k\Delta)} dr \right) ds 
\\
&+ \delta_{\ell,m}
\int_{k\Delta}^{(k+1)\Delta} \left( \int_{k\Delta}^s
q_\ell \frac{\varepsilon^{-2}}{2} 
\left( e^{-\varepsilon^{-2}(s-r)} - e^{-\varepsilon^{-2}(r+s-2k\Delta)} \right) 
dr \right) ds,
\end{align*}
where the following representation of $Y^\varepsilon$,
\begin{align*}
Y^{\varepsilon,m}_{s}  = 
Y^{\varepsilon,m}_{k\Delta} 
e^{-\varepsilon^{-2}(s-k\Delta)} + 
\int_{k\Delta}^{s} e^{-\varepsilon^{-2}(s-r)} \varepsilon^{-2} dW^m_r,
\end{align*}
has been used, and this conditional expectation can easily be calculated as
\begin{align} \label{eq:condE}
\E \left[ c_{\ell,m}^k(\Delta,\varepsilon) \mid \mathcal{F}_{k\Delta}\right] 
&= 
\frac{\varepsilon^4}{2} 
Y^{\varepsilon,\ell}_{k\Delta}
Y^{\varepsilon,m}_{k\Delta} 
\left( e^{-\varepsilon^{-2}\Delta} -1 \right)^2
+
\delta_{\ell,m}\frac{q_m}{2} \left( \Delta + \varepsilon^2 \left( - \frac{3}{2} + 2 e^{-\varepsilon^{-2}\Delta} - \frac{1}{2} e^{-2\varepsilon^{-2}\Delta}\right)  \right).
\end{align}

Now, since 
$\sum_{j=1,\dots,d} D_j \sigma^{i,m}(k\Delta,\bar{X}_{\tau^\varepsilon\wedge(k\Delta)})
\sigma^{j,\ell}(k\Delta,\bar{X}_{\tau^\varepsilon\wedge(k\Delta)})$ 
is $\mathcal{F}_{k\Delta}$ measurable, for every $\ell,m \in \N$, $i=1,\dots,d$, 
each process $M^i_h,\,h=1,\dots,[T/\Delta]$, given by
\begin{equation*}
M^i_h =  \sum_{k=0}^{h-1} \sum_{\ell,m \in \N} \sum_{j=1,\dots,d}
D_j \sigma^{i,m}(k\Delta,\bar{X}_{\tau^\varepsilon\wedge(k\Delta)})
\sigma^{j,\ell}(k\Delta,\bar{X}_{\tau^\varepsilon\wedge(k\Delta)}) \left( c_{\ell,m}^k(\Delta,\varepsilon) -\E \left[ c_{\ell,m}^k(\Delta,\varepsilon) \mid \mathcal{F}_{k\Delta}\right]  \right),
\end{equation*}
is a discrete martingale with respect to the filtration 
$(\mathcal{F}_{h\Delta})_{h=1}^{[T/\Delta]}$. 
\begin{lem} \label{lem:M}
For each $i=1,\dots,d$,
\begin{align*}
\E \left[ \sup_{\substack{h=1,\dots,[T/\Delta]\\h\Delta \leq \tau^\varepsilon}} \left| 
M^i_h \right|^2\right]
\lesssim \left( \frac{\Delta}{\varepsilon}\right)^2 + \Delta .
\end{align*}
\end{lem}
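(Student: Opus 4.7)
The plan is to exploit the discrete-time martingale structure of $(M^i_h)_{h=1,\ldots,[T/\Delta]}$ adapted to $(\mathcal F_{h\Delta})$. Because the coefficients $a^k_{j,\ell,m}:=D_j\sigma^{i,m}(k\Delta,\bar X_{\tau^\varepsilon\wedge k\Delta})\sigma^{j,\ell}(k\Delta,\bar X_{\tau^\varepsilon\wedge k\Delta})$ are $\mathcal F_{k\Delta}$-measurable and bounded by the stopping at $\tau^\varepsilon$, while $\xi^k_{\ell,m}:=c^k_{\ell,m}-\E[c^k_{\ell,m}\mid\mathcal F_{k\Delta}]$ is conditionally centred by construction, $M^i$ is a genuine martingale on the whole index range. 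I would first drop the restriction $h\Delta\le\tau^\varepsilon$ inside the supremum (which only increases the expectation), then apply Doob's $L^2$-maximal inequality and orthogonality of martingale differences to get
\[
\E\!\left[\sup_{h\Delta\le\tau^\varepsilon}|M^i_h|^2\right] \;\lesssim\; \E\bigl|M^i_{[T/\Delta]}\bigr|^2 \;=\; \sum_{k=0}^{[T/\Delta]-1}\E\bigl|M^i_{k+1}-M^i_k\bigr|^2.
\]
Since there are $\lesssim 1/\Delta$ summands, the lemma reduces to the per-increment bound $\E|M^i_{k+1}-M^i_k|^2 \lesssim \Delta^3/\varepsilon^2+\Delta^2$.

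To establish that per-increment estimate, I would analyse $\xi^k_{\ell,m}$ by combining two complementary manipulations of the OU process. First, the Duhamel formula
\[
Y^{\varepsilon,m}_s = Y^{\varepsilon,m}_{k\Delta}\,e^{-\varepsilon^{-2}(s-k\Delta)} + \int_{k\Delta}^s e^{-\varepsilon^{-2}(s-u)}\varepsilon^{-2}\,dW^m_u, \qquad s\in[k\Delta,(k+1)\Delta],
\]
splits $Y^{\varepsilon,m}$ on each subinterval into an $\mathcal F_{k\Delta}$-measurable boundary piece and a local OU-martingale increment independent of $\mathcal F_{k\Delta}$. Second, the identity $Y^{\varepsilon,m}_s\,ds=-\varepsilon^2\,dY^{\varepsilon,m}_s+dW^m_s$, which is a re-reading of the OU dynamics, allows me to rewrite $c^k_{\ell,m}$, up to $\varepsilon^2$-boundary terms, as the It\^o integral $\int_{k\Delta}^{(k+1)\Delta}(W^{\varepsilon,\ell}_s-W^{\varepsilon,\ell}_{k\Delta})\,dW^m_s$ driven by the underlying Wiener process $W^m$. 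After subtracting $\E[c^k_{\ell,m}\mid\mathcal F_{k\Delta}]$ (whose explicit value is given by \eqref{eq:condE}), all $\mathcal F_{k\Delta}$-measurable contributions cancel and the conditional covariances $\E[\xi^k_{\ell,m}\xi^k_{\ell',m'}\mid\mathcal F_{k\Delta}]$ become explicit via It\^o isometry, producing factors $\delta_{\ell,\ell'}q_\ell$ and $\delta_{m,m'}q_m$. Under (A2) and the boundedness of $\bar X$ on $[0,\tau^\varepsilon]$, the sums $\sum_{\ell,m}q_\ell q_m|a^k_{j,\ell,m}|^2$ are finite uniformly in $k,\varepsilon$ thanks to the trace class property of $Q$, so the $(\ell,m)$-series converges and only contributes a bounded multiplicative constant.

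The main obstacle will be separating the two distinct scales that jointly yield the final bound. The classical It\^o integral $\int(W^{\varepsilon,\ell}_s-W^{\varepsilon,\ell}_{k\Delta})\,dW^m_s$ has conditional variance of order $\Delta^2$ per increment and, after summing $T/\Delta$ terms, accounts for the $\Delta$-contribution in the statement. The mixed terms containing an $\mathcal F_{k\Delta}$-measurable $Y^{\varepsilon,\cdot}_{k\Delta}$-factor (of $L^2$-size $\varepsilon^{-1}$) multiplied by a local OU-martingale increment (of $L^2$-size $\Delta/\varepsilon$) give a variance of order $\Delta^3/\varepsilon^2$ per increment and generate the $(\Delta/\varepsilon)^2$-contribution. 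Any residual $\varepsilon^2$-boundary terms produced by the integration-by-parts are dominated by the previous two under the regime $\varepsilon^2/\Delta\to 0$ of \autoref{choiceDelta}. A direct moment bound on $c^k_{\ell,m}$ that ignored the martingale cancellation coming from the conditioning on $\mathcal F_{k\Delta}$ would be too crude to yield the $(\Delta/\varepsilon)^2+\Delta$ scaling, which is why the Doob/orthogonality route is indispensable.
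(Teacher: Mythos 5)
Your proposal is correct in its essential structure and shares the paper's key step: reduce, via Doob's $L^2$-maximal inequality and orthogonality of the martingale differences, to a per-increment bound of order $\Delta^3/\varepsilon^2+\Delta^2$, which after summing the $\sim T/\Delta$ increments gives $(\Delta/\varepsilon)^2+\Delta$. Where you diverge is in how the per-increment moment is estimated. The paper is much more economical: it simply discards the subtracted conditional expectation using that conditioning is an $L^2$-projection, and then bounds
\begin{equation*}
\E\Bigl[\bigl|c^k_{\ell,m}(\Delta,\varepsilon)\bigr|^2\Bigr]
\le \Delta\int_{k\Delta}^{(k+1)\Delta}\E\Bigl[\bigl|W^{\varepsilon,\ell}_s-W^{\varepsilon,\ell}_{k\Delta}\bigr|^2\bigl|Y^{\varepsilon,m}_s\bigr|^2\Bigr]ds
\lesssim q_\ell q_m\,\Delta^2\,\varepsilon^{-2}(\Delta+\varepsilon^2),
\end{equation*}
by H\"older in $\omega$ together with \eqref{eq:Weps} and \autoref{lem:ou}; no It\^o rewriting, Duhamel splitting, or conditional covariance computation is needed. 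So your closing claim that a direct moment bound on $c^k_{\ell,m}$ ignoring the within-increment cancellation "would be too crude" is not quite right: what is indispensable is only the orthogonality \emph{across} increments (a plain triangle-inequality bound over the $T/\Delta$ blocks would indeed fail), and that part you do use. Your route through $Y^{\varepsilon,m}_s\,ds=-\varepsilon^2\,dY^{\varepsilon,m}_s+dW^m_s$ and the It\^o isometry is workable and in fact yields a sharper main term ($\Delta^2+\Delta\varepsilon^2$ per increment for the genuine It\^o integral, plus the explicit Kronecker structure in $(\ell,m)$), at the price of $\varepsilon^2$-boundary terms of order $\varepsilon^2(\Delta+\varepsilon^2)$ per increment, which are controlled by $(\Delta/\varepsilon)^2+\Delta$ only in the regime $\varepsilon^2\lesssim\Delta$ of \autoref{choiceDelta}; since the lemma is applied exactly there, this restriction is harmless, though the paper's statement holds without it. One further small inaccuracy: your heuristic that the mixed boundary-times-local terms have variance $\Delta^3/\varepsilon^2$ (from factors of size $\varepsilon^{-1}$ and $\Delta/\varepsilon$) does not match its own arithmetic, and in your It\^o-isometry framework those terms are in fact of the smaller order $\varepsilon^2\Delta$; this does not affect the validity of the upper bound you are after, only the bookkeeping narrative.
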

\begin{proof}
Combining Doob's maximal inequality and martingale property gives
\begin{align*}
\E \left[ \sup_{\substack{h=1,\dots,[T/\Delta]\\h\Delta \leq \tau^\varepsilon}} \left| 
M^i_h \right|^2\right] 
&\lesssim
\E \left[ \left| M^i_{[T/\Delta]} \right|^2\right] \\
&\lesssim 
\sum_{k=0}^{[T /\Delta]-1} \E \left[ \left| \sum_{\ell,m \in \N} 
c_{\ell,m}^{k}(\Delta,\varepsilon) 
-\E \left[ c_{\ell,m}^{}(\Delta,\varepsilon) \mid \mathcal{F}_{k\Delta}\right] \right|^2 \right],
\end{align*}
where
\begin{align*}
\E \left[ \left| \sum_{\ell,m \in \N} c_{\ell,m}^{k}(\Delta,\varepsilon) 
-\E \left[ c_{\ell,m}^{k}(\Delta,\varepsilon) \mid \mathcal{F}_{k\Delta}\right] \right|^2 \right]
&\lesssim
\E \left[ \left| \sum_{\ell,m \in \N} c_{\ell,m}^{k}(\Delta,\varepsilon) \right|^2 \right],
\end{align*}
for each $k=0,\dots,[T/\Delta]-1$,
because the conditional expectation is an $L^2$-projection.
Thus, by independence of $Y^{\varepsilon,\ell}$ and $Y^{\varepsilon,m}$, for every $\ell \neq m$, we can estimate
\begin{align*}
\E \left[ \sup_{\substack{h=1,\dots,T/\Delta\\h\Delta \leq \tau^\varepsilon}} \left| 
M^i_h \right|^2\right] 
&\lesssim
\sum_{k=0}^{T/\Delta-1} \sum_{\ell,m \in \N} 
\E \left[ \left| \int_{k\Delta}^{(k+1)\Delta} \left( W^{\varepsilon,\ell}_s -W^{\varepsilon,\ell}_{k\Delta} \right) dW^{\varepsilon,m}_s \right|^2 \right] \\
&\lesssim
\sum_{k=0}^{T/\Delta-1}  \sum_{\ell,m \in \N}
\Delta \int_{k\Delta}^{(k+1)\Delta} \E \left[ \left| \left( W^{\varepsilon,\ell}_s -W^{\varepsilon,\ell}_{k\Delta} \right) Y^{\varepsilon,m}_s \right|^2 \right] ds\\
&\lesssim
\sum_{k=0}^{T/\Delta-1}  \sum_{\ell,m \in \N}
\Delta \int_{k\Delta}^{(k+1)\Delta} \E \left[  \left| W^{\varepsilon,\ell}_s - W^{\varepsilon,\ell}_{k\Delta} \right|^{2q} \right]^{1/q} \E \left[ \left| Y^{\varepsilon,m}_s \right|^{2q'} \right]^{1/q'} ds\\
&\lesssim
 \sum_{k=0}^{T/\Delta-1} 
\sum_{\ell,m \in \N} q_\ell q_m  \Delta \varepsilon^{-2} \int_{k\Delta}^{(k+1)\Delta} \left( \Delta + \varepsilon^2 \right) ds \lesssim
\left( \frac{\Delta}{\varepsilon}\right)^2 + \Delta.
\end{align*}
\end{proof}

To eventually cover the remainder of the 2nd sum on the right-hand side of \eqref{contribute},
after subtracting the martingale term $M_h$,
we introduce
\begin{equation*}
N^i_h = \sum_{k=0}^{h-1} \sum_{\ell,m \in \N}\sum_{j=1,\dots,d}
D_j \sigma^{i,m}(k\Delta,\bar{X}_{k\Delta})\sigma^{j,\ell}(k\Delta,\bar{X}_{k\Delta})
\left(
\E \left[ c_{\ell,m}^k(\Delta,\varepsilon) \mid \mathcal{F}_{k\Delta}\right]
 - \delta_{\ell,m}\frac{q_m}{2} \Delta \right).
\end{equation*}
\begin{lem} \label{lem:N}
For each $i=1,\dots,d$,
\begin{equation*}
\E \left[ \sup_{\substack{h=1,\dots,[T/\Delta]\\h\Delta \leq \tau^\varepsilon}} \left| 
N^i_h \right|^2\right] \lesssim  \left( \frac{\varepsilon^2}{\Delta}\right)^2.
\end{equation*}
\end{lem}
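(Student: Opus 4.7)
The plan is to substitute the explicit formula \eqref{eq:condE} into the definition of $N^i_h$, so that the term $\delta_{\ell,m}\frac{q_m}{2}\Delta$ is cancelled and one is left with two contributions of a rather different nature. Concretely, writing $g_\varepsilon=-\tfrac32+2e^{-\varepsilon^{-2}\Delta}-\tfrac12 e^{-2\varepsilon^{-2}\Delta}$, which is bounded uniformly in $\varepsilon,\Delta$, decompose $N^i_h=N^{i,(1)}_h+N^{i,(2)}_h$, where
\begin{align*}
N^{i,(1)}_h &= \frac{\varepsilon^2}{2}\,g_\varepsilon\sum_{k=0}^{h-1}\sum_{\ell\in\N}\sum_{j=1}^{d} D_j\sigma^{i,\ell}(k\Delta,\bar{X}_{k\Delta})\,\sigma^{j,\ell}(k\Delta,\bar{X}_{k\Delta})\,q_\ell, \\
N^{i,(2)}_h &= \frac{\varepsilon^4}{2}\bigl(e^{-\varepsilon^{-2}\Delta}-1\bigr)^{2}\sum_{k=0}^{h-1}\sum_{\ell,m\in\N}\sum_{j=1}^{d} D_j\sigma^{i,m}(k\Delta,\bar{X}_{k\Delta})\,\sigma^{j,\ell}(k\Delta,\bar{X}_{k\Delta})\,Y^{\varepsilon,\ell}_{k\Delta}Y^{\varepsilon,m}_{k\Delta}.
\end{align*}

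The first piece is essentially deterministic and trivial: on the event $h\Delta\leq\tau^\varepsilon$ the arguments $\bar{X}_{k\Delta}$ lie in $B_R(0)$, so by (A2) the operators $\sigma(k\Delta,\bar{X}_{k\Delta})$ and $D\sigma(k\Delta,\bar{X}_{k\Delta})$ are uniformly bounded, and since $\sum_\ell q_\ell<\infty$ the inner $\ell$-sum is uniformly bounded. The outer $k$-sum then contributes at most $[T/\Delta]$ terms, giving the pointwise bound $|N^{i,(1)}_h|\lesssim\varepsilon^{2}/\Delta$.

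For $N^{i,(2)}_h$ the key observation is that, for fixed $k$, the inner triple sum is a bilinear form in $Y^\varepsilon_{k\Delta}\in H_\infty$ that can be rewritten as
\[
\sum_{j=1}^{d}\bigl\langle D_j\sigma(k\Delta,\bar{X}_{k\Delta})\,Y^\varepsilon_{k\Delta},\mathbf{e}_i\bigr\rangle\,\bigl\langle\sigma(k\Delta,\bar{X}_{k\Delta})\,Y^\varepsilon_{k\Delta},\mathbf{e}_j\bigr\rangle,
\]
so that by (A2) and Cauchy–Schwarz it is pointwise dominated by a constant times $|Y^\varepsilon_{k\Delta}|^{2}$. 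Using $(1-e^{-\varepsilon^{-2}\Delta})^2\leq 1$, taking suprema and then $L^{2}$-norms, and applying \autoref{lem:ou} with $p=4$ together with the elementary estimate $(\sum_{k=0}^{[T/\Delta]-1}a_k)^2\leq [T/\Delta]\sum_k a_k^2$, one obtains
\[
\E\Bigl[\sup_{h\Delta\leq\tau^\varepsilon}|N^{i,(2)}_h|^{2}\Bigr]\lesssim\varepsilon^{8}\,[T/\Delta]^{2}\,\E\bigl[\sup_{t\le T}|Y^\varepsilon_t|^{4}\bigr]\lesssim\frac{\varepsilon^{8}}{\Delta^{2}}\cdot\varepsilon^{-4}=\Bigl(\frac{\varepsilon^{2}}{\Delta}\Bigr)^{\!2}.
\]
Combining both bounds via the triangle inequality yields the claim.

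The main technical point to get right is the bilinear-form estimate for $N^{i,(2)}_h$: one cannot naively bound $\sum_{\ell,m}|\sigma^{i,m}|\,|\sigma^{j,\ell}|\,|Y^{\varepsilon,\ell}|\,|Y^{\varepsilon,m}|$ by a product of Hilbert–Schmidt norms, since (A2) only provides operator-norm control. Recasting the sum as the two inner products above circumvents this and reduces everything to $\|\sigma\|_{\mathrm{op}}\,\|D\sigma\|_{\mathrm{op}}\,|Y^\varepsilon_{k\Delta}|^{2}$, which is exactly what is needed to close the estimate.
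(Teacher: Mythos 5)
Your proposal is correct, and at its core it follows the same route as the paper: substitute \eqref{eq:condE} into the definition of $N^i_h$ so that the $\delta_{\ell,m}\tfrac{q_m}{2}\Delta$ term cancels, observe that what remains is of size $\varepsilon^2$ per node (diagonal part) plus $\varepsilon^4|Y^\varepsilon_{k\Delta}|^2$ per node (quadratic part), sum over the $[T/\Delta]$ nodes, and invoke \autoref{lem:ou} to control the $Y^\varepsilon$-contribution, yielding $(\varepsilon^2/\Delta)^2$ after squaring. The one place where you genuinely improve on the paper's two-line argument is the off-diagonal quadratic piece: the paper simply bounds $|N^i_h|$ by $\sum_k\sum_{\ell,m}\bigl|\E[c^k_{\ell,m}\mid\mathcal{F}_{k\Delta}]-\delta_{\ell,m}\tfrac{q_m}{2}\Delta\bigr|$ with the $\sigma$-entries absorbed into the constant, which, taken literally, leads to sums like $\sum_{\ell,m}\sqrt{q_\ell q_m}$ that trace-class $Q$ alone does not make finite; your rewriting of $\sum_{\ell,m,j}D_j\sigma^{i,m}\sigma^{j,\ell}Y^{\varepsilon,\ell}_{k\Delta}Y^{\varepsilon,m}_{k\Delta}$ as $\sum_j\langle D_j\sigma\,Y^\varepsilon_{k\Delta},\mathbf{e}_i\rangle\langle\sigma\,Y^\varepsilon_{k\Delta},\mathbf{e}_j\rangle$, bounded by operator norms times $|Y^\varepsilon_{k\Delta}|^2$ on the localized event, closes this summability gap cleanly and needs nothing beyond (A2) and \autoref{lem:ou} with $p=4$. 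All the individual steps (boundedness of $g_\varepsilon$ and of $(1-e^{-\varepsilon^{-2}\Delta})^2$, the localization giving uniform bounds on $\sigma$, $D\sigma$ at $\bar{X}_{k\Delta}$ for $k\Delta\le\tau^\varepsilon$, and the final bookkeeping $\varepsilon^8\Delta^{-2}\varepsilon^{-4}=(\varepsilon^2/\Delta)^2$) check out.
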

\begin{proof}
The proof is an easy consequence of \eqref{eq:condE}. Indeed,
\begin{align*}
\E \left[ \sup_{\substack{h=1,\dots,[T/\Delta]\\h\Delta \leq \tau^\varepsilon}} \left| 
N^i_h \right|^2\right] &\lesssim 
\E \left[ \sup_{\substack{h=1,\dots,[T/\Delta]\\h\Delta \leq \tau^\varepsilon}} \left| 
\sum_{k=0}^{h-1} \sum_{\ell,m \in \N} \left|\E \left[ c_{\ell,m}^k(\Delta,\varepsilon) \mid \mathcal{F}_{k\Delta}\right] - \delta_{\ell,m}\frac{q_m}{2} \Delta \right| \right|^2\right] \\
&\lesssim 
\varepsilon^4 \Delta^{-1} \sum_{k=0}^{[T/\Delta]-1}
\sum_{\ell,m \in \N} q_\ell q_m
\lesssim 
\left( \frac{\varepsilon^2}{\Delta}\right)^2.
\end{align*}
\end{proof}

All in all, \autoref{lem:M} and \autoref{lem:N} together imply
\begin{align*}
\E \left[ \sup_{\substack{h=1,\dots,[T/\Delta]\\h\Delta \leq \tau^\varepsilon}} \left| 
\sum_{k=0}^{h-1} 
\left( I^k_6 - J^k_4 \right)  \right|^2\right] 
&= 
\E \left[ \sup_{\substack{h=1,\dots,[T/\Delta]\\h\Delta \leq \tau^\varepsilon}} \left| 
\left( M_h + N_h \right) \right|^2\right] 
\lesssim 
\left( \frac{\Delta}{\varepsilon}\right)^2 + \Delta +\left( \frac{\varepsilon^2}{\Delta}\right)^2,
\end{align*}
showing that the 2nd sum on the right-hand side of \eqref{contribute}
can be neglected, like the 5th one, when $\varepsilon\downarrow 0$,
and $\Delta=\Delta_\varepsilon$ behaves as described in \autoref{choiceDelta}.

Recall that we wanted  to control the remaining sums in terms of the difference $X^\varepsilon - \bar{X}$ itself,
which is obvious for the first and third sum on the right-hand side of \eqref{contribute}.
However, in case of the fourth sum, 
applying almost the same martingale argument used in case of the 2nd sum,
each term $I^k_5$ can be formally replaced by 
$\int_{k\Delta}^{(k+1)\Delta} \left( C(k\Delta,X^\varepsilon_{k\Delta}) - C(k\Delta,\bar{X}_{k\Delta}) \right) ds $,
subject to a sufficiently small $\varepsilon$-correction,
eventually leading to the wanted contraction argument in this case, too.

On the whole, we have justified that,
if $\Delta=\Delta_\varepsilon$ behaves as described in \autoref{choiceDelta},
then
\[
\E \left[ \sup_{\substack{k'=0,\dots,h\\k'\!\Delta \leq \tau^\varepsilon}}
\left| X^\varepsilon_{k'\Delta} - \bar{X}_{k'\Delta} \right|^2 \right] 
\lesssim \;r(\Delta,\varepsilon) +
\sum_{k=0}^{h-1} 
\Delta \E \left[ \sup_{\substack{k'=0,\dots,k\\k'\!\Delta \leq \tau^\varepsilon}}
\left| X^\varepsilon_{k'\Delta} - \bar{X}_{k'\Delta} \right|^2 \right],
\quad h=1,\dots,[T/\Delta],
\]
where $r(\Delta,\varepsilon) \to 0,\,\varepsilon \downarrow 0$,
finally proving \eqref{eq:Esup2}, by Gronwall's lemma.

The proof of \autoref{thm:main}(ii) is thus complete.
\section{Weak convergence} \label{sec:weakWZ}
In this section we prove part (i) of \autoref{thm:main}.
The idea of proof is similar to the one of part (ii), 
except that now $\beta\not=0$ is possible.
It is the existence of this bilinear term which prevents us from proving convergence in probability---we only
succeed in showing convergence in law (see \autoref{rmk:conv}(ii)).

First, we prove weak convergence of the bilinear term.

Second, we prove convergence in law of $X^\varepsilon,\,\varepsilon\downarrow 0$,
using bounds similar to those obtained in \autoref{sec:main}.
\subsection{Weak convergence of the bilinear term}  
For any $\varepsilon>0$, define the process $U^\varepsilon$ by
\begin{equation} \label{eq:Xepstild}
U^\varepsilon_t = \int_0^t \varepsilon \beta(Y^\varepsilon_s,Y^\varepsilon_s) ds,
\quad t\in[0,T],
\end{equation}
where $Y^\varepsilon$ is the stationary Ornstein-Uhlenbeck process introduced in \autoref{realTimeWipro}.
By (A5), the process $U^\varepsilon$ has zero-mean, 
and, using (A3), its second moments,
\[
\E \left[  
\int_0^t \varepsilon 
\underbrace{
\langle\beta(Y^\varepsilon_s,Y^\varepsilon_s) , \mathbf{e}_i \rangle
}_{\beta^i(Y^\varepsilon_s,Y^\varepsilon_s)} ds 
\int_0^t \varepsilon 
\underbrace{
\langle\beta(Y^\varepsilon_r,Y^\varepsilon_r) , \mathbf{e}_j \rangle 
}_{\beta^j(Y^\varepsilon_r,Y^\varepsilon_r)} dr\right],
\]
can be calculated to be
\[
\frac{1}{2} \sum_{\ell, m \in \N} 
\underbrace{
\langle\beta(\mathbf{f}_\ell,\mathbf{f}_m) , \mathbf{e}_i \rangle
}_{\beta^i_{\ell,m} }
\underbrace{
\langle\beta(\mathbf{f}_\ell,\mathbf{f}_m) , \mathbf{e}_j \rangle
}_{\beta^j_{\ell,m} }
q_\ell q_m  \left( t + \frac{\varepsilon^2}{2}\left( e^{-2\varepsilon^{-2}t}-1\right) \right),
\]
for $i,j=1,\dots,d$, and $\ell,m \in \N$.

Recalling \eqref{diffCoeff}, using the above short notation, we also have that
\begin{align*}
b^i_{\ell,m}
= \beta^i_{\ell,m} \sqrt{\frac{q_\ell q_m}{2}},
\quad  i=1,\dots,d,\;\ell,m\in\N.
\end{align*}

Next, since 
$dY^{\varepsilon,\ell}_t = -\varepsilon^{-2}Y^{\varepsilon,\ell}_t dt 
+ \varepsilon^{-2} d\langle W_t , \mathbf{f}_\ell \rangle$, 
It\^o's formula implies
\begin{align*}
Y^{\varepsilon,\ell}_t Y^{\varepsilon,m}_t
=
Y^{\varepsilon,\ell}_0 Y^{\varepsilon,m}_0 
- 2\varepsilon^{-2}\hspace{-5pt} \int_0^t 
Y^{\varepsilon,\ell}_s Y^{\varepsilon,m}_s ds
+\varepsilon^{-2}\hspace{-5pt} \int_0^t
Y^{\varepsilon,\ell}_s d\langle W_s , \mathbf{f}_m \rangle 
+\varepsilon^{-2}\hspace{-5pt} \int_0^t
Y^{\varepsilon,m}_s d\langle W_s , \mathbf{f}_\ell \rangle
+ \frac{t \varepsilon^{-4}}{2} q_\ell \delta_{\ell,m},
\end{align*}
for any $\ell,m\in\N$, and hence
\begin{align*}
U^{\varepsilon,i}_t
=
\int_0^t \varepsilon \sum_{\ell,m \in \N} \beta^i_{\ell,m} Y^{\varepsilon,\ell}_s Y^{\varepsilon,m}_s  ds 
=
&\;\varepsilon
\int_0^t \sum_{\ell,m \in \N} \beta^i_{\ell,m}
Y^{\varepsilon,\ell}_s d\langle W_s , \mathbf{f}_m \rangle\\
&- \frac{\varepsilon^3}{2}
\sum_{\ell,m \in \N} \beta^i_{\ell,m} \left( 
Y^{\varepsilon,\ell}_t Y^{\varepsilon,m}_t
- Y^{\varepsilon,\ell}_0 Y^{\varepsilon,m}_0\right)
\,+\,\frac{\varepsilon^{-1}}{4}\,t\sum_{\ell\in\N}\beta^i_{\ell,\ell}q_\ell
\\
=
&\;M^{\varepsilon,i}_t - \frac{1}{2} V^{\varepsilon,i}_t
\,+\,\frac{\varepsilon^{-1}}{4}\,t\sum_{\ell\in\N}\beta^i_{\ell,\ell}q_\ell,
\end{align*}
where $M^\varepsilon$ is a $d$-dimensional continuous local martingale, 
while the process $V^\varepsilon$ satisfies
\begin{align} \label{eq:V}
\E \left[ \sup_{t \leq T} \left| V^{\varepsilon}_t \right|^p \right]
=
\E \left[ \sup_{t \leq T} \left| \varepsilon^3 
\left( \beta \left( 
Y^{\varepsilon}_t,Y^{\varepsilon}_t\right)
-
\beta \left( 
Y^{\varepsilon}_0,Y^{\varepsilon}_0\right) \right)
 \right|^p \right]
\lesssim
\varepsilon^p,\quad\forall\,p>1,
\end{align}
by combining (A3) and \autoref{lem:ou}.

The above representation of $U^\varepsilon$,
though very simple, has been used in a variety of cases in a fruitful way,
see for instance \cite{Ol94} or \cite{IfPaPi08}.
Observe that, by (A5), the It\^o-correction actually cancels out, being otherwise a contribution of order $\varepsilon^{-1}$.
The process $U^\varepsilon$, nevertheless, has got an interesting limit in law:
\begin{prop} \label{prop:conv}
The couple of processes $(U^\varepsilon,W)$ converges in law, $\varepsilon\downarrow 0$,
to a pair of processes $(\eta,\omega)$, where
$\eta$ is a $d$-dimensional Wiener process with covariance
$(\sum_{\ell,m \in \N} b^i_{\ell,m}b^j_{\ell,m})_{i,j=1}^d$,
and $\omega$ is a $Q$-Wiener process, like $W$. 
Furthermore, $\eta$ and $\omega$ are independent.
\end{prop}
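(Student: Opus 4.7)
The strategy is to reduce the claim to the classical central limit theorem for continuous local martingales, applied to $M^\varepsilon$ as identified just before the statement. Assumption (A5) kills the otherwise $\varepsilon^{-1}$-sized It\^o correction $\tfrac{\varepsilon^{-1}}{4}t\sum_\ell \beta^i_{\ell,\ell}q_\ell$, leaving $U^{\varepsilon,i}_t = M^{\varepsilon,i}_t - \tfrac12 V^{\varepsilon,i}_t$; since \eqref{eq:V} gives $V^\varepsilon \to 0$ uniformly on $[0,T]$ in every $L^p$, it suffices to prove that $(M^\varepsilon, W)$ converges in law to $(\eta, \omega)$.

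The natural tool is the continuous-martingale CLT: a sequence of continuous local martingales whose quadratic (co)variation matrix converges in probability to a deterministic continuous function converges in law to a continuous Gaussian martingale with that covariance. Applied to the pair $\bigl(M^\varepsilon,\,(\langle W_\cdot,\mathbf{f}_n\rangle)_{n\le k}\bigr)$, it reduces the task to three items: tightness of $(M^\varepsilon)$ in $C([0,T];\R^d)$; convergence in probability of $\langle M^{\varepsilon,i}, M^{\varepsilon,j}\rangle_t$ to $t\sum_{\ell, m} b^i_{\ell,m}b^j_{\ell,m}$; and vanishing in probability of the cross variations $\langle M^{\varepsilon,i},\langle W_\cdot,\mathbf{f}_n\rangle\rangle_t$ for all $n$. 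Block-diagonal limiting covariation forces independence of $\eta$ and $\omega$, since continuous Gaussian martingales with zero cross variation are independent; passing from finite-dimensional projections of $W$ to the full $H_\infty$-valued process is then standard.

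The heart of the argument is the ergodic-type limit
\[
\varepsilon^2 \int_0^t Y^{\varepsilon,\ell}_s Y^{\varepsilon,m}_s\, ds \;\xrightarrow[\varepsilon \downarrow 0]{L^2(\Omega)}\; \tfrac{1}{2}\, q_\ell\, \delta_{\ell,m}\, t,
\]
obtained by rearranging the It\^o identity for $Y^{\varepsilon,\ell}_t Y^{\varepsilon,m}_t$ displayed before the statement: once the time integral is isolated and the whole identity is multiplied through by $\varepsilon^2$, the boundary term is $O(\varepsilon^2)$ in $L^2$ (via \autoref{lem:ou}) and the surviving stochastic integrals have $L^2$-norm of order $\varepsilon$. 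Writing the quadratic variation of $M^\varepsilon$ from its stochastic-integral representation as $\varepsilon^2 \int_0^t \sum_m q_m \bigl(\sum_\ell \beta^i_{\ell,m} Y^{\varepsilon,\ell}_s\bigr)\bigl(\sum_{\ell'} \beta^j_{\ell',m} Y^{\varepsilon,\ell'}_s\bigr) ds$ and summing the ergodic limit against $q_m \beta^i_{\ell,m}\beta^j_{\ell',m}$ produces exactly $t\sum_{\ell,m}b^i_{\ell,m}b^j_{\ell,m}$. The cross variation with $\langle W,\mathbf{f}_n\rangle$ reduces, via $W^{\varepsilon,\ell}_t=\int_0^t Y^{\varepsilon,\ell}_s ds$ and \eqref{eq:Weps}, to $\varepsilon q_n \sum_\ell \beta^i_{\ell,n} W^{\varepsilon,\ell}_t$, manifestly $O(\varepsilon)$ in $L^2$. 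Tightness of $M^\varepsilon$ follows from BDG and Kolmogorov via the same uniform-in-$\varepsilon$ quadratic variation bound.

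The main technical obstacle is justifying the exchange of limit and infinite sum over $(\ell, m)$ in $\langle M^{\varepsilon,i}, M^{\varepsilon,j}\rangle_t$: one needs a $\PP$-integrable, $\varepsilon$-uniform dominant for the individual time integrals so that the pointwise ergodic limits can be summed against the coefficients $q_\ell q_m \beta^i_{\ell,m}\beta^j_{\ell',m}$. The trace-class property of $Q$ combined with the bilinear continuity of $\beta$ from (A3) should supply this bound, closing the argument by dominated convergence.
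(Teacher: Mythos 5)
Your proposal is correct and follows the same overall architecture as the paper: reduce to $(M^\varepsilon,W)$ using (A5) and \eqref{eq:V}, then invoke the functional CLT for continuous local martingales (the paper cites \cite[Chapter VII, Theorem 1.4]{EtKu86}) by showing that $[M^{\varepsilon,i},M^{\varepsilon,j}]_t$ converges to $t\sum_{\ell,m}b^i_{\ell,m}b^j_{\ell,m}$ and that the cross brackets $[M^{\varepsilon,i},\langle W,\mathbf{f}_n\rangle]_t$ vanish, the block-diagonal limit covariance yielding independence of $\eta$ and $\omega$; your treatment of the cross bracket via $\varepsilon\int_0^t Y^\varepsilon_s\,ds=\varepsilon W_t-\varepsilon^3(Y^\varepsilon_t-Y^\varepsilon_0)$ is literally the paper's argument. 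The only genuine divergence is how the bracket convergence is verified: the paper computes $\E\bigl[([M^{\varepsilon,i},M^{\varepsilon,j}]_t-t\sum b^ib^j)^2\bigr]$ directly, using the Isserlis--Wick formula for the fourth moments of the stationary Gaussian coordinates, where all index sums come with paired weights $q_\ell q_{\ell'}q_mq_{\underline{m}}$ and are trivially absolutely convergent; you instead extract the term-by-term ergodic limit $\varepsilon^2\int_0^t Y^{\varepsilon,\ell}_sY^{\varepsilon,m}_s\,ds\to\tfrac{t}{2}q_\ell\delta_{\ell,m}$ in $L^2$ from the It\^o identity and then sum, which puts the weight of the proof on the dominated-convergence step you flag at the end. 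That step does close, but not merely from ``$Q$ trace class plus $|\beta^i_{\ell,m}|\le\|\beta\|$'': the dominant is of size $q_m|\beta^i_{\ell,m}||\beta^j_{\ell',m}|\sqrt{q_\ell q_{\ell'}}$, and $\sum_\ell|\beta^i_{\ell,m}|\sqrt{q_\ell}$ is not controlled by boundedness of the entries alone; you need that, for each fixed $m$ and $i$, $y\mapsto\langle\beta(y,\mathbf{f}_m),\mathbf{e}_i\rangle$ is a bounded functional on $H_\infty$, so $\sum_\ell(\beta^i_{\ell,m})^2\le\|\beta\|^2$, whence Cauchy--Schwarz gives $\sum_\ell|\beta^i_{\ell,m}|\sqrt{q_\ell}\le\|\beta\|(\operatorname{tr}Q)^{1/2}$ and the triple sum is finite uniformly in $\varepsilon$. (Alternatively you can avoid the resummation issue entirely by running your It\^o/ergodic argument on the scalar processes $\beta^i(Y^\varepsilon_s,\mathbf{f}_m)$, leaving only the single $m$-sum weighted by $q_m$.) With that detail made explicit, your route is a valid, essentially equivalent substitute for the paper's Wick computation; the paper's version buys absolute convergence for free and a quantitative $O(\varepsilon^2)$ rate, yours isolates the ergodic mechanism more transparently.
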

\begin{proof}
First, by \eqref{eq:V}, it is sufficient to prove the proposition for $(M^\varepsilon,W)$
instead of $(U^\varepsilon,W)$. 

Since all components of the processes $M^\varepsilon,\,\varepsilon>0$, and of $W$,
are continuous local martingales, the distributional properties of the limit $(\eta,\omega)$
would follow from \cite[Chapter VII, Theorem 1.4]{EtKu86}, if
\begin{align*}
\E \left[ 
\left( 
\left[ M^{\varepsilon,i} , M^{\varepsilon,j} \right]_{t} - t \sum_{\ell,m \in \N} 
b^i_{\ell,m}b^j_{\ell,m}
\right)^2
\right] \to 0,\quad\varepsilon\downarrow 0, \numberthis\label{toShow}
\end{align*}
for each $t \in [0,T]$, and $i,j =1,\dots, d$, as well as
\[
\E\left[
(\left[ M^{\varepsilon,i} , \langle W , \mathbf{f}_m \rangle \right]_{t}\,)^2
\right]\to 0,\quad\varepsilon\downarrow 0, 
\]
for each $t\in[0,T],\,i=1,\dots,d$, and $m\in\N$.

First, fix $t \in [0,T]$, as well as $i,j =1,\dots, d$.
Then,
the quadratic covariation $\left[ M^{\varepsilon,i} , M^{\varepsilon,j} \right]_{t}$ is given by
\begin{align*}
\left[ M^{\varepsilon,i} , M^{\varepsilon,j} \right]_{t} 
=
\varepsilon^2 
\int_0^t 
\sum_{m \in \N}
\sum_{\ell,\ell' \in \N}
\beta^i_{\ell,m} \beta^j_{\ell',m} q_m  
Y^{\varepsilon,\ell}_s Y^{\varepsilon,\ell'}_s ds,
\end{align*}
so that
\begin{align*}
\E& \left[ 
\left( 
\left[ M^{\varepsilon,i} , M^{\varepsilon,j} \right]_{t} - t \sum_{\ell,m \in \N} 
b^i_{\ell,m}b^j_{\ell,m}
\right)^2
\right]
\\ &\quad= 
\varepsilon^4
\iint_0^t 
\sum_{m,\underline{m}\in \N}
\sum_{\substack{\ell,\ell' \in \N\\ \underline{\ell} ,\underline{\ell}' \in \N}}
\beta^i_{\ell,m} 
\beta^j_{\ell',m} 
\beta^i_{\underline{\ell},\underline{m}} \beta^j_{\underline{\ell}',\underline{m}}
q_m q_{\underline{m}} \E \left[
Y^{\varepsilon,\ell}_s 
Y^{\varepsilon,\ell'}_s 
Y^{\varepsilon,\underline{\ell}}_r 
Y^{\varepsilon,\underline{\ell}'}_r \right]
ds dr
\\
&\quad\quad-2
\varepsilon^2 
\int_0^t 
\sum_{m \in \N}
\sum_{\ell,\ell' \in \N}
\beta^i_{\ell,m} \beta^j_{\ell',m} q_m  \E \left[
Y^{\varepsilon,\ell}_s Y^{\varepsilon,\ell'}_s \right] ds
\left( t \sum_{\ell,m \in \N} b^i_{\ell,m} b^j_{\ell,m}\right)
+
\left( t \sum_{\ell,m \in \N} b^i_{\ell,m} b^j_{\ell,m}\right)^2.
\end{align*}

Now, using that one can easily calculate
$\E \left[
Y^{\varepsilon,\ell}_s Y^{\varepsilon,\ell'}_s \right] 
= 
\frac{\varepsilon^{-2}}{2} q_\ell \delta_{\ell,\ell'}$,
it follows from Isserlis-Wick's theorem, see \cite[Theorem 1.28]{Ja97}, that
\begin{align*}
\E \left[
Y^{\varepsilon,\ell}_s 
Y^{\varepsilon,\ell'}_s 
Y^{\varepsilon,\underline{\ell}}_r 
Y^{\varepsilon,\underline{\ell}'}_r \right]
&= \frac{\varepsilon^{-4}}{4}
\left( 
q_\ell q_{\underline{\ell}} 
\delta_{\ell,\ell'}
\delta_{\underline{\ell},\underline{\ell}'}
+ q_\ell q_{\ell'} e^{-2\varepsilon^{-2}|s-r|}
\left( 
\delta_{\ell,\underline{\ell}}
\delta_{\ell',\underline{\ell}'} 
+\delta_{\ell,\underline{\ell}'}
\delta_{\ell',\underline{\ell}}
\right) \right),
\end{align*}
which yields
\begin{align*}
\E \left[ 
\left( 
\left[ M^{\varepsilon,i} , M^{\varepsilon,j} \right]_{t} - t \sum_{\ell,m \in \N}
b^i_{\ell,m}b^j_{\ell,m}
\right)^2
\right]
=
\left( t \sum_{\ell,m \in \N} b^i_{\ell,m} b^j_{\ell,m} 
-t \sum_{\ell,m \in \N} b^i_{\ell,m} b^j_{\ell,m}\right)^2 + O(\varepsilon^2)
\lesssim
\varepsilon^2,
\end{align*}
proving \eqref{toShow}.

Second, fix $t\in[0,T]$, as well as $i=1,\dots,d,\,m\in\N$. Then,
\[
\left[M^{\varepsilon,i} , \langle W , \mathbf{f}_m \rangle\right]_t
\,=\,
\int_0^{t}\beta^i(\varepsilon Y^\varepsilon_s,Q\mathbf{f}_m)\,ds,
\]
where, using \autoref{lem:ou},
\[
\E\left[
|\!\int_0^t\!\beta^i(\varepsilon Y^\varepsilon_s,Q\mathbf{f}_m)\,ds\,|^2
\right]
=\;
\E\left[
|\,\beta^i(\varepsilon\!\int_0^t\!Y^\varepsilon_s ds\,,Q\mathbf{f}_m)\,|^2
\right]
\lesssim\;
\E\left[\rule{0pt}{12pt}\right.
|\hspace{-8pt}
\overbrace{\varepsilon\!\int_0^t\!Y^\varepsilon_s ds}^{
\varepsilon W_t-\varepsilon^3(Y^\varepsilon_t-Y^\varepsilon_0)}
\hspace{-8pt}|^2\,q_m^2
\left.\rule{0pt}{12pt}\right]
\stackrel{\varepsilon\downarrow 0}{\longrightarrow}
\;0,
\]
finishing the proof of the proposition.
\end{proof}
\begin{rmk} \label{rmk:conv}
(i) 
Of course, a $d$-dimensional Wiener process with covariance
$(\sum_{\ell,m \in \N} b^i_{\ell,m}b^j_{\ell,m})_{i,j=1}^d$
can always be represented  by
$\sum_{\ell,m \in \N} b_{\ell,m} \bar{W}^{\ell,m}$, 
where $\{\bar{W}^{\ell,m}\}_{\ell,m \in \N}$ is a family of independent 
one-dimensional standard Wiener processes.

(ii)
We would like to stress that we do not expect a much stronger convergence of $U^\varepsilon$,
when $\varepsilon\downarrow 0$,
as the one stated in the above proposition. Indeed, it turns out to be that
the sequence $\{M^{\varepsilon}\}_{\varepsilon>0}$ 
is not even a Cauchy sequence in $L^2(\Omega;\R^d)$. 
To see this, for fixed $0<\varepsilon<\underline{\varepsilon}$, and some $1\le i\le d$, consider
\begin{align*}
\mathbb{E} \left[
\sup_{t \leq T} \left|
M^{\varepsilon,i}_t - 
M^{\underline{\varepsilon},i}_t 
\right|^2 \right] 
&=
\mathbb{E} \left[
\sup_{t \leq T} \left|
\int_0^t \sum_{\ell,m \in \N} \beta^i_{\ell,m}
\left( 
\varepsilon Y^{\varepsilon,\ell}_s -
\underline{\varepsilon} Y^{\underline{\varepsilon},\ell}_s \right)
d\langle  W_s , \mathbf{f}_m \rangle
\right|^2 \right].
\end{align*}
But, by Burkholder-Davis-Gundy's inequality, the above expectation can be bound from below by
\begin{align*}
\mathbb{E} \left[
\int_0^T \sum_{m \in \N} \left( 
\sum_{\ell \in \N} \beta^i_{\ell,m} 
\left( 
\varepsilon Y^{\varepsilon,\ell}_s -
\underline{\varepsilon} Y^{\underline{\varepsilon},\ell}_s \right) \right)^2 q_m 
ds \right]
&=
T \sum_{\ell,m \in \N} (\beta^i_{\ell,m})^2 q_\ell q_m
\left( 1 - \frac{2\varepsilon^{-1}\underline{\varepsilon}^{-1}}{\varepsilon^{-2}+\underline{\varepsilon}^{-2}} \right),
\end{align*}
where
\begin{align*}
\lim_{\varepsilon \to 0} \left( 1 - \frac{2\varepsilon^{-1}\underline{\varepsilon}^{-1}}{\varepsilon^{-2}+\underline{\varepsilon}^{-2}} \right) = 1,
\quad\mbox{for every fixed $\underline{\varepsilon}>0$},
\end{align*}
so that $\{M^{\varepsilon,i}\}_{\varepsilon>0}$ cannot be Cauchy in $L^2(\Omega)$. 
\end{rmk}
\subsection{Weak convergence of solutions}
We now prove $X^\varepsilon\to\bar{X}$, in law, when $\varepsilon\downarrow 0$.

First, for each $\varepsilon>0$, let $\hat{X}^\varepsilon$ be the solution  of 
\begin{align*}
\hat{X}^\varepsilon_t = x_0 + \int_0^t \left( F(s,\hat{X}^\varepsilon_s) + C(s,\hat{X}^\varepsilon_s)\right) ds + \int_0^t \sigma(s,\hat{X}^\varepsilon_s ) dW_s + U^\varepsilon_t,
\quad t\in[0,T], \numberthis\label{auxEqu}
\end{align*}
where $U^\varepsilon$ is given by \eqref{eq:Xepstild},
and let 
$\tau^\varepsilon_R = \inf \{t\geq 0 : |X^\varepsilon_t|\geq R \} \wedge \inf \{t\geq 0 : |\hat{X}^\varepsilon_t|\geq R \} $.

Note that, if (A4),
then the coefficients $F,C,\sigma,\beta$ must have properties 
such that each of the above equations admits global solutions on $[0,T]$, too.

Next, taking into account
$\E \left[ \left| \varepsilon \beta(Y^\varepsilon_s,Y^\varepsilon_s) \right|^p \right] \lesssim \varepsilon^{-p}$
as well as
\begin{align*} 
\E \left[ |{M}^\varepsilon_{(k+1)\Delta\wedge\tau^\varepsilon_R} - {M}^\varepsilon_{k\Delta\wedge\tau^\varepsilon_R}|^p \right] 
\lesssim
\E \left[\left| \int_{k\Delta\wedge\tau^\varepsilon_R}^{(k+1)\Delta\wedge\tau^\varepsilon_R}
\sum_{m \in \N} \left( \sum_{\ell \in \N}
\beta_{\ell,m} \varepsilon Y^{\varepsilon,\ell}_s \right)^2 q_m ds \right|^{p/2}\right]
\lesssim
\Delta^{p/2},
\end{align*}
it can easily be verified that 
\autoref{lem:discrXeps} \&\ \autoref{lem:discrXepsbis} would still be valid, despite $\beta\not=0$,
on the one hand,
and that the following versions 
\[
\E \left[ \sup_{t \leq \tau,\,t+k\Delta \leq T\wedge\tau^\varepsilon_R}|\hat{X}^\varepsilon_{t+k\Delta} - \hat{X}^\varepsilon_{k\Delta}|^p \right] \lesssim \tau^{\frac{p}{2}} + \left( \frac{\tau}{\varepsilon}\right)^p, 
\quad p>1,\,\tau\in(0,1),\,k\in\{0,1,\dots,[T/\Delta]\},
\]
and
\[
\E \left[ \sup_{\substack{k=0,1,\dots,[T/\Delta] \\t \leq \Delta ,\, t+k\Delta \leq T\wedge\tau^\varepsilon_R }} |\hat{X}^\varepsilon_{t+k\Delta} - \hat{X}^\varepsilon_{k\Delta}|^p \right] \lesssim \Delta^{\frac{p}{2}-1} 
+  \frac{\Delta^{p-1}}{\varepsilon^p},
\quad p>1,
\]
of  \autoref{lem:discrXbis} \&\ \autoref{lem:discrX}, respectively,
would hold true when replacing $\bar{X}$ by $\hat{X}^\varepsilon$, on the other.

Therefore, when expanding
$X^\varepsilon$ and $\hat{X}^\varepsilon$ as in \eqref{eq:incXeps} \&\ \eqref{eq:incX},
but including the $\beta$-term,
and then arguing as in the proof of \autoref{thm:main}(ii) in \autoref{sec:main}, 
it would immediately follow  that
$X^\varepsilon_{\cdot \wedge \tau^\varepsilon_R} - \hat{X}^\varepsilon_{\cdot \wedge \tau^\varepsilon_R} \to 0$, 
in probability, $\varepsilon \downarrow 0$, for any $R>0$,
once the following lemma is also available.
\begin{lem}
Assume that $\Delta=\Delta_\varepsilon$ behaves as described in \autoref{choiceDelta}.
Then,
\begin{align*}
\E \left[ 
\sup_{\substack{h=1,\dots,[T/\Delta] \\ h\Delta \leq \tau^\varepsilon_R}}
\left| \sum_{k=0}^{h-1}
\int_{k\Delta}^{(k+1)\Delta} \left( \int_{k\Delta}^s D\sigma(r,X^\varepsilon_r) \varepsilon \beta(Y^\varepsilon_r,Y^\varepsilon_r) dr \right) dW^\varepsilon_s
\right|^2\right]
\to 0,\quad\varepsilon\downarrow 0.
\end{align*}
\end{lem}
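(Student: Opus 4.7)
The plan is to exploit, as in \autoref{sec:weakWZ}, the martingale representation
\[
\int_{k\Delta}^s \varepsilon\beta(Y^\varepsilon_r,Y^\varepsilon_r)\,dr \;=\; (M^\varepsilon_s - M^\varepsilon_{k\Delta}) - \tfrac12(V^\varepsilon_s - V^\varepsilon_{k\Delta}),
\]
where, by the quadratic-variation computation in the proof of \autoref{prop:conv} and Burkholder-Davis-Gundy's inequality, $\mathbb{E}[\sup_{s\in[k\Delta,(k+1)\Delta]}|M^\varepsilon_s - M^\varepsilon_{k\Delta}|^2]\lesssim\Delta$, while $\|V^\varepsilon\|_\infty\lesssim\varepsilon$ by \eqref{eq:V}. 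This cancellation is essential: using only the pointwise bound $|\varepsilon\beta(Y^\varepsilon,Y^\varepsilon)|\lesssim\varepsilon|Y^\varepsilon|^2$ together with $\sup|Y^\varepsilon|\lesssim\varepsilon^{-1}$ would produce a contribution of order $\Delta^2/\varepsilon^4 = 1/\omega_\sigma(\Delta)$ after summation, which diverges under \autoref{choiceDelta}.

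First I would freeze $D\sigma(r,X^\varepsilon_r)$ at $(k\Delta,X^\varepsilon_{k\Delta})$ in the inner integral, writing
\[
\int_{k\Delta}^s D\sigma(r,X^\varepsilon_r)\varepsilon\beta(Y^\varepsilon_r,Y^\varepsilon_r)\,dr \;=\; D\sigma(k\Delta,X^\varepsilon_{k\Delta})\bigl(U^\varepsilon_s - U^\varepsilon_{k\Delta}\bigr) \;+\; R^k(s),
\]
with the remainder $R^k$ controlled by the modulus-of-continuity factor $\omega_\sigma(\Delta) + C|X^\varepsilon_r - X^\varepsilon_{k\Delta}|$ coming from (A2). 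Using \autoref{lem:ou}, the variant of \autoref{lem:discrXeps} that remains valid with the $\beta$-term (as observed right before the lemma), H\"older's inequality and the bound $\mathbb{E}|W^\varepsilon_{(k+1)\Delta}-W^\varepsilon_{k\Delta}|^p\lesssim \Delta^{p/2}+\varepsilon^p$, one checks that $\sum_k\int_{k\Delta}^{(k+1)\Delta} R^k(s)\,dW^\varepsilon_s$ contributes in $L^2$ at most of order $\omega_\sigma(\Delta)\Delta/\varepsilon^2 + \Delta^2/\varepsilon^3$, both vanishing under \autoref{choiceDelta}.

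The surviving frozen term $\sum_k D\sigma(k\Delta,X^\varepsilon_{k\Delta})\int_{k\Delta}^{(k+1)\Delta}(U^\varepsilon_s - U^\varepsilon_{k\Delta})\,dW^\varepsilon_s$ I would further split via $U^\varepsilon = M^\varepsilon - V^\varepsilon/2$. For the $V^\varepsilon$-part, classical integration by parts (legitimate because $W^\varepsilon_s=\int_0^s Y^\varepsilon_r\,dr$ has BV paths, so the cross-variation with the semimartingale $V^\varepsilon$ vanishes) converts the inner stochastic integral into a boundary term $(V^\varepsilon_{(k+1)\Delta}-V^\varepsilon_{k\Delta})(W^\varepsilon_{(k+1)\Delta}-W^\varepsilon_{k\Delta})$ plus a volume integral $\int(W^\varepsilon_s - W^\varepsilon_{k\Delta})\,dV^\varepsilon_s$; both are controlled in $L^2$ using $\|V^\varepsilon\|_\infty\lesssim\varepsilon$, Doob's inequality applied to the martingale part of $V^\varepsilon$, and the $\ell^2$-type estimate $\sum_k \mathbb{E}|W^\varepsilon_{(k+1)\Delta}-W^\varepsilon_{k\Delta}|^2\lesssim T + T\varepsilon^2/\Delta$.

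The main obstacle lies in the $M^\varepsilon$-part, where a plain Cauchy--Schwarz in $k$ would lose an unacceptable factor $T/\Delta$ relative to the desired $\sqrt{\sum_k\,\cdot\,}$ scaling. Following the strategy of \autoref{lem:M}--\autoref{lem:N}, I would compute the conditional expectation of $\int_{k\Delta}^{(k+1)\Delta}(M^\varepsilon_s - M^\varepsilon_{k\Delta})\,dW^\varepsilon_s$ given $\mathcal{F}_{k\Delta}$, exploiting the stationary Markov structure of $Y^\varepsilon$ and the explicit representation $dM^{\varepsilon,i} = \varepsilon\sum_{\ell,m}\beta^i_{\ell,m}Y^{\varepsilon,\ell}\,d\langle W,\mathbf{f}_m\rangle$. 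This decomposes the sum into a discrete martingale with orthogonal increments over $(\mathcal{F}_{k\Delta})$, whose $L^2$-norm is controlled by Doob's maximal inequality applied to the sum of second moments (each of order $\Delta(\Delta+\varepsilon^2)$), plus a residual coming from the conditional mean that can be computed explicitly and bounded by $O(\varepsilon^2/\Delta)$ after summation. All contributions then vanish under $\Delta\sqrt{\omega_\sigma(\Delta)}=\varepsilon^2$, concluding the proof.
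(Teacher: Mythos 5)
Your first step (freezing $D\sigma$ at $(k\Delta,X^\varepsilon_{k\Delta})$ and bounding the remainder by $\omega_\sigma(\Delta)\Delta/\varepsilon^2+\Delta^2/\varepsilon^3$) and your treatment of the $M^\varepsilon$-part (conditional expectation given $\mathcal{F}_{k\Delta}$, discrete martingale with Doob, explicit residual of order $\varepsilon^2/\Delta$) coincide in substance with the paper's argument. The genuine gap is in the $V^\varepsilon$-part of your decomposition $U^\varepsilon=M^\varepsilon-\tfrac12 V^\varepsilon$. The bound \eqref{eq:V} gives smallness of $V^\varepsilon$ only in sup-norm, not in variation: by It\^o (and (A5), which kills the $\varepsilon^{-1}$-correction) one has $dV^\varepsilon_s=2\,dM^\varepsilon_s-2\varepsilon\beta(Y^\varepsilon_s,Y^\varepsilon_s)\,ds$, so the finite-variation part of $V^\varepsilon$ has total variation of order $\varepsilon^{-1}$ on $[0,T]$. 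Hence, after your integration by parts, the volume term $\int_{k\Delta}^{(k+1)\Delta}(W^\varepsilon_s-W^\varepsilon_{k\Delta})\,dV^\varepsilon_s$ contains $-2\int_{k\Delta}^{(k+1)\Delta}(W^\varepsilon_s-W^\varepsilon_{k\Delta})\,\varepsilon\beta(Y^\varepsilon_s,Y^\varepsilon_s)\,ds$, a sum over blocks of iterated integrals of triple products of $Y^\varepsilon$ of exactly the same structure and size as the term you set out to estimate: crude bounds (via \autoref{lem:ou} and the moment bound on $W^\varepsilon$-increments) give only $O\!\left(T+T\sqrt{\Delta}/\varepsilon\right)$, which does not vanish under \autoref{choiceDelta} since $\Delta/\varepsilon^2\to\infty$; and integrating by parts once more just reproduces the original frozen term (the manipulation is circular, as $\varepsilon\beta(Y^\varepsilon,Y^\varepsilon)\,ds=dU^\varepsilon$). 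None of the tools you invoke for this part ($\sup$-norm smallness of $V^\varepsilon$, Doob for the martingale part of $V^\varepsilon$, the $\ell^2$ bound on $W^\varepsilon$-increments) controls it; you would have to run the same conditional-expectation/discrete-martingale argument on this leftover term as well.

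This is precisely why the paper does not pass through the $M^\varepsilon/V^\varepsilon$ split here: it applies the \autoref{lem:M}--\autoref{lem:N} strategy directly to the frozen term, computing $\E\big[Y^{\varepsilon,\ell}_rY^{\varepsilon,m}_rY^{\varepsilon,n}_s\mid\mathcal{F}_{k\Delta}\big]$ explicitly for the quantities $c^k_{\ell,m,n}(\Delta,\varepsilon)$ and splitting into a discrete $(\mathcal{F}_{h\Delta})$-martingale (of order $\Delta^3/\varepsilon^4$) plus an explicitly computable remainder (of order $(\varepsilon^2/\Delta)^2$). That single computation handles in one stroke what in your decomposition is spread over the $M^\varepsilon$-part and the problematic drift part of $V^\varepsilon$; to repair your proof, either drop the $U^\varepsilon=M^\varepsilon-\tfrac12V^\varepsilon$ detour and argue on the triple products directly, or supply the analogous conditional-moment estimate for the term $\sum_k D\sigma(k\Delta,X^\varepsilon_{k\Delta})\int_{k\Delta}^{(k+1)\Delta}(W^\varepsilon_s-W^\varepsilon_{k\Delta})\,\varepsilon\beta(Y^\varepsilon_s,Y^\varepsilon_s)\,ds$.
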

\begin{proof}
To start with, write
\begin{align*}
\int_{k\Delta}^{(k+1)\Delta} &\left( \int_{k\Delta}^s D\sigma(r,X^\varepsilon_r) \varepsilon \beta(Y^\varepsilon_r,Y^\varepsilon_r) dr \right) dW^\varepsilon_s \\
=\,
&\int_{k\Delta}^{(k+1)\Delta} \left( \int_{k\Delta}^s \left(
D\sigma(r,X^\varepsilon_r) \varepsilon \beta(Y^\varepsilon_r,Y^\varepsilon_r) - D\sigma({k\Delta},X^\varepsilon_{k\Delta}) \varepsilon \beta(Y^\varepsilon_r,Y^\varepsilon_r) \right) dr \right) dW^\varepsilon_s \\
&+ 
\int_{k\Delta}^{(k+1)\Delta} \left( \int_{k\Delta}^s  D\sigma({k\Delta},X^\varepsilon_{k\Delta}) \varepsilon \beta(Y^\varepsilon_r,Y^\varepsilon_r)
dr \right) dW^\varepsilon_s,
\end{align*}
which creates two summands, for any fixed $0\le k\le[T/\Delta]-1$.

We estimate the impact of each summand separately.

First, using $|D\sigma(r,X^\varepsilon_r)-D\sigma({k\Delta},X^\varepsilon_{k\Delta})| \lesssim |X^\varepsilon_r-X^\varepsilon_{k\Delta}| + \omega_\sigma(\Delta)$, we obtain that
\begin{align*}
&\E \left[ 
\sup_{\substack{h=1,\dots,[T/\Delta] \\ h\Delta \leq \tau^\varepsilon_R}}
\left| \sum_{k=0}^{h-1}
\int_{k\Delta}^{(k+1)\Delta} 
\left( \int_{k\Delta}^s 
\left(
D\sigma(r,X^\varepsilon_r) \varepsilon \beta(Y^\varepsilon_r,Y^\varepsilon_r) - D\sigma({k\Delta},X^\varepsilon_{k\Delta}) \varepsilon \beta(Y^\varepsilon_r,Y^\varepsilon_r) \right) dr \right) dW^\varepsilon_s 
\right|^2\right]
\\
&\lesssim
\varepsilon^{-4}
\E \left[ 
\sup_{\substack{h=1,\dots,[T/\Delta] \\ h\Delta \leq \tau^\varepsilon_R}}
\left| \sum_{k=0}^{h-1}
\int_{k\Delta}^{(k+1)\Delta} 
\left( \int_{k\Delta}^s 
\left(
|X^\varepsilon_r-X^\varepsilon_{k\Delta}| + \omega_\sigma(\Delta) \right) dr \right) 
ds 
\right|^2 \right]
\\
&\lesssim
\varepsilon^{-4}
\E \left[ 
 \sum_{k=0}^{\lceil T \wedge \tau^\varepsilon_R\rceil / \Delta-1}
\int_{k\Delta}^{(k+1)\Delta} \left|
\int_{k\Delta}^s 
\left(
|X^\varepsilon_r-X^\varepsilon_{k\Delta}| + \omega_\sigma(\Delta) \right) dr 
\right|^2 ds 
 \right]
\\
&\lesssim
\varepsilon^{-4} 
 \sum_{k=0}^{[T / \Delta]-1}
\int_{k\Delta}^{(k+1)\Delta}
(s-k\Delta)
\int_{k\Delta}^s 
\left(
\E \left[|X^\varepsilon_{r\wedge \tau^\varepsilon_R}-X^\varepsilon_{k\Delta\wedge \tau^\varepsilon_R}|^2\right] + \omega_\sigma(\Delta)^2 \right) dr  ds
\lesssim \left( \frac{\Delta^2}{\varepsilon^3}\right)^2 + \left( \frac{\Delta}{\varepsilon^2}\right)^2 \omega_\sigma(\Delta)^2.
\end{align*}

Second, we approach 
\begin{equation}\label{approached}
\sum_{k=0}^{h-1}
\int_{k\Delta}^{(k+1)\Delta} \left( \int_{k\Delta}^s  D\sigma({k\Delta},X^\varepsilon_{k\Delta}) \varepsilon \beta(Y^\varepsilon_r,Y^\varepsilon_r)
dr \right) dW^\varepsilon_s
\end{equation}
following the method used when
discussing the 2nd sum on the right-hand side of \eqref{contribute}
in the proof of \autoref{thm:main}(ii), but now for triple moments of $Y^\varepsilon$.

Indeed, define
\begin{align*}
c^k_{\ell,m,n}(\Delta,\varepsilon)
=
\int_{k\Delta}^{(k+1)\Delta} \left( \int_{k\Delta}^s   Y^{\varepsilon,\ell}_r Y^{\varepsilon,m}_r 
dr \right) Y^{\varepsilon,n}_s ds,
\end{align*}
and take the conditional expectation with respect to $\mathcal{F}_{k\Delta}$, that is
\begin{align*}
\E \left[ c^k_{\ell,m,n}(\Delta,\varepsilon)
\mid \mathcal{F}_{k\Delta}\right] 
=
\int_{k\Delta}^{(k+1)\Delta} \left( \int_{k\Delta}^s   \E \left[ Y^{\varepsilon,\ell}_r Y^{\varepsilon,m}_r 
Y^{\varepsilon,n}_s \mid \mathcal{F}_{k\Delta}\right] dr \right)  ds.
\end{align*}
Since
\begin{align*}
\E \left[ Y^{\varepsilon,\ell}_r Y^{\varepsilon,m}_r 
Y^{\varepsilon,n}_s \mid \mathcal{F}_{k\Delta}\right]
=\,
&Y^{\varepsilon,\ell}_{k\Delta} 
Y^{\varepsilon,m}_{k\Delta} 
Y^{\varepsilon,n}_{k\Delta}
e^{-\varepsilon^{-2}(s+2r-3k\Delta)} \\
&+
\left( 
Y^{\varepsilon,\ell}_{k\Delta} \delta_{m,n} q_n+
Y^{\varepsilon,m}_{k\Delta} \delta_{\ell,n} q_n+
Y^{\varepsilon,n}_{k\Delta} \delta_{\ell,m} q_\ell\right)
\frac{\varepsilon^{-2}}{2}
\left( e^{-\varepsilon^{-2}(s-k\Delta)} - e^{-\varepsilon^{-2}(s+2r-3k\Delta)}\right),
\end{align*}
we have that
\begin{align*}
\E \left[ c^k_{\ell,m,n}(\Delta,\varepsilon)
\mid \mathcal{F}_{k\Delta}\right] 
=\,
&Y^{\varepsilon,\ell}_{k\Delta} 
Y^{\varepsilon,m}_{k\Delta} 
Y^{\varepsilon,n}_{k\Delta}
\frac{\varepsilon^4}{2}
\left( 1 - e^{-\varepsilon^{-2} \Delta} - \frac{1}{3} + \frac{1}{3} e^{-3\varepsilon^{-2}\Delta} \right)
\\
&+
\left( 
Y^{\varepsilon,\ell}_{k\Delta} \delta_{m,n} q_n+
Y^{\varepsilon,m}_{k\Delta} \delta_{\ell,n} q_n+
Y^{\varepsilon,n}_{k\Delta} \delta_{\ell,m} q_\ell\right)
\\
& \quad \times
\frac{\varepsilon^{2}}{2}
\left( 
\frac{\Delta}{\varepsilon^2}e^{-\varepsilon^{-2}\Delta}
+ \frac{1}{2} - \frac{1}{2} e^{-\varepsilon^{-2}\Delta} 
+ \frac{1}{6} - \frac{1}{6} e^{-3\varepsilon^{-2}\Delta}
\right).
\end{align*}

Next, for each $i=1,\dots,d$, the process
$M^i_h,\,h=1,\dots,[T/\Delta]$, given by 
\begin{align*}
M^i_h
=
\sum_{k=0}^{h-1}
\sum_{\ell,m,n \in \N}
\sum_{j =1,\dots,d}
D_j \sigma^{i,n}(k\Delta,X^\varepsilon_{\tau^\varepsilon_R\wedge k\Delta})
\varepsilon \beta^j_{\ell,m} \left( c^k_{\ell,m,n}(\Delta,\varepsilon)-\E \left[ c^k_{\ell,m,n}(\Delta,\varepsilon)
\mid \mathcal{F}_{k\Delta}\right] \right),
\end{align*}
is a martingale with respect to the filtration $(\mathcal{F}_{h\Delta})_{h=1}^{[T/\Delta]}$, 
and arguing as in the proof of \autoref{lem:M} yields
\begin{align*}
\E \left[ 
\sup_{\substack{h=1,\dots,[T/\Delta] \\ h\Delta \leq \tau^\varepsilon_R}}
\left| M^i_h
\right|^2\right]
\lesssim
\frac{\Delta^3}{\varepsilon^4},
\quad i=1,\dots,d.
\end{align*}

So, it remains to prove that the remainder,
after  subtracting the martingale term $M_h$ from \eqref{approached},
also vanishes, when $\varepsilon\downarrow 0$.
For $i=1,\dots,d$, the $i$th coordinate of this remainder reads
\begin{align*}
N^i_h
=
\sum_{k=0}^{h-1}
\sum_{\ell,m,n \in \N}
\sum_{j =1,\dots,d}
D_j \sigma^{i,n}(k\Delta,X^\varepsilon_{k\Delta})
\varepsilon B^j_{\ell,m} 
\E \left[ c^k_{\ell,m,n}(\Delta,\varepsilon)
\mid \mathcal{F}_{k\Delta}\right],
\end{align*}
and we can easily calculate the below bound,
\begin{align*}
\E \left[  
\sup_{\substack{h=1,\dots,T/\Delta \\ h\Delta \leq \tau^\varepsilon_R}}
\left| N^i_h
\right|^2 \right]
\lesssim
\Delta^{-1}
\sum_{k=0}^{T / \Delta - 1} 
\E \left[  \left| \varepsilon
\E \left[ c^k_{\ell,m,n}(\Delta,\varepsilon)
\mid \mathcal{F}_{k\Delta}\right]
\right|^2 \right]
\lesssim \left( \frac{\varepsilon^2}{\Delta}\right)^2,
\end{align*}
finishing the proof of the lemma.
\end{proof}
\begin{cor}
For any $R>0$,
if $\Delta=\Delta_\varepsilon$ behaves as described in \autoref{choiceDelta},
\[
\E \left[ \sup_{t \leq T\wedge\tau^\varepsilon_R }|X^\varepsilon_t - \hat{X}^\varepsilon_t|^2\right]
\to 0,\quad\varepsilon\downarrow 0,
\]
and hence
$X^\varepsilon_{\cdot \wedge \tau^\varepsilon_R} - \hat{X}^\varepsilon_{\cdot \wedge \tau^\varepsilon_R} \to 0$, 
in probability, $\varepsilon \downarrow 0$, in particular.
\end{cor}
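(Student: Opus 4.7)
The plan is to mimic the discretization argument of the proof of \autoref{thm:main}(ii) verbatim, now applied to the difference $X^\varepsilon-\hat{X}^\varepsilon$. Both processes contain a common $U^\varepsilon$-contribution on their right-hand side, and this term cancels in the difference, so the only genuinely new contribution, compared to the $\beta=0$ case treated in \autoref{sec:main}, arises from It\^o-expanding $\sigma(s,X^\varepsilon_s)$ on each subinterval $[k\Delta,(k+1)\Delta]$.

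First I would rewrite $X^\varepsilon_{(k+1)\Delta}$ as in \eqref{eq:incXeps}, but now applying It\^o's formula to $s\mapsto\sigma(s,X^\varepsilon_s)$ generates an extra drift term of the form $\int_{k\Delta}^s D\sigma(r,X^\varepsilon_r)\,\varepsilon\beta(Y^\varepsilon_r,Y^\varepsilon_r)\,dr$; after the outer $dW^\varepsilon_s$-integration and summation over $k$, this is precisely the quantity that the preceding lemma shows to be $o(1)$ in $L^2$. The analogous expansion of $\hat{X}^\varepsilon$ picks up the additional piece $U^\varepsilon_{(k+1)\Delta}-U^\varepsilon_{k\Delta}$, and upon subtraction the two $\beta$-contributions cancel identically, so a decomposition of the form \eqref{contribute} still holds up to this one new $o(1)$-remainder.

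The remaining estimates of \autoref{sec:main} then carry over with only cosmetic changes. \autoref{lem:discrXeps} and \autoref{lem:discrXepsbis} remain valid because their proofs survive $\beta\neq 0$ thanks to the bound $\E|\varepsilon\beta(Y^\varepsilon,Y^\varepsilon)|^p\lesssim\varepsilon^{-p}$ and the $U^\varepsilon$-increment estimate pointed out in the preamble to this corollary. The $\hat{X}^\varepsilon$-versions of \autoref{lem:discrXbis} and \autoref{lem:discrX} stated there pick up a harmless $(\tau/\varepsilon)^p$ or $\Delta^{p-1}/\varepsilon^p$ correction, which is absorbed by the $\Delta_\varepsilon$-scaling of \autoref{choiceDelta}. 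Finally, \autoref{lem:aux} on the $I^k_j,J^k_j$ terms, and the martingale decomposition $I^k_6-J^k_4=M_h+N_h$ of \autoref{lem:M}--\autoref{lem:N}, are unaffected, since they depend only on the Ornstein--Uhlenbeck structure of $Y^\varepsilon$.

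The main obstacle, namely controlling the new double-integral contribution in the $\sigma$-expansion without destroying the Gronwall iteration, has already been overcome by the preceding lemma via the familiar split of $D\sigma(r,X^\varepsilon_r)$ into a frozen piece $D\sigma(k\Delta,X^\varepsilon_{k\Delta})$ plus a continuity remainder, followed by a conditional-expectation/martingale argument that now invokes the Isserlis--Wick formula for \emph{triple} moments of $Y^\varepsilon$. Granting this, the usual Gronwall-type inequality for $\E\sup_{k'\leq h}|X^\varepsilon_{k'\Delta}-\hat{X}^\varepsilon_{k'\Delta}|^2$ has vanishing inhomogeneity, which yields the claimed $L^2$-convergence; convergence in probability on $[0,T\wedge\tau^\varepsilon_R]$ is then immediate from Markov's inequality.
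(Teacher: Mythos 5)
Your proposal is correct and follows essentially the same route as the paper: expand $X^\varepsilon$ and $\hat{X}^\varepsilon$ as in \eqref{eq:incXeps}--\eqref{eq:incX} with the $\beta$-term included, let the common $U^\varepsilon$-increments cancel, absorb the one genuinely new term $\sum_k\int_{k\Delta}^{(k+1)\Delta}(\int_{k\Delta}^s D\sigma(r,X^\varepsilon_r)\varepsilon\beta(Y^\varepsilon_r,Y^\varepsilon_r)\,dr)\,dW^\varepsilon_s$ via the preceding lemma, and rerun the localization/discretization/Gronwall machinery of \autoref{sec:main} with the modified increment bounds stated before the corollary. This is exactly how the paper justifies the corollary, so no further comment is needed.
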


The above corollary suggests that it would be sufficient to show that
$\hat{X}_{\cdot \wedge \tau^\varepsilon_R}^\varepsilon\to\bar{X}_{\cdot \wedge \tau^\varepsilon_R}$, 
in law, when $\varepsilon\downarrow 0$,
subject to some procedure allowing to let $R$ go to infinity, afterwards.
So, we at first prove the weak convergence for fixed $R$,
and then discuss the limit-procedure for $R\to\infty$.

Modify the coefficients $F,\sigma$ outside the set $\{(t,x): |x|<R\}$ in such a way 
that the new coefficients $F_R,\,\sigma_R$, but also $D\sigma_R$,
are globally bounded, and that 
both functions $F_R(t,\cdot)$ and $D\sigma_R(t,\cdot)$ are globally Lipschitz, 
uniformly in $t \in [0,T]$. 

Of course, $\hat{X}^\varepsilon_{\cdot \wedge \tau^\varepsilon_R}$ coincides with 
$\hat{X}^{\varepsilon,R}_{\cdot \wedge \tau^\varepsilon_R}$, 
where $\hat{X}^{\varepsilon,R}$ denotes the solution to the equation obtained when replacing
the coefficients of  \eqref{auxEqu} by $F_R,\,\sigma_R$, 
and the Stratonovich correction $C_R$ associated with $\sigma_R$.
Also, let $\bar{X}^R$ denote the solution to the equation obtained when replacing
the coefficients of  \eqref{eq:X} by $F_R,\,\sigma_R,\,C_R$. 
\begin{prop}
Fix $R>0$. Then, $\hat{X}^{\varepsilon,R}$ converges to $\bar{X}^R$, 
in law, when $\varepsilon\downarrow 0$.
\end{prop}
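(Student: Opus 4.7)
The plan is to follow a standard tightness-plus-identification scheme. First, I would establish tightness of the family $\{\hat{X}^{\varepsilon,R}\}_{\varepsilon>0}$ in $C([0,T];H_d)$. Since the modified coefficients $F_R, \sigma_R, D\sigma_R$ are globally bounded and Lipschitz, and the additive forcing $U^\varepsilon$ has uniformly bounded moments via the decomposition $U^\varepsilon = M^\varepsilon - \tfrac{1}{2}V^\varepsilon$ (with $\langle M^\varepsilon\rangle$ controlled as in the proof of \autoref{prop:conv} and $V^\varepsilon = O(\varepsilon)$ by \eqref{eq:V}), a Burkholder-Davis-Gundy estimate yields
\begin{equation*}
\E|\hat{X}^{\varepsilon,R}_t - \hat{X}^{\varepsilon,R}_s|^p \lesssim |t-s|^{p/2}
\end{equation*}
uniformly in $\varepsilon$ for $p\geq 2$, so Kolmogorov's tightness criterion applies.

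Second, tightness of $\{\hat{X}^{\varepsilon,R}\}$ combined with \autoref{prop:conv} yields joint tightness of the triple $(\hat{X}^{\varepsilon,R}, U^\varepsilon, W)$ on $C([0,T]; H_d \times H_d \times H_\infty)$. Let $(X^*, \eta^*, \omega^*)$ be any subsequential weak limit; by \autoref{prop:conv}, the marginal $(\eta^*, \omega^*)$ has $\eta^*$ a $d$-dimensional Brownian motion with covariance $(\sum_{\ell,m\in\N} b^i_{\ell,m}b^j_{\ell,m})_{i,j=1}^d$, independent of $\omega^*\stackrel{d}{=}W$. Invoking Skorokhod's representation theorem on an auxiliary probability space, one obtains copies $(\tilde{X}^\varepsilon, \tilde{U}^\varepsilon, \tilde{W}^\varepsilon)$ converging a.s.\ uniformly on $[0,T]$ to some $(\tilde{X}^*, \tilde{\eta}, \tilde{\omega})$, with joint laws preserved.

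Third, I would pass to the limit in the equation satisfied by $\tilde{X}^\varepsilon$ to identify $\tilde{X}^*$ as the unique solution of
\begin{equation*}
\tilde{X}^*_t = x_0 + \int_0^t (F_R + C_R)(s, \tilde{X}^*_s)\,ds + \int_0^t \sigma_R(s, \tilde{X}^*_s)\,d\tilde{\omega}_s + \tilde{\eta}_t .
\end{equation*}
The Bochner integral passes to the limit by dominated convergence (boundedness of $F_R+C_R$ plus uniform path convergence), while $\tilde{U}^\varepsilon \to \tilde{\eta}$ uniformly holds by construction. Since the limiting SDE has globally Lipschitz coefficients and $\tilde{\eta}$ is independent of $\tilde{\omega}$, pathwise uniqueness holds and the law of $\tilde{X}^*$ coincides with that of $\bar{X}^R$, so the whole original sequence converges in law.

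I expect the main obstacle to be passing to the limit in the stochastic integral $\int_0^\cdot \sigma_R(s, \tilde{X}^\varepsilon_s)\,d\tilde{W}^\varepsilon_s \to \int_0^\cdot \sigma_R(s, \tilde{X}^*_s)\,d\tilde{\omega}_s$, since the driving noises vary along the sequence as Skorokhod copies of the fixed $W$. A Riemann-sum approximation combined with a.s.\ uniform convergence of $\tilde{X}^\varepsilon$ should suffice; alternatively, a Kurtz-Protter style stability result for stochastic integrals under weak convergence applies here, the UCV (uniformly controlled variations) condition being immediate because each $\tilde{W}^\varepsilon$ shares the law of the $Q$-Wiener process $W$ with trace-class $Q$.
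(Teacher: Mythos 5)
Your proposal is correct in outline, but it identifies the limit by a genuinely different route than the paper. You use Skorokhod representation plus a stability theorem for stochastic integrals (Kurtz--Protter under the UCV/UT condition, or Riemann-sum approximation) to pass to the limit directly in the equation. The paper instead never realizes the convergence almost surely: it works on the canonical space with the pushforward laws of the triple $(\hat{X}^{\varepsilon,R},U^\varepsilon,W)$, shows that the process $M^R_t=\xi_t-x_0-\int_0^t(F_R+C_R)(s,\xi_s)\,ds-\eta_t$ together with the associated quadratic-covariation corrections are continuous local martingales under each approximating law and hence, by the stability of the local martingale property under weak convergence \cite[IX.~Cor.~1.19]{JS02}, under the limit law as well; the stochastic integral is then reconstructed via the martingale representation theorem \cite[Theorem 8.2]{DPZa14}, and the conclusion follows from uniqueness in law, exactly as in your last step. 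The martingale-problem route buys you freedom from the two technical points your scheme must still handle: (a) transferring the It\^o equation to the Skorokhod copies, which is not a purely pathwise relation and requires an approximation/measurability argument (you flag the integral-convergence issue, but the transfer of the relation itself deserves the same care), and (b) verifying the filtration/adaptedness hypotheses of the Kurtz--Protter theorem for the copied drivers. Conversely, your route avoids the representation theorem and is arguably more self-contained once UCV is checked, which is indeed immediate here since every $\tilde{W}^\varepsilon$ has the law of the fixed $Q$-Wiener process. One small caveat on your tightness step: the uniform bound $\E|\hat{X}^{\varepsilon,R}_t-\hat{X}^{\varepsilon,R}_s|^p\lesssim|t-s|^{p/2}$ is true but needs the increment estimate for $U^\varepsilon$ (via stationarity of $\varepsilon Y^\varepsilon$, whose moments are $\varepsilon$-independent), not merely the boundedness of $\sup_t|U^\varepsilon_t|$; the paper sidesteps this by proving equicontinuity of $\hat{X}^{\varepsilon,R}-U^\varepsilon$ only and adding back $U^\varepsilon$, which is tight by \autoref{prop:conv}.
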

\begin{proof}
Since
\begin{align*}
\hat{X}^{\varepsilon,R}_t - U^\varepsilon_t
= x_0 + \int_0^t \left( F_R(s,\hat{X}^{\varepsilon,R}_s) + C_R(s,\hat{X}^{\varepsilon,R}_s)\right) ds + 
\int_0^t \sigma_R(s,\hat{X}^{\varepsilon,R}_s ) dW_s,
\end{align*}
by boundedness of the coefficients on the above right-hand side, we obtain that
\[
\E\left[\sup_{t\le T}
|\hat{X}^{\varepsilon,R}_t - U^\varepsilon_t|
\right]
\,\lesssim\,
|x_0| + T
+ \E\left[\sup_{t\le T}|\int_0^t \sigma_R(s,\hat{X}^{\varepsilon,R}_s ) dW_s|
\right],
\]
where Burkholder-Davis-Gundy's inequality gives
$\E\left[\sup_{t\le T}|\int_0^t \sigma_R(s,\hat{X}^{\varepsilon,R}_s ) dW_s|\right]\,\lesssim\,T^{1/2}$.

Similarly, 
$\E\left[|(\hat{X}^{\varepsilon,R}_{t_2} - U^\varepsilon_{t_2}) - (\hat{X}^{\varepsilon,R}_{t_1} - U^\varepsilon_{t_1})|^p\right]\lesssim|t_2-t_1|^{p/2}$,
for any $|t_2-t_1|<1$, and any $p>1$.
Thus, by Kolmogorov-Chentsov's theorem, for every $\alpha\in(0,1)$,
one can find $\Delta\in(0,1)$ such that
\[
\PP\left\{\sup_{t_1,t_2\in[0,T],\,|t_2-t_1|\le\Delta}
\frac{
|(\hat{X}^{\varepsilon,R}_{t_2} - U^\varepsilon_{t_2}) - (\hat{X}^{\varepsilon,R}_{t_1} - U^\varepsilon_{t_1})|
}
{|t_2-t_1|^\gamma}
\,\le\,const
\right\}
\,\ge\,1-\alpha,\quad\forall\,\varepsilon>0,
\]
where $const$ depends on $\gamma$, but not on $\varepsilon$,
and $\gamma\in(0,1/2)$ can be freely chosen.

We therefore have equi-boundedness and equi-continuity
of $\{\hat{X}^{\varepsilon,R} - U^\varepsilon\}_{\varepsilon>0}$ 
with arbitrarily high probability, and hence the family $\{\hat{X}^{\varepsilon,R} - U^\varepsilon\}_{\varepsilon>0}$
is tight with respect to the uniform topology in $C([0,T],\R^d)$,
first applying Arzel\`a-Ascoli, followed by Prokhorov's theorem.
Moreover, $\{U^\varepsilon\}_{\varepsilon > 0}$ is trivially tight by \autoref{prop:conv},
so that adding $\hat{X}^{\varepsilon,R} - U^\varepsilon$ and $U^\varepsilon$
would make $\{\hat{X}^{\varepsilon,R}\}_{\varepsilon > 0}$ tight, too.

All in all, the family of triples
$\{\left(\rule{0pt}{9pt}\right.
\hat{X}^{\varepsilon,R},U^\varepsilon,{W}
\left.\rule{0pt}{9pt}\right)\}_{\varepsilon > 0}$ is tight.

Next, for $\varepsilon>0$, 
let $\PP^{R,\varepsilon}$ be the pushforward measure
$\PP\circ(\hat{X}^{\varepsilon,R},U^\epsilon,
{W} )^{-1}$ 
on the space
\[
\tilde{\Omega}=C([0,T],H_d)\times C([0,T],H_d)
\times C([0,T],H_\infty)
\]
equipped with the Borel-$\sigma$-algebra $\mathcal{B}$,
and let $(\xi,\eta,\omega)$ denote the coordinate process on $\tilde{\Omega}$.

By tightness of 
$\{(\hat{X}^{\varepsilon,R},U^\epsilon
,{W})\}_{\varepsilon > 0}$,
there exists a subsequence $(\varepsilon_n)_{n\in\N}$ such that
$\PP^{R,\varepsilon_n}$ weakly converges to a probability measure $\PP^R$ 
on $(\tilde{\Omega},\mathcal{B})$, when $n\uparrow\infty$.

Let $\tilde{\mathcal{F}}$ be the $\PP^R$-\,completion of $\mathcal{B}$,
and let $(\tilde{\mathcal{F}}_t)_{t\in[0,T]}$ be the smallest filtration
the process $(\xi,\eta,\omega)$ is adapted to, on the one hand,
and which satisfies the usual conditions with respect to $\PP^R$, on the other.
Also, introduce $\tilde{\mathcal{F}}^n,\,(\tilde{\mathcal{F}}_t^n)_{t\in[0,T]}$
in a similar way with respect to $\PP^{R,\varepsilon_n},\,n\in\N$.

Now, it easily follows from \autoref{prop:conv} that,
on $(\tilde{\Omega},\tilde{\mathcal{F}},\PP^R)$, the following distributional properties must hold
for the pair of processes $(\eta,\omega)$:
$\eta$ is a $d$-dimensional Wiener process 
with covariance $(\sum_{\ell,m \in \N} b^i_{\ell,m}b^j_{\ell,m})_{i,j=1}^d$,
$\omega$ is a $Q$-Wiener process, 
$\eta$ and $\omega$ are independent.

Introduce
\begin{align*}
M^{R}_t = \xi_t - x_0 - \int_0^t \left( F_R(s,\xi_s) + C_R(s,\xi_s)\right) ds - \eta_t,
\quad t\in[0,T],\numberthis\label{martPro}
\end{align*}
and observe that each component of both processes $M^R$ and $\omega$, but also
\begin{gather*}
M^{R,i}_t M^{R,j}_t-
\int_0^t \sum_{m \in \N}
\sigma^{i,m}_R(s,\xi_s)
\sigma^{j,m}_R(s,\xi_s) q_m ds,
\quad t\in[0,T],\quad i,j=1,\dots,d,
\\
M^{R,i}_t \omega^m_t -
\int_0^t 
\sigma^{i,m}_R(s,\xi_s) q_m ds,
\quad t\in[0,T],\quad i=1,\dots,d,\quad m \in \mathbb{N},
\\
\omega^\ell_t \omega^m_t -
t \delta_{\ell,m} q_m, \quad t\in[0,T],\quad \ell, m \in \mathbb{N},
\end{gather*} 
are continuous local martingales
with respect to $(\tilde{\mathcal{F}}_t^n)_{t\in[0,T]}$
on $(\tilde{\Omega},\tilde{\mathcal{F}}^n,\PP^{R,\varepsilon_n})$, for any $n\in\N$,
and hence they are continuous local martingales
with respect to $(\tilde{\mathcal{F}}_t)_{t\in[0,T]}$
on $(\tilde{\Omega},\tilde{\mathcal{F}},\PP^{R})$, too,
by \cite[IX. Cor.1.19]{JS02}.

Therefore, applying \cite[Theorem 8.2]{DPZa14} to
the pair of process $(M^R,\omega)$ yields
\begin{align*}
M^R_t = \int_0^t \sigma_R(s,\xi_s ) d{W}^R_s,
\quad
\omega_t = \int_0^t 1\, d{W}^R_s = {W}^R_t,
\quad t\in[0,T], 
\end{align*} 
on $(\tilde{\Omega},\tilde{\mathcal{F}},\PP^R)$, 
or an enlargement of this space we still denote by $(\tilde{\Omega},\tilde{\mathcal{F}},\PP^R)$,
where $W^R$ is another $Q$-Wiener process,
which, by the above representation,
even $\PP^R$-\,almost surely coincides with $\omega$, so that
\begin{align*}
M^R_t = \int_0^t \sigma_R(s,\xi_s ) d\omega_s,\quad t\in[0,T],\quad\mbox{$\PP^R$-\,a.s.}
\end{align*} 

Thus, equation \eqref{martPro} can be written as
\begin{align*}
\xi_t 
\,=\,
x_0 + \int_0^t \left( F_R(s,\xi_s) + C_R(s,\xi_s)\right) ds + 
\int_0^t \sigma_R(s,\xi_s ) d\omega_s + \eta_t,
\quad t\in[0,T],\quad\mbox{$\PP^R$-\,a.s.},
\end{align*}
where $\omega$ is a $Q$-Wiener process, 
while $\eta$ is a $d$-dimensional Wiener process, independent of $\omega$,
and with covariance $(\sum_{\ell,m \in \N} b^i_{\ell,m}b^j_{\ell,m})_{i,j=1}^d$.
Observe that the process $\bar{X}^R$ satisfies the same type of equation, 
as $\sum_{\ell, m \in \N} b_{\ell,m} \bar{W}^{\ell,m}$ from \eqref{eq:X}
is a $d$-dimensional Wiener process 
with covariance $(\sum_{\ell,m \in \N} b^i_{\ell,m}b^j_{\ell,m})_{i,j=1}^d$, too.
But, since this type of equation admits a unique strong solution, the laws of $\xi$ and $\bar{X}^R$ 
must be the same, proving $\hat{X}^{\varepsilon_n,R}\to\bar{X}^R$, 
in law, when $n\uparrow\infty$. However, the same argument applies to any converging subsequence,
and the limit will always be the same, finally proving 
$\hat{X}^{\varepsilon,R}\to\bar{X}^R$, 
in law, when $\varepsilon\downarrow 0$.
\end{proof}

It remains to discuss how $R$ can be taken to infinity.

Recall that $\bar{X}$ is the solution of \eqref{eq:X},
and it is not difficult to see that $\bar{X}^R$ converges to $\bar{X}$, in law,  as $R \to \infty$.

Now take a function $\varphi_R \in C(C([0,T],\R^d),[0,1])$, 
such that $\varphi_R(u)=0$, if $\sup_{t \in [0,T]} |u_t| \leq R-1$, 
and $\varphi_R(u)=1$, if $\sup_{t \in [0,T]} |u_t| > R$. 

Then,
\begin{align*}
\PP \{ \tau^\varepsilon_R < T\} \leq 
\PP \ag \sup_{t \in [0,T]}
|\hat{X}^{\varepsilon,R}_t| \geq R 
\cg
\leq 
\E \left[ \varphi_R(\hat{X}^{\varepsilon,R})\right],
\end{align*}
and because $\hat{X}^{\varepsilon,R} \to \bar{X}^R$, in law, when $\varepsilon\downarrow 0$, we deduce that
\begin{align*}
\limsup_{\varepsilon \to 0}
\PP \{ \tau^\varepsilon_R < T\} 
\leq 
{\E} \left[ \varphi_R(\bar{X}^R)\right]
\leq
{\PP} \ag \sup_{t \in [0,T]}
|\bar{X}^R_t| \geq R-1 \cg
=
\PP \ag \sup_{t \in [0,T]}
|\bar{X}_t| \geq R-1 \cg,
\end{align*}
where the last probability converges to zero, when $R\to\infty$, because $\bar{X}$ is a global solution.

As a consequence, for any $\psi \in C_b(C([0,T],\R^d),\R)$,
\begin{align*}
\left| \E \left[\psi(X^\varepsilon)\right] - \E \left[\psi(\bar{X})\right] \right|
\leq\,
&\left| \E \left[\psi(X^\varepsilon)\right] - \E \left[\psi(X^\varepsilon_{\cdot \wedge \tau^\varepsilon_R})\right] \right|
+
\left| \E \left[\psi(X^\varepsilon_{\cdot \wedge \tau^\varepsilon_R})\right] - \E \left[\psi(\hat{X}^{\varepsilon,R}_{\cdot \wedge \tau^\varepsilon_R})\right] \right| \\
&+
\left| \E \left[\psi(\hat{X}^{\varepsilon,R}_{\cdot \wedge \tau^\varepsilon_R})\right] - \E \left[\psi(\hat{X}^{\varepsilon,R})\right] \right| 
+
\left| \E \left[\psi(\hat{X}^{\varepsilon,R})\right] - {\E} \left[\psi(\bar{X}^R)\right] \right| \\
&+
\left| {\E} \left[\psi(\bar{X}^R)\right] - \E \left[\psi(\bar{X})\right] \right|.
\end{align*}
Here, when
taking $R$ large enough, we can make all the summands on the right-hand side,
except for the second and fourth, arbitrarily small, uniformly in $\varepsilon$,
and, for fixed $R$, the remaining terms go to zero, when $\varepsilon \downarrow 0$.

Thus, by a diagonal argument, the convergence in law of $X^\varepsilon \to \bar{X},\,\varepsilon\downarrow 0$, follows,
completing the proof of the theorem.
\section{Application to Climate Models} \label{sec:clim}
We now apply \autoref{thm:main} to perform stochastic model reduction 
for a subclass of the stochastic climate models
given by \eqref{eq:introXeps}, \eqref{eq:introYeps} in the introduction:
we restrict ourselves to a simpler version of \eqref{eq:introYeps},
omitting fast forcing $\varepsilon^{-2}f^2_{\varepsilon^{-1}t}$ and $\varepsilon^{-1} A^2_2 Y^\varepsilon_t$, on the one hand,
but also neglecting the interaction $B^2_{12}(X^\varepsilon_t,Y^\varepsilon_t)$, on the other.
While the first two terms we omit are technically demanding but look doable from a wider prospective, 
which is beyond this paper, the term $\varepsilon^{-1}B^2_{12}(X^\varepsilon_t,Y^\varepsilon_t)$
involving the neglected interaction is notoriously hard and beyond our understanding, right now.

For each $\varepsilon>0$, 
let $(X^\varepsilon,Y^\varepsilon)$ be a pair of processes satisfying
\begin{align}
\frac{dX^\varepsilon_t}{dt} &= F^1_t +  A^1_1 X^\varepsilon_t +  A^1_2 Y^\varepsilon_t 
+ B^1_{11}(X^\varepsilon_t,X^\varepsilon_t) + B^1_{12}(X^\varepsilon_t,Y^\varepsilon_t)
+ \varepsilon B^1_{22}(Y^\varepsilon_t,Y^\varepsilon_t), \label{eq:Xepsclim}\\
\frac{dY^\varepsilon_t}{dt} &= \varepsilon^{-2} A^2_1 X^\varepsilon_t 
+\varepsilon^{-2} B^2_{11}(X^\varepsilon_t,X^\varepsilon_t) -\varepsilon^{-2} Y^\varepsilon_t 
+  \varepsilon^{-2} \dot{W}_t, \label{eq:Yepsclim}
\end{align}
where $A^1_1:H_d \to H_d$, $A^1_2:H_\infty \to H_d$, $A^2_1:H_d \to H_\infty$ are bounded linear operators, $B^1_{11}:H_d \times H_d \to H_d$, $B^1_{12}:H_d \times H_\infty \to H_d$,
$B^1_{22}:H_\infty \times H_\infty \to H_d$, $B^2_{11}:H_d \times H_d \to H_\infty$ 
are continuous bilinear maps, 
and $F^1:[0,T] \to H_d$ is a deterministic continuous external force. 
Stochastic basis and Wiener process $W$ are taken to be the same as in \autoref{realTimeWipro}.

In what follows, the above equations will always have initial conditions $(x_0,y_0)$, 
where $x_0 \in H_d$ can be chosen arbitrarily, 
while $y_0 = \int_{-\infty}^0 \varepsilon^{-2} e^{\varepsilon^{-2}s} dW_s$
will be fixed to ensure pseudo stationarity of the scaled unresolved variables.
Note that fixing $y_0\in H_\infty$ this way would not restrict the initial data of the reduced equations.

In fluid dynamics settings like \eqref{eq:introZ},
it is customary to assume that $A$ is self-adjoint, 
and that the full nonlinearity is skew-symmetric: 
$\langle B(z',z),z \rangle_H = 0$,  $z,z' \in H$, see \cite{MaWa06}.
We therefore make the following assumptions on the projected coefficients:
\begin{itemize}
\item[(\textbf{C1})] $A^2_1 = (A^1_2)^*$;
\item[(\textbf{C2})] $\langle B^1_{11}(x',x),x \rangle_{H_d} = 0$, for all $x,x' \in H_d$;
\item[(\textbf{C3})] $\langle B^1_{12}(x',y),x \rangle_{H_d} = - \langle B^2_{11}(x',x),y \rangle_{H_\infty} $, 
for all $x,x' \in H_d$, $y \in H_\infty$.
\end{itemize}

Also, without loss of generality, 
we can assume that $B^1_{22}$ is symmetric in the sense of
$\langle B^1_{22}(\mathbf{f}_\ell,\mathbf{f}_m) , \mathbf{e}_i \rangle_{H_d}
=
\langle B^1_{22}(\mathbf{f}_m,\mathbf{f}_\ell) , \mathbf{e}_i \rangle_{H_d}$,
for all $i,\ell,m$; and finally we will need the analogue of (A5), that is
\begin{itemize}
\item[(\textbf{C4})] 
$\sum_{\ell \in \N} 
 \langle B^1_{22}(\mathbf{f}_\ell,\mathbf{f}_\ell) , \mathbf{e}_i \rangle_{H_d}\, q_\ell\,=0$,
 for all $i=1,\dots,d$.
\end{itemize}
Note that the latter condition is indeed satisfied for many fluid-dynamics models---it usually holds 
independently of the structure of the noise because 
$\langle B^1_{22}(\mathbf{f}_\ell,\mathbf{f}_m) , \mathbf{e}_i \rangle_{H_d}$
would be zero on the diagonal, when $\ell=m$, for all $i$.

Next, we bring equations \eqref{eq:Xepsclim},\eqref{eq:Yepsclim} into a form
which makes them  comparable to \eqref{eq:introabsX},\eqref{eq:introabsY}.

Using the definition of $y_0$,
we have the following mild formulation of \eqref{eq:Yepsclim},
\begin{equation} \label{eq:Yepsclim'}
Y_t^\varepsilon\,=\,
\tilde{Y}^\varepsilon_t + 
\int_0^t \varepsilon^{-2} e^{-\varepsilon^{-2}(t-s)}
\left( A^2_1 X^\varepsilon_s 
+B^2_{11}(X^\varepsilon_s,X^\varepsilon_s)\right) ds,
\quad t\in[0,T],
\end{equation}
where
\begin{equation*}
\tilde{Y}^\varepsilon_t = \int_{-\infty}^t \varepsilon^{-2}e^{-\varepsilon^{-2}(t-s)} dW_s,
\quad t\in\R,
\end{equation*}
is a stationary Ornstein-Uhlenbeck process.
Plugging \eqref{eq:Yepsclim'} into \eqref{eq:Xepsclim},
$X^\varepsilon$ alternatively satisfies
\begin{align*}
X^\varepsilon_t = \,
&\,x_0 + \int_0^t \left( F^1_s +  A^1_1 X^\varepsilon_s + B^1_{11}(X^\varepsilon_s,X^\varepsilon_s) \right) ds 
+ \int_0^t A^1_2 Z^\varepsilon_s ds
+ \int_0^t B^1_{12}\left(X^\varepsilon_s,Z^\varepsilon_s\right) ds
\numberthis \label{eq:Xepsclim1'}\\
&+ \int_0^t A^1_2 \tilde{Y}^\varepsilon_s ds 
+ \int_0^t B^1_{12}(X^\varepsilon_s,\tilde{Y}^\varepsilon_s) ds\\
&+\int_0^t \varepsilon B^1_{22}(\tilde{Y}^\varepsilon_s,\tilde{Y}^\varepsilon_s) ds 
+2\int_0^t \varepsilon B^1_{22}(\tilde{Y}^\varepsilon_s,Z^\varepsilon_s ) ds 
+\int_0^t \varepsilon B^1_{22}\left(Z^\varepsilon_s,Z^\varepsilon_s\right) ds,
\quad t\in[0,T],
\end{align*}
when using the abbreviation
\begin{align*}
Z^\varepsilon_s =
\int_0^s \varepsilon^{-2} e^{-\varepsilon^{-2}(s-r)}
\left( A^2_1 X^\varepsilon_r 
+B^2_{11}(X^\varepsilon_r,X^\varepsilon_r)\right) dr.
\end{align*}

Since $Z^\varepsilon_s$ is close to 
$A^2_1 X^\varepsilon_s + B^2_{11}(X^\varepsilon_s,X^\varepsilon_s)$,
for small $\varepsilon$,
and since both terms
$B^1_{22}(\tilde{Y}^\varepsilon_s,Z^\varepsilon_s ),\,
B^1_{22}\left(Z^\varepsilon_s,Z^\varepsilon_s\right)$
will be shown to vanish with $\varepsilon$, too,
the process $X^\varepsilon$ should be close to $\tilde{X}^\varepsilon$ satisfying
\begin{align*}
\tilde{X}^\varepsilon_t = \,
&\,x_0 
+ \int_0^t \left( F^1_s +  A^1_1 \tilde{X}^\varepsilon_s 
+ B^1_{11}(\tilde{X}^\varepsilon_s,\tilde{X}^\varepsilon_s) \right) ds 
+ \int_0^t A^1_2 \left( A^2_1 \tilde{X}^\varepsilon_s 
+B^2_{11}(\tilde{X}^\varepsilon_s,\tilde{X}^\varepsilon_s) \right) ds
\numberthis \label{eq:XepsTilde}\\
&+ \int_0^t B^1_{12}\left(\tilde{X}^\varepsilon_s,
\left( A^2_1 \tilde{X}^\varepsilon_s 
+B^2_{11}(\tilde{X}^\varepsilon_s,\tilde{X}^\varepsilon_s)\right) \right)ds \\
&+ \int_0^t A^1_2 \tilde{Y}^\varepsilon_s ds 
+ \int_0^t B^1_{12}(\tilde{X}^\varepsilon_s,\tilde{Y}^\varepsilon_s)\,ds
+ \int_0^t \varepsilon B^1_{22}(\tilde{Y}^\varepsilon_s,\tilde{Y}^\varepsilon_s)\,ds,
\quad t\in[0,T],
\end{align*} 
which is an equation of type \eqref{eq:introabsX} with
\begin{align*}
F(t,x)\,=\,
&\,F^1_t
+ A^1_1{x} + B^1_{11}(x,x) + A^1_2 \left( A^2_1{x} + B^2_{11}(x,x) \right)
+ B^1_{12}\left(x,\left( A^2_1{x} + B^2_{11}(x,x)\right) \right),\\
\sigma(t,x)\,=\,
&\,A^1_2 + B^1_{12}(x,\cdot)\,,\\
\beta\,=\,&\,B^1_{22}\,.
\end{align*}

Thus, in this setting, the analogue of \eqref{eq:X} would read
\begin{align*} 
\bar{X}_t = \,&x_0 
+ \int_0^t \left( F^1_s 
+  A^1_1 \bar{X}_s + B^1_{11}(\bar{X}_s,\bar{X}_s) \right) ds \numberthis \label{eq:Xclim}
+ \int_0^t A^1_2 \left( A^2_1 \bar{X}_s + B^2_{11}(\bar{X}_s,\bar{X}_s) \right) ds \\
&+ \int_0^t B^1_{12}\left(\bar{X}_s,
\left(A^2_1 \bar{X}_s + B^2_{11}(\bar{X}_s,\bar{X}_s)\right) \right)ds
+ \int_0^t C(\bar{X}_s)\,ds\\
&+ A^1_2 W_t
+ \int_0^t B^1_{12}(\bar{X}_s,dW_s) 
+ \sum_{\ell, m \in \N} b_{\ell,m} \bar{W}^{\ell,m}_t,
\quad t\in[0,T],
\end{align*} 
where the Stratonovich correction term $C: H_d \to H_d$ simplifies to
\begin{equation*}
\langle C({x}) ,   \mathbf{e}_i \rangle_{H_d}
= \frac{1}{2} \sum_{m \in \N} q_m \sum_{j=1}^d
\langle B^1_{12}(\mathbf{e}_j,\mathbf{f}_m), \mathbf{e}_i \rangle_{H_d}
\langle B^1_{12}(x,\mathbf{f}_m), \mathbf{e}_j \rangle_{H_d},
\quad
i=1,\dots,d,
\end{equation*}
and
\begin{equation*}
b^i_{\ell,m} = \langle B^1_{22}(\mathbf{f}_\ell,\mathbf{f}_m) , \mathbf{e}_i \rangle_{H_d}\,
\sqrt{\frac{q_\ell q_m}{2}},
\quad  i=1,\dots,d,\;\ell,m\in\N.
\end{equation*}
\begin{prop} \label{prop:wellpos}
When assuming (C1)-(C3), equation \eqref{eq:Xclim} admits a unique global strong solution on $[0,T]$.
\end{prop}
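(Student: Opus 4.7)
The plan is to combine a standard local existence argument with a global \emph{a priori} estimate on $|\bar{X}_t|$, the latter exploiting the algebraic cancellations encoded in (C1)--(C3). First, I would observe that the coefficients appearing on the right-hand side of \eqref{eq:Xclim} are polynomial in $\bar{X}_s$ of degree at most three (the top-degree term being $B^1_{12}(\bar{X}_s,B^2_{11}(\bar{X}_s,\bar{X}_s))$), while the diffusion coefficients are affine: $A^1_2$ and $B^1_{12}(\bar{X}_s,\cdot)$ acting on $W$, together with the independent finite-dimensional Brownian term $\sum_{\ell,m \in \N} b_{\ell,m} \bar{W}^{\ell,m}$. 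Since $H_d$ is finite-dimensional, these maps are automatically locally Lipschitz, and standard SDE theory (see \cite{DPZa14}) produces a unique maximal strong solution on a random interval $[0,\tau_\infty)$.

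Second, to show $\tau_\infty > T$ $\PP$-almost surely, I would apply It\^o's formula to $|\bar{X}_t|^2$ on $[0,\tau_\infty)$ and track the high-order contributions. The cubic term $2\langle \bar{X}_t, B^1_{11}(\bar{X}_t,\bar{X}_t)\rangle$ vanishes by (C2); combining (C1) with (C3), the two cubic contributions $2\langle \bar{X}_t, A^1_2 B^2_{11}(\bar{X}_t,\bar{X}_t)\rangle = 2\langle A^2_1 \bar{X}_t, B^2_{11}(\bar{X}_t,\bar{X}_t)\rangle$ and $2\langle \bar{X}_t, B^1_{12}(\bar{X}_t, A^2_1 \bar{X}_t)\rangle = -2\langle B^2_{11}(\bar{X}_t,\bar{X}_t), A^2_1 \bar{X}_t\rangle$ cancel exactly; and, crucially, the quartic term $2\langle \bar{X}_t, B^1_{12}(\bar{X}_t, B^2_{11}(\bar{X}_t,\bar{X}_t))\rangle$ reduces by (C3) to $-2|B^2_{11}(\bar{X}_t,\bar{X}_t)|^2 \leq 0$. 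What remains is at most quadratic in $|\bar{X}_t|$ plus a bounded forcing contribution (absorbing the linear terms from $F^1$, $A^1_1$, $A^1_2 A^2_1$ and the Stratonovich correction $C$, as well as the It\^o quadratic-variation terms coming from $A^1_2 dW$, $B^1_{12}(\bar{X},dW)$ and $\sum b_{\ell,m} d\bar{W}^{\ell,m}$), plus the corresponding stochastic-integral martingale terms.

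Third, introducing the stopping times $\tau_n = \inf\{t \geq 0 : |\bar{X}_t| \geq n\}\wedge T$ to kill the martingales in expectation, I would arrive at an inequality of the form
\[
\E |\bar{X}_{t\wedge\tau_n}|^2 \;\leq\; |x_0|^2 + K \int_0^t \left(1 + \E |\bar{X}_{s\wedge\tau_n}|^2\right) ds, \qquad t\in[0,T],
\]
with a deterministic constant $K$ independent of $n$. Gronwall's lemma yields a uniform bound $\sup_n \E |\bar{X}_{t\wedge\tau_n}|^2 \leq (|x_0|^2 + KT) e^{KT}$, which together with Fatou's lemma rules out $\tau_\infty \leq T$ and hence gives global existence; uniqueness propagates from the local level in the usual way. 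The main obstacle is the potentially quartic contribution from the composition $B^1_{12}(\bar{X}, B^2_{11}(\bar{X},\bar{X}))$, which \emph{a priori} would preclude any linear Gronwall argument; its reduction to a non-positive quantity is precisely what (C3) achieves, and carefully verifying this cancellation, together with the cubic one involving (C1)+(C3), is the only non-routine step.
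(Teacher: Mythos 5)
Your proposal is correct and follows essentially the same route as the paper: It\^o's formula applied to $|\bar{X}_t|^2$, the same four cancellation identities derived from (C1)--(C3) (vanishing of the $B^1_{11}$ term, exact cancellation of the two cubic $A^1_2 B^2_{11}$ and $B^1_{12}(\cdot,A^2_1\cdot)$ contributions, and reduction of the quartic term to $-2\|B^2_{11}(\bar{X},\bar{X})\|_{H_\infty}^2\le 0$), followed by a Gronwall argument to exclude explosion before $T$. The only difference is cosmetic: the paper controls $\E[\sup_{t'\le t}|\bar{X}_{t'\wedge\tau}|^2]$ via Burkholder--Davis--Gundy, whereas you stop at $\tau_n$, take plain expectations of the martingale terms and conclude via Gronwall plus Fatou/Markov, which is an equally valid standard way to close the non-explosion argument.
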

\begin{proof}
First,
regularity of coefficients guarantees the existence of a unique local strong solution.
Second, by It\^o's formula,
\begin{align*}
\frac{1}{2}|\bar{X}_{t\wedge\tau}|^2 \,=\, &\,\frac{1}{2}|x_0|^2 
+ \int_0^{t\wedge\tau} \langle F^1_s +  A^1_1 \bar{X}_s + B^1_{11}(\bar{X}_s,\bar{X}_s) , \bar{X}_s \rangle\,ds \\
&+ \int_0^{t\wedge\tau} \langle  A^1_2 \left( A^2_1 \bar{X}_s 
+B^2_{11}(\bar{X}_s,\bar{X}_s) \right), \bar{X}_s \rangle\,ds\\
&+  \int_0^{t\wedge\tau} \langle B^1_{12}\left(\bar{X}_s,
\left( A^2_1 \bar{X}_s 
+B^2_{11}(\bar{X}_s,\bar{X}_s)\right) \right) , \bar{X}_s \rangle\,ds\,
+ \int_0^{t\wedge\tau}\langle C(\bar{X}_s),\bar{X}_s \rangle\,ds\\
&+ \int_0^{t\wedge\tau} \langle A^1_2 dW_s  , \bar{X}_s \rangle  
+ \int_0^{t\wedge\tau} \langle B^1_{12}(\bar{X}_s,dW_s), \bar{X}_s \rangle 
+ \sum_{\ell,m\in\N} \int_0^{t\wedge\tau}\langle b_{\ell,m},\bar{X}_s \rangle\,d\bar{W}^{\ell,m}_s\\
&+\frac{1}{2}\sum_{m\in\N} |A^1_2\mathbf{f}_m|^2 q_m (t\wedge\tau)
+\frac{1}{2}\sum_{m\in\N} \int_0^{t\wedge\tau}\!\! |B^1_{12}(\bar{X}_s,\mathbf{f}_m)|^2 q_m\,ds
+\frac{1}{2}\sum_{\ell,m\in\N} |b_{\ell,m}|^2 (t\wedge\tau),
\end{align*}
for any fixed $t\in[0,T]$, and any stopping time $\tau$ smaller than a possible explosion time.

Applying (C1)-(C3), we have the identities
\begin{gather*}
\langle B^1_{11}(\bar{X}_s,\bar{X}_s) , \bar{X}_s \rangle_{H_d} = 0, \\
\langle A^1_2  B^2_{11}(\bar{X}_s,\bar{X}_s), \bar{X}_s \rangle_{H_d}=
\langle B^2_{11}(\bar{X}_s,\bar{X}_s),A^2_1 \bar{X}_s \rangle_{H_\infty}, \\
\langle B^1_{12}(\bar{X}_s,A^2_1 \bar{X}_s) , \bar{X}_s \rangle_{H_d} =
-\langle B^2_{11}(\bar{X}_s,\bar{X}_s) , A^2_1 \bar{X}_s \rangle_{H_\infty},\\
\langle B^1_{12}(\bar{X}_s,B^2_{11}(\bar{X}_s,\bar{X}_s)) , \bar{X}_s \rangle_{H_d} =
-\| B^2_{11}(\bar{X}_s,\bar{X}_s)\|^2_{H_\infty},
\end{gather*}
leading to
\begin{equation*}
\E \left[ \sup_{t'\leq t}|\bar{X}_{t'\wedge\tau}|^2 \right] 
\lesssim \left( 1 + \int_0^t \E \left[ \sup_{s'\leq s}|\bar{X}_{s'\wedge \tau}|^2 \right] ds\right),
\end{equation*}
again using the regularity of the coefficients combined with Burkholder-Davis-Gundy's inequality.
Thus, by Gronwall, the local solution $\bar{X}$ has to be global on $[0,T]$.
\end{proof}
\begin{rmk}\label{allFine}\rm
In a very similar way, it can be shown that both equations
\eqref{eq:Xepsclim1'} \& \eqref{eq:XepsTilde}
admit unique global strong solutions on $[0,T]$, too,
and hence those proofs are omitted.
As a consequence, 
simply substituting the solution of \eqref{eq:Xepsclim1'} into \eqref{eq:Yepsclim'},
for each $\varepsilon>0$,
there is a unique pair of processes $(X^\varepsilon,Y^\varepsilon)$
satisfying \eqref{eq:Xepsclim},\eqref{eq:Yepsclim} on $[0,T]$.
\end{rmk}
\begin{thm} \label{thm:climate}
Assume (C1)-(C3), fix $\varepsilon>0$,
and let $(X^\varepsilon,Y^\varepsilon)$ be the unique pair of processes
satisfying \eqref{eq:Xepsclim},\eqref{eq:Yepsclim}
on a given climate time interval $[0,T]$.

(i)
If (C4), then $X^\varepsilon$ converges in law, $\varepsilon\downarrow 0$,
to the unique process $\bar{X}$ satisfying \eqref{eq:Xclim}.

(ii)
However, if (C4) comes via $B^1_{22}=0$, then the stronger convergence \eqref{eq:introlim} holds true.
\end{thm}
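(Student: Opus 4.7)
The strategy is to reduce \autoref{thm:climate} to \autoref{thm:main} applied to the auxiliary process $\tilde X^\varepsilon$ of \eqref{eq:XepsTilde}, and then control the discrepancy $X^\varepsilon - \tilde X^\varepsilon$ pathwise. First I would verify that equation \eqref{eq:XepsTilde} is of the abstract form \eqref{eq:introabsX} with
\[
F(t,x) = F^1_t + A^1_1 x + B^1_{11}(x,x) + A^1_2 G(x) + B^1_{12}\bigl(x, G(x)\bigr),\qquad \sigma(t,x) = A^1_2 + B^1_{12}(x,\cdot),\qquad \beta = B^1_{22},
\]
where $G(x) = A^2_1 x + B^2_{11}(x,x)$. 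Assumptions (A1)--(A3) follow from the continuity and bilinearity of the ingredients (note $D_x\sigma$ is constant in $x$, so trivially Lipschitz); (A4) is secured by an energy estimate analogous to \autoref{prop:wellpos} using (C1)--(C3); and (A5) coincides with (C4). Hence \autoref{thm:main} yields $\tilde X^\varepsilon \to \bar X$, in law for part (i) and in the stronger sense \eqref{eq:introlim} for part (ii). It then suffices to show $\sup_{t\leq T}|X^\varepsilon_t - \tilde X^\varepsilon_t| \to 0$ in probability as $\varepsilon \downarrow 0$, which combines with each mode of convergence to give the conclusion of the corresponding part.

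Second, I would localize via $\tau^\varepsilon_R = \inf\{t : |X^\varepsilon_t|\vee|\tilde X^\varepsilon_t| \geq R\}$ and exploit the fact that $Z^\varepsilon$ solves the linear ODE $\varepsilon^2\dot Z^\varepsilon_s = G(X^\varepsilon_s) - Z^\varepsilon_s$ with $Z^\varepsilon_0 = 0$. This gives the uniform bound $|Z^\varepsilon_s| \leq \sup_{r\leq s}|G(X^\varepsilon_r)|$ on $[0,\tau^\varepsilon_R]$, and the bilinear integration-by-parts identity
\[
\int_0^t \Phi(X^\varepsilon_s)\bigl(Z^\varepsilon_s - G(X^\varepsilon_s)\bigr)\,ds = -\varepsilon^2 \Phi(X^\varepsilon_t)\,Z^\varepsilon_t + \varepsilon^2 \int_0^t D\Phi(X^\varepsilon_s)[\dot X^\varepsilon_s]\,Z^\varepsilon_s\,ds,
\]
valid for any smooth $\Phi$. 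Since $|\dot X^\varepsilon_s| \lesssim 1 + |\tilde Y^\varepsilon_s| + \varepsilon |\tilde Y^\varepsilon_s|^2$ from \eqref{eq:Xepsclim} and $\|\tilde Y^\varepsilon_s\|_{L^p}\lesssim \varepsilon^{-1}$ by \autoref{lem:ou}, this identity lets me replace $Z^\varepsilon$ by $G(X^\varepsilon)$ in the $\int A^1_2 Z^\varepsilon$ and $\int B^1_{12}(X^\varepsilon,Z^\varepsilon)$ contributions to \eqref{eq:Xepsclim1'}, incurring an $L^2$-error of order $\varepsilon$. The Lipschitz differences $G(X^\varepsilon) - G(\tilde X^\varepsilon)$ and $B^1_{12}(X^\varepsilon,G(X^\varepsilon)) - B^1_{12}(\tilde X^\varepsilon,G(\tilde X^\varepsilon))$ that remain are then absorbed into a Gronwall scheme.

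Third, I would dispose of the three quadratic terms in $B^1_{22}$ on the last line of \eqref{eq:Xepsclim1'}. The contribution $\int_0^t \varepsilon B^1_{22}(\tilde Y^\varepsilon_s,\tilde Y^\varepsilon_s)\,ds$ cancels exactly with its counterpart in \eqref{eq:XepsTilde}; the contribution $\int_0^t \varepsilon B^1_{22}(Z^\varepsilon_s,Z^\varepsilon_s)\,ds$ is trivially $O(\varepsilon T)$ after localization; and the mixed contribution $\int_0^t \varepsilon B^1_{22}(\tilde Y^\varepsilon_s,Z^\varepsilon_s)\,ds$ requires substituting $\tilde Y^\varepsilon_s\,ds = dW_s - \varepsilon^2\,d\tilde Y^\varepsilon_s$. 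This splits it into a stochastic integral $\varepsilon \int_0^t B^1_{22}(dW_s,Z^\varepsilon_s)$, which is $O(\varepsilon)$ by Burkholder-Davis-Gundy thanks to the boundedness of $Z^\varepsilon$, and a pathwise remainder $-\varepsilon^3 \int_0^t B^1_{22}(d\tilde Y^\varepsilon_s,Z^\varepsilon_s)$, which is handled by integration by parts combined with the ODE for $Z^\varepsilon$ and the a priori control of $\dot X^\varepsilon$.

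Collecting these estimates yields
\[
\mathbb{E}\sup_{s\le t\wedge\tau^\varepsilon_R}|X^\varepsilon_s - \tilde X^\varepsilon_s|^2 \lesssim r(\varepsilon) + K_R\int_0^t \mathbb{E}\sup_{s'\le s\wedge\tau^\varepsilon_R}|X^\varepsilon_{s'} - \tilde X^\varepsilon_{s'}|^2\,ds,
\]
with $r(\varepsilon)\to 0$ as $\varepsilon\downarrow 0$, so Gronwall's lemma closes the argument on the localized event, and $R\to\infty$ is removed exactly as in the final paragraphs of \autoref{sec:weakWZ}, using the global existence of $\bar X$ from \autoref{prop:wellpos}. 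The main obstacle is the mixed term $\int_0^t \varepsilon B^1_{22}(\tilde Y^\varepsilon_s,Z^\varepsilon_s)\,ds$: both factors are singular in $\varepsilon$, and only the interplay of the martingale decomposition of $\tilde Y^\varepsilon$ with the dissipative ODE structure of $Z^\varepsilon$ generates sufficient cancellation. This is exactly the mechanism that breaks down for the related coupling term $\varepsilon^{-1}B^2_{12}(X^\varepsilon,Y^\varepsilon)$ which the authors exclude from the outset.
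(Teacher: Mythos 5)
Your overall reduction coincides with the paper's: check (A1)--(A3) for the coefficients of \eqref{eq:XepsTilde}, get (A4) from the energy estimate of \autoref{prop:wellpos} and its analogues, identify (A5) with (C4), apply \autoref{thm:main} to $\tilde X^\varepsilon$, and then show $X^\varepsilon-\tilde X^\varepsilon\to 0$ in probability on a localized interval, removing $R$ as in the end of \autoref{sec:weakWZ}. Your handling of the replacement of $Z^\varepsilon$ by $G(X^\varepsilon)$ and of the three $B^1_{22}$-terms is plausible, and in fact more explicit than the paper, which simply re-runs the localization--discretization--Gronwall scheme of \autoref{sec:main} for the difference.

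There is, however, a genuine gap. Subtracting \eqref{eq:XepsTilde} from \eqref{eq:Xepsclim1'} produces, besides the terms you list, the term $\int_0^t B^1_{12}(X^\varepsilon_s-\tilde X^\varepsilon_s,\tilde Y^\varepsilon_s)\,ds$ coming from the multiplicative noise present in both equations. You never address it, and it is exactly the term that is incompatible with your final continuous-time Gronwall inequality with a constant $K_R$ independent of $\varepsilon$: since $\sup_{t\le T}|\tilde Y^\varepsilon_t|$ is of order $\varepsilon^{-1}$ in every $L^p$ (\autoref{lem:ou}), the naive bound gives a Gronwall coefficient of order $\varepsilon^{-1}$ (or $\varepsilon^{-2}$ after squaring), i.e.\ a bound of the form $r(\varepsilon)\,e^{C_R/\varepsilon}$, which is useless. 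This is a Wong--Zakai-type stability question for two equations driven by the \emph{same} large smooth noise, and one must exploit stochastic cancellations; the paper does so by re-running the discretization and martingale arguments of \autoref{sec:main} (freezing the integrand at grid points, converting $dW^\varepsilon$-integrals into genuine $dW$-integrals plus $\varepsilon^2 Y^\varepsilon$ boundary terms as in \eqref{eq:Weps} and the term $I^k_8$, and using Doob/Burkholder-Davis-Gundy for the resulting discrete martingales), which is precisely why its proof of \autoref{thm:climate} routes the difference estimate through ``the lines of the proof of \autoref{thm:main}'' rather than a plain Gronwall bound. Your substitution $\tilde Y^\varepsilon_s\,ds=dW_s-\varepsilon^2 d\tilde Y^\varepsilon_s$ could be applied to this term too, but it does not close trivially: after integrating by parts one is left with $\varepsilon^2\int_0^t B^1_{12}(\dot X^\varepsilon_s-\dot{\tilde X}^\varepsilon_s,\tilde Y^\varepsilon_s)\,ds$, which contains a contribution of size $\sup_{s\le t}|X^\varepsilon_s-\tilde X^\varepsilon_s|$ with an $O(1)$, not small, constant, so a further iteration or a martingale argument is still required. (A minor point: in your third step the mixed term is not ``singular in both factors''---$Z^\varepsilon$ is bounded by $\sup_{r}|G(X^\varepsilon_r)|$ on the localized event---and your one-line treatment of the remainder $-\varepsilon^3\int_0^t B^1_{22}(d\tilde Y^\varepsilon_s,Z^\varepsilon_s)$ hides a similar recursion, which does close but should be spelled out.)
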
 
\begin{proof}
Recall the process $\tilde{X}^\varepsilon$ satisfying \eqref{eq:XepsTilde},
which is an equation of type \eqref{eq:introabsX} with coefficients $F,\sigma,\beta$ satisfying (A1)-(A3). 
Furthermore, by \autoref{prop:wellpos} and \autoref{allFine}, condition (A4) is satisfied, too,
while (A5) and (C4) actually are the same condition.

All in all, \autoref{thm:main} implies that both parts (i) \&\ (ii) of \autoref{thm:climate} hold true
when replacing ${X}^\varepsilon$ by $\tilde{X}^\varepsilon$.
 
Thus, it is sufficient to prove convergence in probability of $X^\varepsilon - \tilde{X}^\varepsilon$ to zero, 
$\varepsilon \downarrow 0$, uniformly on compact subsets of a localising stochastic interval,
which can easily be shown following the lines of proof of \autoref{thm:main}. 

Indeed, by localization and discretization arguments, one would first derive
\[
\E \left[ \sup_{\substack{k'=0,\dots,h\\k'\!\Delta \leq \tau_R^\varepsilon}}
\left| X^\varepsilon_{k'\Delta} - \tilde{X}^\varepsilon_{k'\Delta} \right|^2 \right] 
\lesssim \;r(\Delta,\varepsilon) +
\sum_{k=0}^{h-1} 
\Delta \E \left[ \sup_{\substack{k'=0,\dots,k\\k'\!\Delta \leq \tau_R^\varepsilon}}
\left| X^\varepsilon_{k'\Delta} - \tilde{X}^\varepsilon_{k'\Delta} \right|^2 \right],
\quad h=1,\dots,[T/\Delta],
\]
where 
$\tau^\varepsilon_R = \inf\{t \geq 0: |X^\varepsilon_t| \geq R \} \wedge \inf\{t \geq 0: |\tilde{X}^\varepsilon_t| \geq R \}$,
and $r(\Delta,\varepsilon) \to 0,\,\varepsilon \downarrow 0$, 
for a suitable choice of $\Delta=\Delta_\varepsilon$.
Then, combining Gronwall's lemma and Markov's inequality, one would obtain
\begin{equation*}
\lim_{\varepsilon \to 0} \PP\ag \sup_{t \leq T \wedge \tau^\varepsilon_R }\|X^\varepsilon_t - \tilde{X}^\varepsilon_t \|_{H_d} > \delta\cg = 0,\quad\forall\,\delta>0,
\end{equation*} 
which yields the convergences stated in parts (i) and (ii) of \autoref{thm:climate} up to time $\tau^\varepsilon_R$.
Since $\bar{X}$ is globally defined, both types of convergence can be extended to the whole interval $[0,T]$,
using similar arguments given in the proof of the corresponding parts of \autoref{thm:main}.
\end{proof}

\newpage

\end{document}